\newtheorem{thm}{Theorem}[section]
 \newtheorem{cor}[thm]{Corollary}
 \newtheorem{lem}[thm]{Lemma}
 \newtheorem{prop}[thm]{Proposition}
\newtheorem{introthm}{Theorem}
\newtheorem{introcor}[introthm]{Corollary}
\numberwithin{equation}{section}
 \theoremstyle{definition}
  \newtheorem{defn}[thm]{Definition}
  \newtheorem{question}[thm]{Question}
  \newtheorem{problem}[thm]{Problem}
 \theoremstyle{remark}
 \newtheorem{rem}[thm]{Remark}
\newtheorem*{claim*}{Claim}
\newcommand{\cstr}{\mathrm{C}^*}
\def\B{\mathfrak B}
\def\K{\mathfrak K}
\def\H{\mathcal H}
\def\G{\mathcal G}
\def\supp{\mathrm{supp}}
\def\diam{\mathrm{diam}}
\def\ppg{\mathrm{ppg}}
\def\N{\mathbb N}
\def\C{\mathbb C}
\def\Nd{\mathcal N}
\begin{document}

\title{Measured asymptotic expanders and Rigidity for Roe algebras}

\author{Kang Li, J\'{a}n \v{S}pakula, and Jiawen Zhang}

\address[K. Li]{Department of Mathematics, Friedrich-Alexander-University Erlangen–Nuremberg, Cauerstrasse 11, 91058 Erlangen, Germany.}
\email{kang.li@fau.de}

\address[J. \v{S}pakula]{School of Mathematics, University of Southampton, Highfield, SO17 1BJ, United Kingdom.}
\email{jan.spakula@soton.ac.uk}

\address[J. Zhang]{School of Mathematical Sciences, Fudan University, 220 Handan Road, Shanghai 200433, P. R. China.}
\email{jiawenzhang@fudan.edu.cn}

\date{}

\thanks{KL was supported by the Internal KU Leuven BOF project C14/19/088. KL has also received funding from the European Research Council (ERC) under the European Union's Horizon 2020 research and innovation programme (grant agreement no. 677120-INDEX)}
\thanks{J\v{S} was partially supported by Marie Curie FP7-PEOPLE-2013-CIG Coarse Analysis (631945), and by EPSRC EP/V002899/1. For the purpose of open access, the author has applied a Creative Commons Attribution (CC BY) licence to any Author Accepted Manuscript version arising.}
\thanks{JZ was supported by NSFC11871342}


\begin{abstract}
In this paper, we give a new geometric condition in terms of measured asymptotic expanders to ensure rigidity of Roe algebras.
Consequently, we obtain the rigidity for all bounded geometry spaces which coarsely embed into some $L^p$-space for $p\in [1,\infty)$. Moreover, we also verify rigidity for the box spaces constructed by Arzhantseva-Tessera and Delabie-Khukhro even though they do \emph{not} coarsely embed into any $L^p$-space.

The key step in our proof of rigidity is showing that a block-rank-one (ghost) projection on a sparse space $X$ belongs to the Roe algebra $C^*(X)$ if and only if $X$ consists of (ghostly) measured asymptotic expanders. As a by-product, we also deduce that ghostly measured asymptotic expanders are new sources of counterexamples to the coarse Baum-Connes conjecture.  
\end{abstract}

\date{\today}
\maketitle

\parskip 4pt

\noindent\textit{Mathematics Subject Classification} (2020): 46H35, 20F65, 05C81. Secondary: 19K56.\\
\textit{Keywords:} Measured asymptotic expanders, Measured weak embeddings, Rigidity of Roe algebras, Coarse Baum-Connes conjecture, Quasi-locality.\\
\section{Introduction}\label{sec:introduction}
(Uniform) Roe algebras are $C^*$-algebras associated to metric spaces, which reflect and encode the coarse (or large-scale) geometry of the underlying metric spaces. They have been well-studied and have fruitful applications, among which the most important ones would be the coarse Baum-Connes conjecture, the Novikov conjecture on higher signatures and the
Gromov–Lawson conjecture on positive scalar curvature (see e.g. \cite{WY20} for details).

The main purpose of this paper is to provide a new geometric condition in terms of measured asymptotic expanders to guarantee the rigidity for (uniform) Roe algebras associated to metric spaces. In recent years, the study of the rigidity for those $C^*$-algebras has gained considerable attention (for instance \cite{rigidity_CBC,BragaFarah19,BragaFarahVignati2018,braga2019embeddings,BragaVignati2019,CL18,vspakula2013rigidity,WW18}). Let us recall the general setting: let $(X,d)$ be a metric space with bounded geometry, and  denote by $C_u^*(X)$ the uniform Roe algebra of $X$, defined to be the norm closure of all bounded operators on $\ell^2(X)$ with finite propagation. Similarly the Roe algebra of $X$, denoted by $C^*(X)$, is defined to be the norm closure of all locally compact bounded operators on $\ell^2(X;\H_0)$ with finite propagation, where $\H_0$ is an infinite-dimensional separable Hilbert space. It is well-known that if $X$ and $Y$ are coarsely equivalent metric spaces with bounded geometry, then their (uniform) Roe algebras are (stably) $\ast$-isomorphic. The rigidity problem refers to the converse implication:

\begin{problem}[Rigidity Problem]\label{ProblemRigidity}
Let $X$ and $Y$ be two metric spaces with bounded geometry.
\begin{enumerate}
\item\label{ProblemRigidityUnifRoeAlg}If $C_u^*(X)$ and $C_u^*(Y)$ are stably $*$-isomorphic, are $X$ and $Y$ coarsely equivalent?
\item\label{ProblemRigidityRoeAlg} If $C^*(X)$ and $C^*(Y)$ are $*$-isomorphic, are $X$ and $Y$ coarsely equivalent?
\end{enumerate}
\end{problem}

To briefly summarise the current status of Rigidity Problem (at the time of submission of the present paper\footnote{About half a year after this paper was first announced, an unconditional positive answer
to the rigidity problem for \emph{uniform} Roe algebras is given in \cite{BBFKVW}. However, the method in \cite{BBFKVW} does not immediately apply to Roe algebras. To the best of the authors' knowledge, the rigidity problem for Roe algebras is still open in general.}), and some of the main results of this paper, let us consider the following properties of a metric space with bounded geometry:
\begin{itemize}
  \item[(A)] Property (A).
  \item[(B)] Coarse embeddability into a Hilbert space.
  \item[(C)] Coarse embeddability into some $L^p$-space for $p\in[1,\infty)$.
  \item[(D)] ``All sparse subspaces yield only compact ghost projections in their (uniform) Roe algebras.'' (This technical condition in the uniform case was first introduced in \cite{BragaFarah19,BragaFarahVignati2018, braga2019embeddings}; the general case in \cite{rigidity_CBC}.)
  \item[(E)] The space contains no sparse subspaces consisting of ghostly measured asymptotic expanders. (This geometric condition is introduced in the present paper, see below for precise definitions.)
  \item[(F)] A positive answer to Rigidity Problem \ref{ProblemRigidity}.
\end{itemize}
All of (A) -- (E) imply (F): The implication (A)$\Rightarrow$(F) is the main result in \cite{vspakula2013rigidity} (see Theorem 1.4 and 1.8 therein) where the Rigidity Problem was first posed. The next substantial progress was then made with the help of (D) by Braga and Farah in \cite[Corollary 1.3]{BragaFarah19} (for $*$-isomorphic uniform Roe algebras), but the full generality of (D)$\Rightarrow$(F) was given in \cite[Theorem~1.3]{rigidity_CBC}. One of the main results of this paper is to establish (E)$\Rightarrow$(F) (see Theorem \ref{introthm:rigidity.geometric.condition} below).
One has the following implications:
$$
\xymatrix{
  & & \text{(C)} \ar@{=>}[r] & \text{(E)} \ar@{=>}[r]  &\text{(F)}\\
  \text{(A)} \ar@{=>}[r] & \text{(B)} \ar@{=>}[r] \ar@{=>}[ur] & \text{(D)} \ar@{=>}[ur]
}
$$

Here (A)$\Rightarrow$(B) is due to Yu \cite[Theorem 2.2]{Yu:CBC-for-HSp:00}. {\color{red}{Also (B)$\Rightarrow$(C) holds trivially{\footnote{We want to recall that condition (B) also implies the coarse embeddability into \emph{any} $L^p$-space for $p\in[1,\infty)$, which was proved in \cite{Nowak:lp-embeddings:06}.}}, however, whether (C) implies (B) is still open for bounded geometry metric spaces.}} Next, (B)$\Rightarrow$(D) is essentially due to Finn-Sell \cite[Proposition 35]{MR3266245}, made precise in \cite[Theorem 5.2]{rigidity_CBC}. The remaining implications (C)$\Rightarrow$(E), (D)$\Rightarrow$(E) and (E)$\Rightarrow$(F) are all established in this paper (see Corollary~\ref{cor: non CE}, Corollary~\ref{cor C} and Theorem \ref{introthm:rigidity.geometric.condition} respectively).

Some of the above implications are known to be \emph{not} reversible: (B)$\not\Rightarrow$(A) by \cite[Theorem 1.1]{AGS:not-A-but-CE:12}, (D)$\not\Rightarrow$(B) by \cite[Proposition~35]{MR3266245} and \cite[Corollary~1.2]{DWY21}. Furthermore, (E)$\not\Rightarrow$(C) by the results in this paper (see Theorem \ref{examples} and Lemma~\ref{lem:measured.weak.embedding} below). Finally, we do not know whether (D)$\Rightarrow$(E) is reversible, and thus whether our new rigidity results here are actually (rather than formally) stronger than those in \cite{rigidity_CBC}. Nevertheless, the condition (E) is geometric and easier to check than the analytic condition (D), which allows us to provide new examples of rigid spaces in Section \ref{sec:examples}, and to establish the condition (C) as another geometric criterion for Rigidity (see Corollary~\ref{introcor:lp}).

To explain our results and approach in more detail, we start with measured asymptotic expanders. They were introduced in \cite[Definition~6.1]{dypartI} and can be naturally constructed from strongly ergodic or asymptotically expanding actions (see \cite[Theorem~6.16]{dypartI}).
\begin{defn}
A sequence of finite probability-measured metric spaces $\{(X_n,d_n,m_n)\}_{n \in \N}$ is
called a sequence of \emph{measured asymptotic expanders} if for any
$\alpha \in (0,\frac{1}{2}]$, there exist $c_\alpha\in (0,1)$ and $R_\alpha>0$
such that for any $n \in \N$ and $A \subseteq X_n$ with
$\alpha \leq m_n(A) \leq \frac{1}{2}$, we have
$m_n(\partial_{R_\alpha} A) > c_\alpha\cdot m_n(A)$, where $\partial_{R_\alpha} A:=\{x\in X_n \backslash A:d_n(x,A)\leq R_\alpha\}$.
\end{defn}

Note that when all probability measures $m_n$ are taken to be normalised counting measures, then we recover the notion of asymptotic expanders introduced in \cite{Intro}.

\begin{defn}\label{DefiMainGeomProp}
Let $(X,d)$ be a metric space. 
\begin{enumerate}
\item $X$ is called \emph{sparse}\footnote{In this case, we also say that $X$ is a \emph{coarse disjoint union} of $X_n$.} if there exists a disjoint partition $X=\bigsqcup_{n\in \N}X_n$ such that
\begin{itemize}
\item $0<|X_n|<\infty$ for all $n\in\N$, and
\item $d(X_n,X_m)\to \infty$ as $n+m\to \infty$ and $n\neq m$.
\end{itemize}
\item We say that a sparse space $X$ \emph{consists of ghostly measured asymptotic expanders} if there exist a decomposition $X=\bigsqcup_nX_n$ as in (1) and a sequence of probability measures $m_n$ on $X_n$ with $\lim_{n\to \infty}\sup_{x\in X_n}m_n(x)=0$ such that $\{(X_n,d_n,m_n)\}_{n \in \N}$ forms a sequence of measured asymptotic expanders, where $d_n$ is the restriction of $d$ to $X_n$.
\end{enumerate}
\end{defn}

We are ready to state our main rigidity theorem:
\begin{introthm}[Theorem \ref{thm:rigidity.geometric.condition}]\label{introthm:rigidity.geometric.condition}
Let $X$ and $Y$ be metric spaces with bounded geometry. Assume that either $X$ or $Y$ contains no sparse subspaces consisting of ghostly measured asymptotic expanders. Then the following are equivalent:
\begin{enumerate}
  \item $X$ is coarsely equivalent to $Y$;
  \item $C^*_u(X)$ is stably $\ast$-isomorphic to $C^*_u(Y)$;
  \item $C^*(X)$ is $\ast$-isomorphic to $C^*(Y)$.
\end{enumerate}
\end{introthm}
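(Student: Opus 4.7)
Since $X$ has bounded geometry, one has $C^*(X) \cong C^*_u(X) \otimes \mathcal{K}$ (and similarly for $Y$), so conditions $(2)$ and $(3)$ are automatically equivalent. The implication $(1) \Rightarrow (3)$ is classical: a coarse equivalence induces a (stable) $\ast$-isomorphism of the (uniform) Roe algebras. Hence the substantive content is the single implication $(3) \Rightarrow (1)$ under the stated geometric hypothesis.

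My plan is to deduce $(3) \Rightarrow (1)$ from the Braga-Chung-Li rigidity theorem \cite[Theorem~1.3]{rigidity_CBC}, which proves exactly this implication under the \emph{technical condition} of Definition~\ref{technical consdition}. The task therefore reduces to verifying that if $X$ contains no sparse subspaces consisting of ghostly measured asymptotic expanders, then $X$ satisfies that technical condition. The bridge between the geometric hypothesis and the operator-algebraic one is the key structural result announced in the abstract: a block-rank-one ghost projection on a sparse subspace $Z \subseteq X$ belongs to $C^*(X)$ if and only if $Z$ consists of ghostly measured asymptotic expanders.

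I would argue the reduction by contrapositive. Failure of the technical condition produces, via the machinery already developed in the rigidity literature \cite{BragaFarah19,braga2019embeddings,rigidity_CBC}, a non-compact ghost projection $P \in C^*(X)$ with sufficient ``concentration at infinity''. A diagonal/tail subsequence argument then extracts a sparse subspace $Z = \bigsqcup_n X_n \subseteq X$ together with unit vectors $\xi_n \in \ell^2(X_n; \H_0)$ satisfying $P\xi_n \approx \xi_n$. The block-rank-one projection $Q = \sum_n |\xi_n\rangle\langle\xi_n|$ then arises as a compression of $P$ and hence lies in $C^*(Z) \subseteq C^*(X)$; the probability measures $m_n(x) := \|\xi_n(x)\|_{\H_0}^2$ witness, through the key bridge, that $Z$ consists of ghostly measured asymptotic expanders, contradicting the hypothesis on $X$.

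The main obstacle I anticipate is making the compression step rigorous: starting from an arbitrary ghost projection in $C^*(X)$, one must produce a sparse subspace $Z$ and unit vectors $\xi_n$ whose associated block-rank-one projection genuinely belongs to $C^*(Z)$ and is itself a ghost. It is precisely here that the \emph{measured} formulation — allowing arbitrary probability measures $m_n$, not only counting measures — becomes essential, as it affords the flexibility to accommodate whatever concentration pattern the vectors $\xi_n$ happen to exhibit, whereas the classical counting-measure notion of asymptotic expander would be too rigid to complete this extraction. Once this reduction is achieved, the theorem follows by directly invoking \cite[Theorem~1.3]{rigidity_CBC}.
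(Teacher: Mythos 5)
There are two genuine gaps here, and the second one is fatal to the overall strategy.

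First, a smaller issue: your opening claim that $C^*(X)\cong C^*_u(X)\otimes\mathcal{K}$, so that (2) and (3) are ``automatically equivalent'', is incorrect. What is true is $C^*_u(X)\otimes\K(\H_0)\cong C^*_s(X)$, and one only has proper inclusions $C^*_s(X)\subseteq UC^*(X)\subseteq C^*(X)$ in general: the Roe algebra allows the entries $T_{x,y}$ to be arbitrary compact operators with no uniform finite-dimensionality, so it is strictly larger than the stabilised uniform Roe algebra. The equivalence of (2) and (3) is part of the substantive content of the theorem and is obtained by running the rigidity machinery for each algebra separately (as in Proposition~\ref{prop:rigidity.Roe}), not by identifying the algebras.

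Second, and more seriously, your reduction to \cite[Theorem~1.3]{rigidity_CBC} goes in the wrong direction. You propose to show that the geometric hypothesis (no sparse subspaces consisting of ghostly measured asymptotic expanders) implies the technical condition of Definition~\ref{technical consdition} (all ghost projections in Roe algebras of sparse subspaces are \emph{compact}). But by Corollary~\ref{cor:geometric condition characterisation} the geometric hypothesis is equivalent to excluding only \emph{block-rank-one} ghost projections, and this is a strictly weaker requirement than excluding all non-compact ghost projections --- indeed the whole point of the paper is that its rigidity hypothesis is weaker than the one in \cite{rigidity_CBC}. The ``compression step'' you flag as the main obstacle is exactly where this breaks: starting from an arbitrary non-compact ghost projection $P\in C^*(X')$ there is no known way to extract unit vectors $\xi_n$ supported on a sparse subspace whose associated block-rank-one projection again lies in the Roe algebra and is a ghost (a non-compact ghost projection could, for instance, have blocks of rank $2$ or of growing rank; the block-rank-$n$ case for $n\geq 2$ is explicitly left open in the paper). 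If such an extraction were always possible, the two conditions would coincide, contrary to the paper's claim of strictness. The paper's actual route is the opposite: it does \emph{not} verify the hypothesis of \cite[Theorem~1.3]{rigidity_CBC}, but instead re-proves the rigidity machinery under the weaker hypothesis. The key observation (Lemma~\ref{LemmaRankProjInBDelta}) is that the ghost projections which actually arise in the argument of \cite{rigidity_CBC} can be arranged to be block-rank-one --- the approximating projections $q_n$ there are obtained by functional calculus from rank-one operators, hence are themselves rank-one --- so only block-rank-one ghost projections ever need to be excluded. This yields Proposition~\ref{prop:rigidity.Roe}, and the theorem then follows by translating the analytic hypothesis into the geometric one via Corollary~\ref{cor:geometric condition characterisation}, which in turn rests on the fundamental Theorem~\ref{thm: main result}. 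Your intuition that the measured formulation is what makes the bridge between block-rank-one ghost projections and expansion work is correct, but it is deployed in the characterisation step, not in a reduction to the compact-ghost condition.
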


Before we discuss examples where the above theorem applies, let us outline our approach to its proof.
The first step is to show that Problem~\ref{ProblemRigidity} has a positive answer under the condition that all sparse subspaces contain no block-rank-one ghost projections in their Roe algebras (see Proposition~\ref{prop:rigidity.Roe})\footnote{We would like to point out that our assumptions in Proposition~\ref{prop:rigidity.Roe} are formally weaker than the technical condition in \cite[Theorem~1.3]{rigidity_CBC}. } in the following sense:

\begin{defn}\label{defn:block rank one projection}
Let $(X,d)=\bigsqcup_{n\in\N}(X_n,d_n)$ be a sparse space and $\H_0$ be a Hilbert space. We say that an operator $P \in \B(\ell^2(X;\H_0))$ is a \emph{block-rank-one projection} with respect to $\{(X_n,d_n)\}_{n\in \N}$ if it is a projection of the form 
$$P:=\bigoplus_{n\in \N} P_n \in \B \left(\bigoplus_{n\in \N}\ell^2(X_n;\H_0)\right)=\B(\ell^2(X;\H_0)),$$
where each $P_n$ is a rank-one projection in $\B(\ell^2(X_n;\H_0))$.

As each $P_n$ has the form $P_n(\eta)=\langle \eta, \xi_n \rangle \xi_n$ for all $\eta \in \ell^2(X_n;\H_0)$ where $\xi_n$ is a unit vector in $\ell^2(X_n;\H_0)$, we define for each $n\in \N$ the \emph{associated probability measure} $m_n$ on $X_n$ by $m_n(x):=||\xi_n(x)||^2$ for $x\in X_n$.

Finally, we say that the block-rank-one projection $P$ is a \emph{ghost}\footnote{See Definition~\ref{defn:ghost.operator} for general ghost operators.} if the sequence of associated probability measures satisfies
$\lim_{n\to \infty}\sup_{x\in X_n}m_n(x)=0$.
\end{defn}

If each $P_n$ is the rank-one projection onto constant functions on $X_n$, then the resulting projection $P$ is the so-called \emph{averaging projection}. Its relation to asymptotic expanders has been extensively studied in \cite{structure,Intro}. The reader is cautioned that the notion of block-rank-one projections depends on the choice of the decomposition of $X$ as a coarse disjoint union of finite metric spaces $X=\bigsqcup_{n\in\N}X_n$, not only on the bijectively coarse equivalence type of $X$ (see \cite[Remark~3.5]{Intro} for details).

The second step is to show that a block-rank-one (ghost) projection $P$ on a sparse space $\bigsqcup_{n\in\N}(X_n,d_n)$ belongs to the Roe algebra $C^*(X)$ if and only if $\{(X_n,d_n,m_n)\}_{n \in \N}$ forms a (ghostly) sequence of measured asymptotic expanders, where each $m_n$ denotes the associated probability measure on $X_n$. To this end, we prove the following theorem, which extends \cite[Theorem~6.1]{structure} from asymptotic expanders to the measured case:

\begin{introthm}[Proposition \ref{prop: measured asymptotic expanders}, Theorem~\ref{thm:main result uniform case} and Theorem \ref{thm:main result Roe}]\label{thm: main result}
Let $(X,d)=\bigsqcup_{n\in\N}(X_n,d_n)$ be a sparse space with bounded geometry and $\H_0$ be a Hilbert space. Let $P \in \B(\ell^2(X;\H_0))$ be a block-rank-one projection with respect to $\{(X_n,d_n)\}_{n\in \N}$ and $m_n$ be the associated probability measure on $X_n$. Then the following hold:
\begin{itemize}
\item If $\H_0=\ell^2(\N)$, then $P$ belongs to the Roe algebra $C^*(X)$ \emph{if and only if} $\{(X_n,d_n,m_n)\}_{n \in \N}$ forms a sequence of measured asymptotic expanders.
\item If $\H_0=\C$, then $P$ belongs to the uniform Roe algebra $C_u^*(X)$ \emph{if and only if} $\{(X_n,d_n,m_n)\}_{n \in \N}$ forms a sequence of measured asymptotic expanders.
\end{itemize}
\end{introthm}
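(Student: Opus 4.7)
The plan is to prove both directions of the equivalence; both cases of the theorem ($\H_0=\C$ and $\H_0=\ell^2(\N)$) reduce to a common scalar problem. Writing $P_n = \xi_n \xi_n^*$ with a unit vector $\xi_n \in \ell^2(X_n;\H_0)$ and factoring $\xi_n(x) = \sqrt{m_n(x)}\,u_n(x)$ for unit vectors $u_n(x) \in \H_0$, conjugation by the diagonal (hence propagation-zero) unitary sending each $u_n(x)$ to a fixed $e_0 \in \H_0$ converts $P_n$ into the form $Q_n \otimes |e_0\rangle\langle e_0|$, where $Q_n$ is the scalar rank-one projection onto $\sqrt{m_n} \in \ell^2(X_n)$. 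Since $|e_0\rangle\langle e_0|$ is compact, the local-compactness requirement of $C^*(X)$ is automatic, and the whole approximation problem reduces to approximating $Q_n$ by finite-propagation operators on $\ell^2(X_n)$, uniformly in $n$.

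For necessity $(\Rightarrow)$, fix $\alpha \in (0,1/2]$ and pick $\epsilon > 0$ with $\epsilon^2 < \alpha/4$. Choose $T$ of propagation $R$ with $\|P-T\| < \epsilon$. For $A \subseteq X_n$ with $\alpha \le m_n(A) \le 1/2$, set $B := \{x \in X_n \setminus A : d(x,A) > R\}$, so $d(A,B) > R$, $\chi_B T \chi_A = 0$, and hence $\|\chi_B P \chi_A\| \le \epsilon$. Feeding the test vector $\chi_A \xi_n$ (of norm $\sqrt{m_n(A)}$) into this inequality, and using $P\chi_A \xi_n = m_n(A)\xi_n$, yields $m_n(A)\sqrt{m_n(B)} \le \epsilon\sqrt{m_n(A)}$, so $m_n(B) \le \epsilon^2/\alpha$. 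Hence $m_n(\partial_R A) = 1 - m_n(A) - m_n(B) \ge 1/2 - \epsilon^2/\alpha \ge 1/4 \ge \tfrac{1}{2} m_n(A)$, verifying the measured expander condition with $c_\alpha = 1/2$ and $R_\alpha = R$.

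For sufficiency $(\Leftarrow)$, I first upgrade the measured expander condition to quasi-locality of $P$, i.e., that $\|\chi_A P \chi_B\| = \sqrt{m_n(A)m_n(B)}$ is small whenever $d(A,B)$ is large. Given $\epsilon > 0$, set $\alpha := \epsilon^2$ with associated constants $c_\alpha, R_\alpha$. For $A \subseteq X_n$ with $m_n(A) \ge \alpha$, iterate $A_i := N_{iR_\alpha}(A)$: while $m_n(A_i) \le 1/2$, the expander condition gives $m_n(A_{i+1}) \ge (1+c_\alpha) m_n(A_i)$, so after $k$ steps with $(1+c_\alpha)^k \alpha > 1/2$ (i.e., $k = O(\log(1/\epsilon)/\log(1+c_\alpha))$) we have $m_n(A_k) > 1/2$. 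If both $m_n(A), m_n(B) \ge \alpha$ and $d(A,B) > 2kR_\alpha$, then $A_k \cap B_k = \emptyset$ forces $m_n(A_k) + m_n(B_k) \le 1$, contradicting $m_n(A_k), m_n(B_k) > 1/2$. Hence $d(A,B) > 2kR_\alpha$ implies $m_n(A)m_n(B) < \epsilon^2$, so $\|\chi_A P \chi_B\| < \epsilon$.

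The main technical step and the expected obstacle is to pass from quasi-locality to finite-propagation approximation of $Q_n$. The approach I would follow is spectral: construct, for each $n$, a self-adjoint contraction $M_n$ on $\ell^2(X_n)$ with propagation $\le R_0$ (fixed), having $\sqrt{m_n}$ as top eigenvector and a spectral gap $1-\lambda > 0$ uniform in $n$ extracted from the measured expander condition. Then $\|Q_n - M_n^k\| \le \lambda^k$ and $M_n^k$ has propagation $\le kR_0$, so choosing $k = O(\log(1/\epsilon))$ yields the required uniform approximants. A natural candidate for $M_n$ is the symmetrization of the reversible random walk on $(X_n, d_n, m_n)$ with $R_0$-step transitions weighted by $m_n$. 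The main difficulty is establishing the uniform spectral gap from the measured expander condition: the naive averaging operator $f \mapsto m_n(B(\cdot, R_0))^{-1}\sum_{y\in B(\cdot, R_0)} m_n(y)f(y)$ is not self-adjoint because $m_n(B(x, R_0))$ varies with $x$, so a careful symmetric reversible construction is needed, together with a weighted Cheeger-type inequality deduced from the measured expander condition---this step adapts the argument of \cite{structure} from the counting-measure case of asymptotic expanders to the general measured setting.
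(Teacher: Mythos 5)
Your reduction of both cases to the scalar projection $Q_n$ onto $\sqrt{m_n}$ via a propagation-zero unitary, your necessity argument, and your iteration deriving quasi-locality of $P$ from the measured asymptotic expander condition are all correct and essentially coincide with the paper's route (Lemma~\ref{lem: quasi-locality of the projection}, Proposition~\ref{prop: measured asymptotic expanders}, and the uniformization Lemmas~\ref{lem:equiv of QL} and~\ref{lem:equiv of Roe}); the only point to add there is that one must first cut down to $\supp m_n$ so that the unit vectors $u_n(x)$ are defined, as in Lemma~\ref{lem: support Roe}.

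The genuine gap is in the sufficiency step. You propose, for each $n$, a single self-adjoint contraction $M_n$ of \emph{fixed} propagation $R_0$ on all of $\ell^2(X_n)$ with $\sqrt{m_n}$ as top eigenvector and a spectral gap uniform in $n$, extracted directly from the expansion hypothesis. No such operator exists in general: the measured \emph{asymptotic} expander condition only constrains sets $A$ with $m_n(A)\geq\alpha$, and it is perfectly compatible with, say, a point $x_n\in X_n$ of measure $1/n$ sitting at distance greater than $R_0$ from the rest of $X_n$. Any propagation-$R_0$ Markov operator then fixes $\delta_{x_n}$, so the eigenvalue $1$ has multiplicity at least two, the top eigenspace is not spanned by $\sqrt{m_n}$, and $M_n^k$ converges to a rank-two projection rather than to $Q_n$; thus the approximants' propagation must be allowed to grow, and no fixed-scale Cheeger inequality can produce the gap you need. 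This is exactly why the paper does not work on $X_n$ itself but on an exhaustion: the structure theorem (Theorem~\ref{thm:structure thm}, Corollary~\ref{cor:MAE to ME}) produces subsets $Y_{n,k}\subseteq X_n$ with $m_n(Y_{n,k})\geq(1-\alpha_k)\,m_n(X_n)$ carrying genuine measured \emph{expander} graph structures at scale $R_k$, and moreover with $s_k$-comparable measures on adjacent vertices (Lemma~\ref{lem: bounded ratio}); only with that bounded-ratio condition does the auxiliary reversible random walk of Lemma~\ref{lem:measured expanders to reversible ones} have a stationary measure uniformly equivalent to $m_n$, hence a spectral gap (Proposition~\ref{prop: spetral gap for measured expanders}), and the resulting spectral projections converge to $P$ only in the limit $k\to\infty$, with propagation $R_k\to\infty$. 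You correctly flag the non-self-adjointness of the naive averaging operator, but that is the smaller of the two obstacles; the missing idea is the exhaustion by genuine measured expanders with bounded measure ratios, without which there is no spectral gap to exploit.
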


Theorem~\ref{thm: main result} is the fundamental theorem of this paper, and its proof is rather technical. To briefly summarise, we start with a structure theorem inspired by \cite[Theorem 3.5]{structure} that measured asymptotic expanders admit a uniform exhaustion by measured expander graphs with bounded measure ratios (see Corollary~\ref{cor:MAE to ME})\footnote{A dynamical counterpart of structure theorem can be found in \cite{dypartII,dypartI}.}. Next, we show that certain Laplacians associated to measured expanders with bounded measure ratios have spectral gap (see Proposition~\ref{prop: spetral gap for measured expanders}). Unfortunately, the measures involved in measured expanders may not come from reversible random walks on graphs, hence we cannot directly follow the classical argument from the theory of expanders to obtain the required spectral gap. We overcome this issue by building auxiliary random walks, whose stationary measures uniformly control the original measures. Such control is guaranteed when we deal with measured expanders with bounded measure ratios. We refer the reader to Lemma~\ref{lem:measured expanders to reversible ones} for the precise statement.

As outlined above, Theorem~\ref{introthm:rigidity.geometric.condition} can be deduced from our fundamental theorem (Theorem~\ref{thm: main result}). Additionally, Theorem~\ref{thm: main result} has two further substantial consequences. The first one is to provide both analytic and geometric characterisations for our rigidity condition. Namely, we have the following corollary (see also \cite[Problem~8.6]{braga2019embeddings}):

\begin{introcor}[Corollary~\ref{cor:geometric condition characterisation}]\label{cor C}
Let $X$ be a metric space with bounded geometry. Then the following coarse properties are equivalent: 
\begin{enumerate}
\item all sparse subspaces of $X$ contain no block-rank-one ghost projections in their Roe algebras;
\item all sparse subspaces of $X$ contain no block-rank-one ghost projections in their uniform Roe algebras;
\item $X$ contains no sparse subspaces consisting of ghostly measured asymptotic expanders.
\item $X$ coarsely contains no sparse spaces consisting of ghostly measured asymptotic expanders with uniformly bounded geometry.
\end{enumerate}
\end{introcor}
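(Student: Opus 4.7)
The plan is to split the four-fold equivalence into two independent pieces: the equivalence of (1), (2) and (3) is a direct application of Theorem~\ref{thm: main result}, while the equivalence of (3) and (4) is a coarse-invariance statement proved by pushing forward measures through a coarse embedding.

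For the equivalence of (1), (2) and (3), I would fix a sparse subspace $Y=\bigsqcup_{n}Y_n$ of $X$ and apply Theorem~\ref{thm: main result} in both its $\mathcal{H}_0=\C$ and $\mathcal{H}_0=\ell^2(\N)$ forms. Any block-rank-one projection $P=\bigoplus_n P_n$ on $\ell^2(Y;\mathcal{H}_0)$ yields associated probability measures $m_n(x):=\|P_n\delta_x\|^2$, and the theorem says that $(Y_n,d_n,m_n)_{n\in\N}$ is a sequence of measured asymptotic expanders if and only if $P\in C^*(Y)$ (respectively, $C^*_u(Y)$). Conversely, a ghostly measured asymptotic expander structure on $Y$ produces the corresponding block-rank-one ghost projection in the respective algebra. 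Since the ghost condition on $P$ (namely $\lim_{n\to\infty}\sup_{x\in Y_n}m_n(x)=0$) coincides verbatim with the ghostly condition on the sequence, the three conditions are pairwise equivalent.

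The implication (4)$\Rightarrow$(3) is tautological: a sparse subspace is in particular a coarsely contained sparse space, and it inherits uniformly bounded geometry from the ambient $X$. For (3)$\Rightarrow$(4), I would take a coarse embedding $f\colon Y\to X$ of a sparse space $Y=\bigsqcup_n Y_n$ with uniformly bounded geometry which consists of ghostly measured asymptotic expanders with probability measures $m_n$, with coarse control functions $\rho_\pm$, and push forward the measures by setting $Z_n:=f(Y_n)$ and $\tilde m_n(x):=m_n(f^{-1}(x)\cap Y_n)$ for $x\in Z_n$. Sparsity of $Z:=\bigsqcup_n Z_n\subseteq X$ follows from $d_X(Z_n,Z_m)\geq \rho_-(d_Y(Y_n,Y_m))\to\infty$. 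To verify the expansion condition, for $A\subseteq Z_n$ with $\alpha\leq \tilde m_n(A)\leq\tfrac{1}{2}$ I would set $A':=f^{-1}(A)\cap Y_n$, note $m_n(A')=\tilde m_n(A)$, apply the measured asymptotic expander property of $Y$ to produce $R'(\alpha),c'(\alpha)$ with $m_n(\partial_{R'}A')>c'm_n(A')$, and invoke the upper control to obtain $f(\partial_{R'}A')\subseteq \partial_{\rho_+(R')}A$; pushing forward yields $\tilde m_n(\partial_{\rho_+(R')}A)\geq m_n(\partial_{R'}A')>c'\tilde m_n(A)$.

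The main obstacle lies in transferring the ghost condition to the pushforward, and this is precisely where uniformly bounded geometry of $Y$ together with bounded geometry of $X$ enter. Lower coarse control bounds the $Y$-diameter of each fibre $f^{-1}(x)\cap Y_n$ by some constant $T$ depending only on $\rho_-$, and uniformly bounded geometry of $Y$ then bounds the fibre cardinality by a single constant $K$ independent of both $x$ and $n$. Hence $\tilde m_n(x)\leq K\sup_{y\in Y_n}m_n(y)\to 0$, so $Z$ is a sparse subspace of $X$ consisting of ghostly measured asymptotic expanders. Combined with the already established equivalences (1)$\Leftrightarrow$(2)$\Leftrightarrow$(3) via Theorem~\ref{thm: main result}, this closes the cycle.
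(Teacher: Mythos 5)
Your proposal is correct, and the two halves merit separate comments. The equivalence of (1), (2) and (3) is handled essentially as in the paper: both arguments translate a block-rank-one (ghost) projection into its associated sequence of probability measures and invoke Theorem~\ref{thm: main result} in its two forms, together with the observation (Lemma~\ref{lem: ghost}) that the ghost condition on $P$ is verbatim the ghostly condition on $\{m_n\}$; the paper merely packages the passage between $\H_0=\C$ and $\H_0=\ell^2(\N)$ through the uniformization $\widetilde P$ and the tensoring $P\otimes e$ (Remarks~\ref{measure to uniformization} and \ref{lift by rank-1 proj}), which is implicit in your phrasing. For $(3)\Leftrightarrow(4)$, however, you take a genuinely different route. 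The paper argues operator-algebraically: a coarse embedding of a ghostly sequence with uniformly bounded geometry is a measured weak embedding (Lemma~\ref{lem:measured.weak.embedding}), Proposition~\ref{prop: weak embedding and ghost projection.finite to 1 uniform case} then manufactures a block-rank-one ghost projection in $C^*_u(Z)$ for a sparse subspace $Z\subseteq X$ via a covering isometry, and one translates back to measured asymptotic expanders through Theorem~\ref{thm:main result uniform case}. You instead push the measures forward directly: $\tilde m_n:=f_*m_n$ on $Z_n:=f(Y_n)$, with the expansion constants transported by $f(\partial_{R'}A')\subseteq\partial^{Z_n}_{\rho_+(R')}A$ and the ghostly condition preserved because the fibres of $f$ have $Y$-diameter at most $T:=\sup\rho_-^{-1}(\{0\})$ and hence cardinality at most $N_T$ by uniformly bounded geometry. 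This is more elementary and entirely bypasses Theorem~\ref{thm: main result} for that implication (at the cost of not reusing Proposition~\ref{prop: weak embedding and ghost projection.finite to 1 uniform case}, which the paper needs elsewhere anyway); note that the pushforward construction at the level of measures is in any case exactly what formula (\ref{formula of measure on Yn}) computes inside the paper's covering-isometry argument, so the two proofs are secretly doing the same bookkeeping. Two small points to tidy up: the sets $Z_n=f(Y_n)$ need not be pairwise disjoint for small indices, so you should discard finitely many terms (or pass to a subsequence, as in Proposition~\ref{prop:general case of projections in Roe}) before calling $Z=\bigsqcup_n Z_n$ a sparse subspace; and you should say explicitly that $T$ is finite because $\rho_-$ is non-decreasing and unbounded. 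Neither affects the validity of the argument.
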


Secondly, Theorem~\ref{thm: main result} possibly gives rise to a new source of counterexamples to the coarse Baum-Connes conjecture. More precisely, we obtain a measured analogue of \cite[Theorem~D]{structure}:
\begin{introcor}[Corollary~\ref{cbc+gmae}]
Let $X$ be a sparse space consisting of ghostly measured asymptotic expanders. If $X$ has bounded geometry and admits a fibred coarse embedding into a Hilbert space, then it does not satisfy the coarse Baum-Connes conjecture. 
\end{introcor}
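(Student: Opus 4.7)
The plan is to produce, from the hypotheses, a ``ghost'' projection in $C^*(X)$ that obstructs the coarse Baum--Connes conjecture, and then to replay the maximal-versus-reduced assembly argument underpinning \cite[Theorem~D]{structure}. The present paper's main contribution to the argument is the mechanism that manufactures such a projection from \emph{measured} (rather than counting-measure) asymptotic expander data.

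\emph{Step 1 (ghost projection).} I apply the fundamental theorem, Theorem~B, in the case $\H_0=\ell^2(\N)$, to the given decomposition $X=\bigsqcup_n X_n$ and the probability measures $m_n$ witnessing the ghostly measured asymptotic expander property. Picking on each $\ell^2(X_n;\ell^2(\N))$ the rank-one projection $P_n$ onto the span of $\sum_{x\in X_n}\sqrt{m_n(x)}\,\delta_x\otimes\xi$ for some fixed unit vector $\xi\in\ell^2(\N)$ produces a block-rank-one projection $P:=\bigoplus_n P_n\in\mathfrak B(\ell^2(X;\ell^2(\N)))$ whose associated measures are exactly the $m_n$. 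Theorem~B then places $P\in C^*(X)$, and the hypothesis $\sup_{x\in X_n}m_n(x)\to 0$ makes $P$ a ghost.

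\emph{Step 2 (assembly map setup).} Because $X$ has bounded geometry and admits a fibred coarse embedding into Hilbert space, Chen--Wang--Yu's theorem says that the maximal coarse Baum--Connes assembly map $\mu^{\max}\colon KX_*(X)\to K_*(C^*_{\max}(X))$ is an isomorphism. Suppose, for contradiction, that the reduced coarse Baum--Connes conjecture also holds for $X$. Then from $\mu=\lambda_*\circ\mu^{\max}$, where $\lambda_*$ is induced by the canonical quotient $C^*_{\max}(X)\to C^*(X)$, one concludes that $\lambda_*$ is an isomorphism on $K_0$; in particular, the class $[P]\in K_0(C^*(X))$ must lie in the image of $\lambda_*$.

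\emph{Step 3 (ghost obstruction).} The final and decisive step is to show that $[P]$ cannot lie in the image of $\lambda_*$. Here I would transcribe the strategy of \cite[Theorem~D]{structure}: construct a ``ghost-sensitive'' trace-like functional on $K_0(C^*(X))$ --- built from a limiting procedure over the blocks $X_n$ together with the measures $m_n$ --- which annihilates $\lambda_*(K_0(C^*_{\max}(X)))$ (essentially because maximal-assembly representatives are approximable by locally finite operators) but is non-zero on the block-rank-one ghost class $[P]$.

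The principal difficulty is Step~3: in \cite{structure} the obstructing functional is tailored to the uniform counting measures on the blocks $X_n$, whereas here one must adapt it to the possibly non-uniform $m_n$. I expect the structure theorem of the present paper (Corollary~\ref{cor:MAE to ME}), which upgrades measured asymptotic expanders to measured expanders with bounded measure ratios, to supply precisely the uniform measure-theoretic control needed to transplant the classical $K$-theoretic invariant into the measured framework; once that is in place, the rest of the argument is formal.
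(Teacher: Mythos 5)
Your Step 1 is correct and is exactly the new content this paper contributes to the corollary: writing $X=\bigsqcup_n X_n$ with the measures $m_n$ witnessing the ghostly measured asymptotic expander property, Theorem~\ref{thm: main result} (via Proposition~\ref{prop: measured asymptotic expanders}, Theorem~\ref{thm:main result Roe} and Lemma~\ref{lem: ghost}) places the block-rank-one projection $P=\bigoplus_n P_n$ with associated measures $m_n$ inside $C^*(X)$ as a non-compact ghost. Step 2 is also the paper's setup. From that point on the paper simply invokes the proof of \cite[Theorem~6.7]{structure} verbatim (see Theorem~\ref{thm: existence of ghost proj in Roe} and Corollary~\ref{cbc+gmae}): no adaptation to the measures $m_n$ is required, because the only properties of $P$ used downstream are that it is a ghost and that each block has rank one, so that $\mathrm{Tr}(\chi_{X_n}P\chi_{X_n})=1$ for every $n$ independently of $m_n$. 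The ``principal difficulty'' you flag in Step 3 is therefore not where the work lies; the work is entirely in Step 1.

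Moreover, the mechanism you propose for Step 3 is genuinely flawed as stated. Every element of $C^*(X)$ is by definition a norm limit of locally compact finite propagation operators, so ``approximability by locally finite operators'' cannot be what separates the image of $K_0(C^*_{max}(X))$ from $[P]$. Concretely, pick base points $x_n\in X_n$ and a rank-one projection $q\in\K(\H_0)$: the propagation-zero projection $Q:=\bigoplus_n \chi_{\{x_n\}}\otimes q$ lifts tautologically to $C^*_{max}(X)$, hence $[Q]$ lies in the image you want to annihilate, yet any functional built from limits of the block traces $\mathrm{Tr}(\chi_{X_n}\,\cdot\,\chi_{X_n})$ assigns the same value $1$ to $[Q]$ as to $[P]$. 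So no such globally defined ``ghost-sensitive'' functional on $K_0(C^*(X))$ exists. The actual argument of \cite[Theorem~6.7]{structure} (following Willett--Yu and Finn-Sell) works inside the ghost ideal $I_G$: for a bounded geometry sparse space admitting a fibred coarse embedding, surjectivity of the assembly map would force $\iota_*\colon K_*(\K)\to K_*(I_G)$ to be surjective, and the limiting block trace is applied only within $K_0(I_G)$, where the comparison classes come from compact projections whose block traces tend to zero. Your contradiction should therefore be routed through the extension $0\to I_G\to C^*(X)\to C^*(X)/I_G\to 0$ rather than through a functional defined on all of $K_0(C^*(X))$.
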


The remainder of the paper concentrates on the study of measured (asymptotic) expanders in order to provide sufficient conditions and examples for Corollary~\ref{cor C} (and thus also Theorem~\ref{introthm:rigidity.geometric.condition}). In Section~\ref{subsection: avgeraging projection of measured expanders}, we obtain an $L^p$-Poincar\'{e} inequality for measured expanders (see Corollary~\ref{cor:Poincare for measured expanders}). Using the structure theorem for measured asymptotic expanders (Corollary~\ref{cor:MAE to ME}), we conclude that any sparse space consisting of ghostly measured asymptotic expanders with uniformly bounded geometry cannot coarsely embed into any $L^p$-space for $p\in [1,\infty)$ (see Corollary~\ref{cor: non CE}). Thus, we obtain the following corollary of Theorem~\ref{introthm:rigidity.geometric.condition}:

\begin{introcor}\label{introcor:lp}
Let $X$ and $Y$ be metric spaces with bounded geometry such that $Y$ coarsely embeds into some $L^p$-space for $p \in [1,\infty)$. Then the following are equivalent:
\begin{enumerate}
  \item $X$ is coarsely equivalent to $Y$;
  \item $C^*_u(X)$ is stably $\ast$-isomorphic to $C^*_u(Y)$;
  \item $C^*(X)$ is $\ast$-isomorphic to $C^*(Y)$.
\end{enumerate}
\end{introcor}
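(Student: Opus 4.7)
The plan is to deduce this corollary directly from the main rigidity theorem, Theorem~\ref{introthm:rigidity.geometric.condition}. The implications $(1)\Rightarrow(2)$ and $(1)\Rightarrow(3)$ are classical and recalled in the introduction: a coarse equivalence between bounded geometry spaces induces a (stable) $\ast$-isomorphism of their (uniform) Roe algebras. For the converse implications, Theorem~\ref{introthm:rigidity.geometric.condition} applies once at least one of $X$, $Y$ is known to contain no sparse subspace consisting of ghostly measured asymptotic expanders. I would therefore establish this non-containment property for $Y$ using the $L^p$-embeddability hypothesis.

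Suppose, towards a contradiction, that $Z=\bigsqcup_n Z_n \subseteq Y$ is a sparse subspace consisting of ghostly measured asymptotic expanders. Two simple inheritance observations then yield a contradiction. First, any subspace of $Y$ coarsely embeds into the same $L^p$-space whenever $Y$ does, because the restriction of a coarse embedding is still a coarse embedding; hence $Z$ coarsely embeds into $L^p$. Second, the bounded geometry of $Y$, combined with the definition of sparsity (each $Z_n$ finite and the components pairwise far apart), forces the family $\{Z_n\}$ to have \emph{uniformly} bounded geometry in the sense that $\sup_n \sup_{z\in Z_n}|B_{Z_n}(z,r)| < \infty$ for every $r>0$. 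With these two ingredients in place, Corollary~\ref{cor: non CE}, which asserts precisely that a sparse space consisting of ghostly measured asymptotic expanders with uniformly bounded geometry cannot coarsely embed into any $L^p$-space for $p\in[1,\infty)$, gives the desired contradiction.

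Since Theorem~\ref{introthm:rigidity.geometric.condition} and Corollary~\ref{cor: non CE} are invoked as black boxes, the present deduction is short; the real work lies in those two results. In particular, Corollary~\ref{cor: non CE} itself rests on the $L^p$-Poincar\'e inequality for measured expanders (Corollary~\ref{cor:Poincare for measured expanders}) together with the structural decomposition (Corollary~\ref{cor:MAE to ME}) of measured asymptotic expanders into measured expanders with bounded measure ratios. Beyond cleanly recording the two inheritance properties above, I do not expect any further obstacle in the deduction of Corollary~\ref{introcor:lp}.
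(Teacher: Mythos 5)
Your proposal is correct and follows exactly the paper's route: the paper likewise derives Corollary~\ref{introcor:lp} by combining Corollary~\ref{cor: non CE} (to rule out sparse subspaces of $Y$ consisting of ghostly measured asymptotic expanders, using that such subspaces inherit both the $L^p$-embeddability and uniformly bounded geometry from $Y$) with Theorem~\ref{introthm:rigidity.geometric.condition}. The two inheritance observations you record are precisely the implicit content of the paper's one-line deduction, so nothing is missing.
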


Apart from Corollary~\ref{introcor:lp}, we provide concrete examples of spaces which \emph{are} rigid, but \emph{do not} coarsely embed into any $L^p$-space. These examples are \emph{not} covered by previously existing results at least for Roe algebras.

To this end, we come up with the concept of a \emph{measured weak embedding} (see Definition~\ref{de:measure.weak.embedding}) and extend the main results in \cite{AT15, delabie2018box} to the measured setting. More precisely, the structure result (Corollary~\ref{cor:MAE to ME}) together with the $L^p$-Poincar\'{e} inequality for measured expanders (Corollary~\ref{cor:Poincare for measured expanders}) leads to Proposition~\ref{prop:AT15 analogue}, which is a measured analogue of \cite[Proposition 2]{AT15}. By virtue of Proposition~\ref{prop:AT15 analogue}, the same idea in \cite{AT15, delabie2018box} can be easily adapted to show the following theorem:

\begin{introthm}[Theorem~\ref{AT example} and Theorem~\ref{box space of F3}]\label{examples}
There exists a box space $X$ of a finitely generated residually finite group $\Gamma$ such that $X$ does not coarsely embed into any $L^p$-space for $1\leq p<\infty$, but $X$ does not measured weakly contain any measured asymptotic expanders with uniformly bounded geometry. In fact, the group $\Gamma$ may also be chosen to be the free group $F_3$ of rank three. 
\end{introthm}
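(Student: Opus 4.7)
The plan is to follow the same high-level strategy as Arzhantseva--Tessera \cite{AT15} and Delabie--Khukhro \cite{delabie2018box}, but use our measured analogues of their key technical inputs to simultaneously rule out measured weak containment of measured asymptotic expanders. Concretely, the box spaces $X$ constructed in those papers are already known to fail coarse embedding into any $L^p$-space for $p\in[1,\infty)$, so the first clause comes for free; the new content is the measured non-containment clause, and the two clauses are established together by playing the quantitative Poincar\'e constants of the box space at various scales against the uniform Poincar\'e behaviour forced on any measured asymptotic expander sitting inside.

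First I would recall the input: by the structure theorem (Corollary~\ref{cor:MAE to ME}), any sequence of measured asymptotic expanders with uniformly bounded geometry admits, after passing to a uniform exhaustion, a description as measured expander graphs with bounded measure ratios, and Corollary~\ref{cor:Poincare for measured expanders} then gives a uniform $L^p$-Poincar\'e inequality at a fixed scale for these exhausting pieces. Proposition~\ref{prop:AT15 analogue}, which is our measured analogue of \cite[Proposition~2]{AT15}, packages this into the statement that if $X$ measured weakly contains a sequence of measured asymptotic expanders with uniformly bounded geometry (in the sense of Definition~\ref{de:measure.weak.embedding}), then there must exist sequences of subsets of $X$ at growing scales on which a suitable measured Cheeger-type/Poincar\'e constant remains uniformly bounded below.

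Next I would invoke the actual constructions from \cite{AT15} and \cite{delabie2018box}. For the Arzhantseva--Tessera example one chooses a chain of finite-index normal subgroups whose quotients have Poincar\'e constants tending to zero slowly enough to obstruct $L^p$-embedding but fast enough that, at every scale, the relevant Cheeger-type quantity for \emph{any} probability measure on \emph{any} suitably sized subset decays; the same scheme works verbatim for the free group $F_3$ via \cite{delabie2018box}. Plugging this quantitative decay into the conclusion of Proposition~\ref{prop:AT15 analogue} yields a contradiction with the assumed uniform lower bound, proving that no such measured weak containment can occur. The $L^p$-non-embeddability at $p=2$, and hence at every $p\in[1,\infty)$ by the standard interpolation trick used in \cite{AT15}, is exactly the argument in those papers and is reused without change.

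The main obstacle I anticipate is bookkeeping in Proposition~\ref{prop:AT15 analogue}: one must verify that the definition of measured weak containment (Definition~\ref{de:measure.weak.embedding}) is flexible enough that the scales and measures produced by an asymptotic-expander sequence inside $X$ can genuinely be compared to the Poincar\'e constants of the box-space quotients, and that the uniformly-bounded-geometry hypothesis is actually used to control the measure ratios appearing in Corollary~\ref{cor:MAE to ME}. Once that matching is set up cleanly, the proof is essentially a transcription of the \cite{AT15} and \cite{delabie2018box} arguments with the measured Poincar\'e inequality replacing the counting-measure one, and the two conclusions of the theorem emerge from the same quantitative estimate.
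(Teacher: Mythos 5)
Your overall strategy --- reuse the Arzhantseva--Tessera and Delabie--Khukhro constructions, take the failure of coarse embeddability into $L^p$ from those papers for free, and obtain the measured non-containment by substituting Proposition~\ref{prop:AT15 analogue} (resp.\ Corollary~\ref{new delabbie-Khukhro}) for \cite[Proposition~2]{AT15} (resp.\ \cite[Proposition~2.4]{delabie2018box}) --- is exactly how the paper proves Theorems~\ref{AT example} and~\ref{box space of F3}, including the interpolation remark from \cite[Section~5]{delabie2018box} needed to upgrade the $F_3$ example from Hilbert space to all $L^p$.

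However, your description of what Proposition~\ref{prop:AT15 analogue} says, and of why the constructions satisfy it, is inaccurate in a way that would matter in execution. Its hypothesis is not a quantitative decay rate of Poincar\'e constants of the quotients; it is the structural condition that each finite group $G_n$ in the coarse union sits in a short exact sequence $1\to N_n\to G_n\to Q_n\to 1$ with \emph{both} sequences $(N_n)_n$ and $(Q_n)_n$ coarsely embeddable into Hilbert space. Its conclusion is already the full non-containment statement, so there is no intermediate ``uniform lower bound on a Cheeger-type quantity at growing scales'' to be played against a decay rate; the proof is instead the two-step collapse argument of \cite{AT15}: compose a putative measured weak embedding with the quotient map and a coarse embedding of $(Q_n)$ into Hilbert space, apply the measured $L^p$-Poincar\'e inequality (via Lemma~\ref{Step II}, after reducing to measured expanders with bounded measure ratios by Corollary~\ref{cor:MAE to ME}) to force a positive-proportion subset into a single coset of $N_n$, then repeat with a coarse embedding of $(N_n)$ to force a positive-proportion subset onto a single point, contradicting Definition~\ref{de:measure.weak.embedding}. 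What you must therefore verify about the AT15 and DK box spaces is precisely this extension structure with Hilbert-embeddable kernel and quotient sequences --- which is what those papers supply --- rather than any decay estimate. With that correction, your plan coincides with the paper's argument.
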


As the box space in Theorem~\ref{examples} obviously cannot contain sparse subspaces consisting of ghostly measured asymptotic expanders (see Lemma~\ref{lem:measured.weak.embedding}), we can apply Theorem~\ref{introthm:rigidity.geometric.condition} to produce new rigid spaces:

\begin{introcor}\label{introcor:not-in-Lp-but-rigid}
	There exist box spaces with bounded geometry that do not coarsely embed into any $L^p$-space for $1\leq p<\infty$, but affirmatively answer Problem~\ref{ProblemRigidity}.
\end{introcor}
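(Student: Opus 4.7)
The plan is to combine the existence result Theorem~\ref{examples} with the main rigidity Theorem~\ref{introthm:rigidity.geometric.condition}, bridging them via Lemma~\ref{lem:measured.weak.embedding}. First, I take the box space $X$ produced by Theorem~\ref{examples}: it is a box space of a finitely generated residually finite group (in fact, one may choose $\Gamma=F_3$), it does not coarsely embed into any $L^p$-space for $1\leq p<\infty$, and it does not measured weakly contain any measured asymptotic expanders with uniformly bounded geometry. Bounded geometry of $X$ is automatic for box spaces: they are coarse disjoint unions of finite Cayley graphs of quotients of a fixed finitely generated group with respect to a fixed finite generating set, so the vertex degrees are uniformly bounded.

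The crucial step is to convert the absence of measured-weak containment of measured asymptotic expanders into the absence of sparse subspaces consisting of ghostly measured asymptotic expanders. For this I invoke Lemma~\ref{lem:measured.weak.embedding}: any hypothetical sparse subspace $Y=\bigsqcup_n Y_n\subseteq X$ equipped with the probability measures $m_n$ from Definition~\ref{DefiMainGeomProp}(2) gives rise to a measured weak embedding of the family $\{(Y_n,d_n,m_n)\}_{n\in\N}$ into $X$. Since $(Y_n,d_n,m_n)$ is by hypothesis a sequence of measured asymptotic expanders and since each $Y_n$ sits inside the bounded geometry space $X$, this measured-weakly embedded family automatically has uniformly bounded geometry, contradicting the defining property of $X$ from Theorem~\ref{examples}. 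Hence no such $Y$ exists in $X$.

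At this point $X$ satisfies the hypothesis of Theorem~\ref{introthm:rigidity.geometric.condition}, which with $X$ in either role immediately yields: whenever $C^*_u(X)$ is stably $*$-isomorphic to $C^*_u(Y)$, or $C^*(X)$ is $*$-isomorphic to $C^*(Y)$, for some bounded geometry space $Y$, then $X$ and $Y$ are coarsely equivalent. This affirmatively answers both parts of Problem~\ref{ProblemRigidity} for $X$, completing the corollary. The main substance of the argument resides upstream, in Theorem~\ref{examples}, whose proof relies on the structure theorem for measured asymptotic expanders (Corollary~\ref{cor:MAE to ME}), the $L^p$-Poincar\'e inequality for measured expanders (Corollary~\ref{cor:Poincare for measured expanders}), and the measured analogue of Arzhantseva--Tessera's Proposition~2 (Proposition~\ref{prop:AT15 analogue}); the corollary itself is then essentially bookkeeping, with the only genuine check being the \emph{qualitative} step that measured-weak non-containment of measured asymptotic expanders rules out ghostly sparse subspaces of that kind.
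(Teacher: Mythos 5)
Your proposal is correct and follows exactly the paper's route: take the box space from Theorem~\ref{examples}, use Lemma~\ref{lem:measured.weak.embedding} (noting that the inclusions of a sparse subspace are isometric, the pieces inherit uniformly bounded geometry from $X$, and the measures in Definition~\ref{DefiMainGeomProp}(2) are ghostly by definition) to rule out sparse subspaces consisting of ghostly measured asymptotic expanders, and then apply Theorem~\ref{introthm:rigidity.geometric.condition}. This matches the paper's one-sentence justification preceding the corollary.
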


In \cite[Thereom~1.2 and Thereom~1.3]{AT18}, Arzhantseva and Tessera constructed two examples of finitely generated groups as split extensions of two groups coarsely embeddable into Hilbert space but which themselves do not coarsely embed into Hilbert space. Hence Problem~\ref{ProblemRigidity} also has a positive answer for these two finitely generated groups, as they contain no sparse subspaces consisting of ghostly measured asymptotic expanders by Proposition~\ref{prop:AT15 analogue}. However, the rigidity for these groups is \emph{not} new at all and has already been verified in \cite{rigidity_CBC}. Indeed, we simply apply \cite[Theorem~1.5]{rigidity_CBC} because these two groups satisfy the coarse Baum-Connes conjecture \emph{with coefficients} (see \cite[Remark~2.10 and Proposition~2.11]{rigidity_CBC}). 

We end the introduction by asking the following two questions:
\begin{question}
Are ghostly measured asymptotic expanders obstacles to Problem~\ref{ProblemRigidity} (2)?
\end{question}

\begin{question}
In the last decade, a theory of ``high dimensional expanders'' has begun to emerge (see \cite{MR3966743} for a detailed survey). Therefore, it is natural to ask: can we define \emph{``measured asymptotic expanders in high dimensions''} such that the corresponding Theorem~\ref{thm: main result} holds for all block-rank-$n$ projections for $n\in \N$?
\end{question}

\subsection*{Convention} Throughout the paper, all metric spaces are \emph{non-empty and discrete}, and graphs are \emph{connected and undirected}.


\section{Preliminaries}\label{Preliminaries}
In this section, we introduce notation for metric spaces, and recall several basic definitions and results from coarse geometry.

\subsection{Metric spaces and expander graphs}\label{ssec:metric spaces}
Let $(X,d)$ be a metric space, $x\in X$ and $R>0$. Then $B(x,R)$ denotes the
\emph{closed ball} with radius $R$ and centre at $x$. For any subset $A \subseteq X$, $|A|$ denotes the \emph{cardinality} of $A$; $\diam(A)$ denotes the \emph{diameter} of $A$; $\Nd_R(A)=\{x\in X : d(x,A)\leq R\}$ denotes the \emph{$R$-neighbourhood of $A$}; and $\partial_R A=\{x\in X\backslash A: d(x,A)\leq R\}$ denotes the \emph{(outer) $R$-boundary of $A$}. For a subspace $Y \subseteq X$ and a subset $A \subseteq Y$, we may also consider the  \emph{relative} neighbourhood and boundary defined as $\Nd_R^Y(A)=\Nd_R(A) \cap Y$ and $\partial_R^Y A = (\partial_R A) \cap Y$.

Moreover, we say that a metric space $(X,d)$ has \emph{bounded geometry} if $\sup_{x\in X} |B(x,R)|$ is finite for each $R>0$. Any metric space with bounded geometry is automatically countable and discrete. More generally, a sequence of metric spaces $\{(X_n,d_n)\}_{n\in \N}$ has \emph{uniformly bounded geometry} if for all $R>0$, the number $N_R:=\sup_{n\in \N}\sup_{x\in X_n}|B(x,R)|$ is finite. 

In this paper, we are particularly interested in a certain type of metric spaces arising from graphs, the so-called expander graphs.

For a (connected) graph $\G=(V,E)$ with the vertex set $V$ and the edge set $E$, we endow $V$ with the \emph{edge-path} metric, which is defined to be the length (\emph{i.e.}, the number of edges) in a shortest path connecting given two vertices. For $A\subseteq V$, $\partial^V A$ (or just $\partial A$) denotes its $1$-boundary, which is also called the \emph{vertex boundary} of $A$, and its \emph{edge boundary} $\partial^E A$ is defined to be the set of all edges in $E$ with one endpoint in $A$ and the other one in $V \setminus A$.

We say that two vertices $v$ and $w$ in $V$ are \emph{adjacent} if there is an edge in $E$ connecting them (in symbols $v \thicksim_E w$ or just $v \thicksim w$). For a vertex $v\in V$, its \emph{valency} is defined to be the number of vertices adjacent to $v$. It is clear that the edge-path metric on $V$ has bounded geometry if and only if the graph has \emph{bounded valency} in the sense that there exists some $K \geq 0$ such that each vertex has valency bounded by $K$. More generally, a sequence of graphs $\{\G_n\}_{n\in \N}$ is said to have \emph{uniformly bounded valency} if there exists some $K \geq 0$ such that each vertex in $\G_n$ has valency bounded by $K$ for all $n$. Hence, a sequence of graphs has uniformly bounded valency if and only if it has uniformly bounded geometry with respect to the edge-path metrics. 

For a finite graph $\G=(V,E)$, its \emph{Cheeger constant} is defined to be
\[
c(\G):=\min\big\{\frac{|\partial A|}{|A|}: A\subseteq V,\  0<|A|\leq \frac{|V|}{2}\big\},
\]
where $\partial A$ is the vertex boundary of $A$. Now we recall the definition of expander graphs, which are highly connected but sparse at the same time.
\begin{defn}\label{defn:expanders}
A sequence of finite (connected)\footnote{We note that the expanding condition already implies that all the graphs $\G_n$ in an expander sequence are connected.} graphs $\{\G_n=(V_n,E_n)\}_{n\in \N}$ is called a sequence of \emph{expander graphs} if it has uniformly bounded valency, $|V_n| \to \infty$ as $n\to \infty$, and $\inf_{n\in \N} c(\G_n)>0$.
\end{defn}

We can always make a sequence of finite metric spaces into a single metric space, which is sparse in the sense of Definition~\ref{DefiMainGeomProp}:
 
 \begin{defn}\label{defn:coarse disjoint union}
Let $\{(X_n,d_n)\}_{n\in \N}$ be a sequence of finite metric spaces. A \emph{coarse disjoint union} of $\{(X_n,d_n)\}_{n\in \N}$ is a metric space $(X,d)$, where $X=\bigsqcup_{n\in\N}X_{n}$ is the disjoint union of $\{X_n\}$ as a set and $d$ is a metric on $X$ such that
\begin{itemize}
	\item the restriction of $d$ on each $X_n$ coincides with $d_n$;
	\item $d(X_n,X_m)\rightarrow \infty $ as $n+m\rightarrow \infty$ and $n\neq m$.
\end{itemize}
If we need a precise choice of $d$, we can choose $d$ as follows: $d(x,y)=n+m+\diam(X_n)+\diam(X_m)$ for all $x\in X_n$ and $y\in X_m$, whenever $m$ and $n$ are distinct natural numbers. Moreover, any two of such metrics are coarsely equivalent (see Section~\ref{Metric embeddings} for the precise meaning).
\end{defn}
Clearly, a sequence of finite metric spaces $\{X_n\}_{n\in \N}$ has uniformly bounded geometry \emph{if and only if} its coarse disjoint union has bounded geometry.

\subsection{Metric embeddings}\label{Metric embeddings}

Given two metric spaces $(X,d_X)$ and $(Y,d_Y)$, a map $f: X \to Y$ is called a \emph{coarse embedding} if there exist non-decreasing unbounded functions $\rho_\pm: [0,\infty) \to [0,\infty)$ such that 
\[
\rho_-(d_X(x,y)) \leq d_Y(f(x),f(y)) \leq \rho_+(d_X(x,y))
\]
for any $x,y\in X$. In this case, we simply say that $Y$ \emph{coarsely contains} $X$. If $f:X\to Y$ is a coarse embedding and $\Nd_C(f(X))=Y$ for some constant $C > 0$, then we say that $f$ is a \emph{coarse equivalence} between $X$ and $Y$. In this case, we also say that $X$ and $Y$ are \emph{coarsely equivalent}.

More generally, let $\{X_n\}_{n\in \N}$ be a sequence of metric spaces and let $Y$ be another metric space. A sequence of maps $f_n:X_n \to Y$ is called a \emph{coarse embedding of $\{X_n\}_{n\in \N}$ into $Y$} if all $f_n$ are coarse embeddings with the same control functions $\rho_{\pm}$. One can easily check that a sequence of \emph{finite} metric spaces admits a coarse embedding into a non-trivial Banach space $Y$ if and only if its coarse disjoint union as a single metric space admits a coarse embedding into $Y$. \newline

Following the idea in \cite{MR1978492} (see also \cite[Section~7]{MR1911663} and \cite[Definition~5.2]{MR2058475}), we say that a sequence of finite graphs $\{X_n\}_{n\in \N}$ admits a \emph{weak embedding} into a metric space $Y$ if there exist $L>0$ and $L$-Lipschitz maps $f_n\colon X_n\to Y$ such that for every $R>0$,
\begin{align}\label{weak embed}
\lim_{n\to\infty}\sup_{x\in X_n}\frac{|f_n^{-1}(B(f_n(x),R))|}{|X_n|}=0.
\end{align}
In this case, we also say that $Y$ \emph{weakly contains} the sequence $\{X_n\}_{n\in \N}$. 

\begin{rem}\label{bounded geometry weak embed}
If the target space $Y$ has bounded geometry, then (\ref{weak embed}) is equivalent to the following (see \cite[Definition~5.2]{MR2058475}):
$$
\lim_{n\to\infty}\sup_{x\in X_n}\frac{|f_n^{-1}(f_n(x))|}{|X_n|}=0.
$$
Thus, a coarse embedding of a sequence of finite graphs $\{X_n\}_{n\in \N}$ with uniformly bounded valency into a metric space $Y$ is a weak embedding provided that $|X_n|\to \infty$ as $n\to \infty$.
\end{rem}

For the purpose of this paper, we would also like to consider measured weak embeddings of ghostly sequences of finite measured metric spaces.

\begin{defn}\label{def:ghostly}
We say that $\{(X_n,d_n,m_n)\}_{n\in \N}$ is a sequence of \emph{finite measured metric spaces} if each $(X_n,d_n)$ is a finite metric space, and each $m_n$ is a non-trivial and finite measure defined on the $\sigma$-algebra of all subsets of $X_n$. Moreover, the sequence $\{(X_n,d_n,m_n)\}_{n\in \N}$ is called \emph{ghostly} if
\begin{align}\label{ghostly}
\lim_{n\to \infty} \sup_{x\in X_n} \frac{m_n(x)}{m_n(X_n)}=0.
\end{align}
\end{defn}

\begin{rem}\label{rem:ghostly}
If a sequence of finite measured metric spaces $\{(X_n,d_n,m_n)\}_{n\in \N}$ is ghostly, then $|\supp (m_n)|\to \infty$ as $n\to \infty$. However, it is clear that the converse may fail in general. Moreover, if each $m_n$ is the counting measure on $X_n$ then $\{(X_n,d_n,m_n)\}_{n\in \N}$ is ghostly if and only if $|X_n|\to \infty$ as $n\to \infty$.
\end{rem}

\begin{defn}\label{de:measure.weak.embedding} 
We say that a sequence of finite measured metric spaces $\{(X_n,d_n,m_n)\}_n$ admits a \emph{measured weak embedding} into a metric space $Y$ if there exists a sequence of maps $f_n\colon X_n\to Y$ such that
\begin{itemize}
\item[(1)] the sequence $\{f_n\}_{n\in \N}$ is \emph{coarse}\footnote{We follow the terminology from \cite{AT15}; other sources use \emph{bornologous} or \emph{uniformly expansive} for ``coarse'', and further \emph{uniformly coarse} or \emph{equi--coarse} when used for families for maps.}, \emph{i.e.}, there exists a non-decreasing unbounded function $\rho_+: [0,\infty) \to [0,\infty)$ such that for every $n\in \N$ and $x,y\in X_n$, we have $d_Y(f_n(x),f_n(y))\leq \rho_+(d_{X_n}(x,y))$;
\item[(2)] for every $R>0$,
\begin{equation}\label{EQ:measured weak embedding}
  \lim_{n\to\infty}\sup_{x\in X_n}\frac{m_n(f_n^{-1}(B(f_n(x),R)))}{m_n(X_n)}=0.
\end{equation}
\end{itemize}
In this case, we also say that $Y$ \emph{measured weakly contains} the sequence $\{(X_n,d_n,m_n)\}_n$.
\end{defn}

\begin{rem}\label{graph mwe}
When all $X_n$ in Definition~\ref{de:measure.weak.embedding} are finite graphs, condition (1) is equivalent to that all maps $f_n:X_n\to Y$ are $L$-Lipschitz for some $L>0$. Moreover, if each $m_n$ is the counting measure on $X_n$, then we recover the usual notion of weak embedding.
\end{rem}

Similar to Remark~\ref{bounded geometry weak embed}, if the target space $Y$ has bounded geometry then (\ref{EQ:measured weak embedding}) is equivalent to the following:
$$
\lim_{n\to\infty}\sup_{x\in X_n}\frac{m_n(f_n^{-1}(f_n(x)))}{m_n(X_n)}=0.
$$
At this point, the reader might want to compare this expression with (\ref{ghostly}).

Recall that a map $f:X\to Y$ between two metric spaces is called \emph{finite-to-one} if the preimage $f^{-1}(y)$ is finite for every $y\in Y$. Moreover, a sequence of maps $\{f_n: X_n\to Y\}_{n\in \N}$ between metric spaces is called \emph{uniformly finite-to-one} if there exists some $N\in \N$ such that for any $n\in \N$ and any $y\in Y$ we have $|f_n^{-1}(y)| \leq N$. In particular, we say that a map $f:X\to Y$ between two metric spaces is \emph{uniformly finite-to-one} if the constant sequence $\{f_n=f\}_{n\in \N}$ is uniformly finite-to-one. Notice that if $X$ has bounded geometry, then any coarse embedding $f: X\to Y$ is uniformly finite-to-one.

The proof of the following lemma is straightforward from the definitions, thus omitted.
\begin{lem}\label{lem:measured.weak.embedding}
Let $\{(X_n,d_n,m_n)\}_{n\in \N}$ be a ghostly sequence of finite measured metric spaces, and $Y$ be a metric space. Then the following hold:
\begin{itemize}
\item If $\{(X_n,d_n)\}_n$ has uniformly bounded geometry, then every coarse embedding of the sequence $\{(X_n,d_n,m_n)\}_n$ into $Y$ is a measured weak embedding.
\item If $Y$ has bounded geometry, then every uniformly finite-to-one and coarse sequence of maps from $\{(X_n,d_n,m_n)\}_n$ into $Y$ is a measured weak embedding. 
\end{itemize}
\end{lem}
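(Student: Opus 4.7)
The plan is straightforward: verify the two conditions in Definition~\ref{de:measure.weak.embedding}. Condition (1), that the sequence $\{f_n\}$ is coarse, is immediate in both cases: it is part of the definition of a coarse embedding in the first item, and is assumed directly in the second. So all the work is in verifying the measured decay condition (\ref{EQ:measured weak embedding}).

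The unifying idea is to show that in both situations, for every fixed $R>0$, the preimage $f_n^{-1}(B(f_n(x),R))$ has cardinality bounded by a constant $C_R$ that is uniform in $n$ and in $x\in X_n$. Once this is established, one concludes using the trivial estimate
$$\frac{m_n(f_n^{-1}(B(f_n(x),R)))}{m_n(X_n)} \;\leq\; C_R \cdot \sup_{y\in X_n}\frac{m_n(y)}{m_n(X_n)},$$
and invokes the ghostly hypothesis (\ref{ghostly}) to send the right-hand side to zero as $n\to\infty$, uniformly in $x\in X_n$.

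For the first item, the cardinality bound comes from the lower control function: since $\rho_-$ is non-decreasing and unbounded, there exists $S=S(R)$ with $\rho_-(t) > R$ whenever $t>S$, and hence $f_n^{-1}(B(f_n(x),R)) \subseteq B_{X_n}(x,S)$. The uniformly bounded geometry assumption then produces the desired uniform bound $N_S$ on the cardinality of all such balls. For the second item, bounded geometry of $Y$ gives $|B(y,R)|\leq K_R$ for all $y\in Y$, and the uniform finite-to-one assumption with constant $N$ yields $|f_n^{-1}(B(f_n(x),R))|\leq N\cdot K_R$.

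There is no real obstacle here; as the authors themselves observe, this is a routine adaptation of the standard argument for ordinary weak embeddings, with the counting measures replaced by the $m_n$ and with the finite-cardinality bound on preimages combined with the ghostly hypothesis taking the place of the usual condition $|X_n|\to\infty$.
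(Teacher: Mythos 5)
Your proof is correct and is exactly the routine argument the paper alludes to when it leaves the lemma to the reader: in both cases you bound $|f_n^{-1}(B(f_n(x),R))|$ uniformly in $n$ and $x$ (via $\rho_-$ plus uniformly bounded geometry of the $X_n$ in the first item, and via bounded geometry of $Y$ plus the uniform finite-to-one constant in the second), and then the ghostly condition finishes the estimate. No gaps.
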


\subsection{Rigidity for Roe-like algebras}
For a Hilbert space $\H$, we denote the closed unit ball of $\H$ by $(\H)_1$. Moreover, $\B(\H)$ and $\K(\H)$ denote the spaces of bounded and compact operators on $\H$, respectively.

For a discrete metric space $(X,d)$, we denote by $\chi_A$ the characteristic function on a subset $A\subseteq X$ and $\delta_x=\chi_{\{x\}}$ for $x\in X$. Fixing a Hilbert space $\H_0$, we consider the Hilbert space $\ell^2(X;\H_0) \cong \ell^2(X) \otimes \H_0$. When $\H_0=\C$, we also write $\ell^2(X)=\ell^2(X;\C)$. There is a natural $*$-representation $\ell^\infty(X) \to \B(\ell^2(X;\H_0))$ induced by the canonical multiplication representation of $\ell^\infty(X)$ on $\ell^2(X)$. 

We can always regard any operator $T \in \B(\ell^2(X;\H_0))$ as an $X$-by-$X$ matrix $(T_{x,y})_{x,y\in X}$ with entries in $\B(\H_0)$. More precisely, $T_{x,y}\in \B(\H_0)$ is defined by 
\[
T_{x,y}\xi = \big(T(\delta_y \otimes \xi)\big) (x)
\]
for $\xi\in \H_0$ and $x,y\in X$. (Here $\delta_y\otimes\xi\in \ell^2(X) \otimes \H_0 \cong \ell^2(X;\H_0)$ represents the function from $X$ to $\H_0$ taking the value $\xi$ at  $y$ and $0$ otherwise.) The \emph{propagation of $T$} is defined to be
\[
\ppg(T):=\sup\{d(x,y): T_{x,y} \neq 0\}.
\]
We say that $T\in \B(\ell^2(X;\H_0))$ has \emph{finite propagation} if $\ppg(T)$ is finite, and $T$ is \emph{locally compact} if $T_{x,y} \in \K(\H_0)$ for all $x,y\in X$.

In this paper, we will deal with the rigidity problem for the following variants of Roe algebras:
\begin{defn}\label{defn:Roe algebra}
Let $X$ be a metric space with bounded geometry and $\H_0$ be an infinite-dimensional separable Hilbert space.
\begin{enumerate}
  \item The \emph{Roe algebra of $X$}, denoted by $C^*(X)$, is defined to be the norm closure of all finite propagation locally compact operators in $\B(\ell^2(X;\H_0))$.
  \item The \emph{uniform algebra of $X$}, denoted by $UC^*(X)$, is defined to be the norm closure of all finite propagation operators $T\in \B(\ell^2(X;\H_0))$ such that there exists some $N \in \N$ satisfying $\mathrm{rank}(T_{x,y}) \leq N$ for all $x,y\in X$.
  \item The \emph{stable Roe algebra of $X$}, denoted by $C^*_s(X)$, is defined to be the norm closure of all finite propagation operators $T\in \B(\ell^2(X;\H_0))$ such that there exists a finite-dimensional subspace $H \subseteq \H_0$ satisfying $T_{x,y} \in \B(H)$ for all $x,y\in X$.
  \item The \emph{uniform Roe algebra of $X$}, denoted by $C^*_u(X)$, is defined to be the norm closure of all finite propagation operators $T \in \B(\ell^2(X))$.
\end{enumerate}
\end{defn}

It is known that $C^*_u(X) \hookrightarrow C^*_s(X) \subseteq UC^*(X) \subseteq C^*(X)$, where the last two inclusions are canonical. Moreover, we have $C^*_u(X) \otimes \K(\H_0) \cong C^*_s(X)$. We also use the following simplified terminology:

\begin{defn}[{\cite[Definition 2.3]{rigidity_CBC}, but see also \cite{BragaFarah19}}]\label{defn:Roe like algebra}
Let $X$ be a metric space with bounded geometry. A $C^*$-subalgebra $A \subseteq C^*(X)$ is called \emph{Roe-like} if $C^*_s(X) \subseteq A$.
\end{defn}

The rigidity for Roe-like algebras has already been extensively studied in the literature (\emph{e.g.}, \cite{BragaFarah19, braga2019embeddings, rigidity_CBC, vspakula2013rigidity}). In order to formulate the most up-to-date rigidity result, we need to recall several concepts: 

\begin{defn}[G. Yu\footnote{Attributed to G.~Yu in \cite[11.5.2]{Roe:lectures-on-coarse-geometry:03}.}]\label{defn:ghost.operator}
Let $X$ be a discrete metric space, $\H_0$ be a Hilbert space and $T \in \B(\ell^2(X;\H_0))$. We say that $T$ is a \emph{ghost} if for any $\varepsilon>0$, there exists a bounded subset $F \subseteq X$ such that for any $(x,y) \notin F \times F$ we have $\|T_{x,y}\|<\varepsilon$.
\end{defn}

The following lemma clarifies Definition~\ref{defn:block rank one projection} and Definition~\ref{def:ghostly}. Since its proof is elementary, we leave the details to the reader:
\begin{lem}\label{lem: ghost}
Let $(X,d)=\bigsqcup_{n\in\N}(X_n,d_n)$ be a sparse space and $\H_0$ be a Hilbert space. If $P \in \B(\ell^2(X;\H_0))$ is a block-rank-one projection with respect to $\{(X_n,d_n)\}_{n\in \N}$, then $P$ is a ghost \emph{if and only if} the associated sequence of finite measured metric spaces $\{(X_n,d_n,m_n)\}_n$ is ghostly.
\end{lem}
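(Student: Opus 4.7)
The proof is essentially a direct computation once each rank-one $P_n$ is written in a convenient form; I plan to proceed in four short steps.

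First, I would parametrise each $P_n$. Since $P_n$ is a rank-one projection on $\ell^2(X_n;\H_0)\cong \bigoplus_{x\in X_n}\H_0$, write $P_n=|\eta_n\rangle\langle \eta_n|$ for a unit vector $\eta_n\in\ell^2(X_n;\H_0)$ and decompose $\eta_n=\sum_{x\in X_n}\delta_x\otimes v_n(x)$ with $v_n(x)\in\H_0$. The normalisation $\|\eta_n\|=1$ gives $\sum_{x\in X_n}\|v_n(x)\|^2=1$, and the associated probability measure of Definition~\ref{defn:block rank one projection} is $m_n(x)=\|v_n(x)\|^2$.

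Next I compute the matrix entries of $P$. Because $P=\bigoplus_n P_n$ is block-diagonal with respect to $X=\bigsqcup X_n$, the entry $P_{x,y}$ vanishes whenever $x$ and $y$ lie in different blocks. For $x,y\in X_n$, using $T_{x,y}\xi=(T(\delta_y\otimes\xi))(x)$ one obtains $P_{x,y}=|v_n(x)\rangle\langle v_n(y)|$, a rank-one operator on $\H_0$ of norm $\|v_n(x)\|\cdot\|v_n(y)\|=\sqrt{m_n(x)m_n(y)}$. In particular $\|P_{x,x}\|=m_n(x)$ for every $x\in X_n$.

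For the forward direction, suppose $P$ is a ghost. Fix $\varepsilon>0$ and choose a bounded $F\subseteq X$ with $\|P_{x,y}\|<\varepsilon$ off $F\times F$. Since the blocks $X_n$ are pairwise separated and each is finite, a bounded subset meets only finitely many of them; hence there exists $N$ with $F\cap X_n=\emptyset$ for $n>N$. Applying the ghost bound to the diagonal with $x\in X_n$ for $n>N$ yields $m_n(x)=\|P_{x,x}\|<\varepsilon$, so $\sup_{x\in X_n}m_n(x)\to 0$, meaning $\{(X_n,d_n,m_n)\}$ is ghostly. Conversely, assume $\lim_n\sup_x m_n(x)=0$. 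Given $\varepsilon>0$, pick $N$ with $\sup_x m_n(x)<\varepsilon$ for $n>N$, and set $F:=\bigsqcup_{n\le N}X_n$, which is finite and hence bounded in $X$. For $(x,y)\notin F\times F$, either $P_{x,y}=0$ (different blocks), or $x,y\in X_n$ for some $n>N$, in which case $\|P_{x,y}\|=\sqrt{m_n(x)m_n(y)}<\varepsilon$; so $P$ is a ghost.

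I do not expect a genuine obstacle here: the argument is routine bookkeeping of the block-diagonal structure together with the identity $\|P_{x,x}\|=m_n(x)$. The only nuance worth flagging explicitly is the translation between the notation $m_n(x)=\|P_n\delta_x\|^2$ and the coordinate form $\|v_n(x)\|^2$, which I would record up front before entering the main computation.
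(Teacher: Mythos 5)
Your proof is correct: the identification $\|P_{x,y}\|=\sqrt{m_n(x)m_n(y)}$ for $x,y$ in the same block (and $P_{x,y}=0$ otherwise), together with the observation that a bounded set meets only finitely many blocks of a sparse space, is exactly the routine verification the paper has in mind when it states that the proof is elementary and leaves it to the reader. The only point worth recording explicitly, as you do, is that $\|P_n\delta_x\|^2=\|v_n(x)\|^2=m_n(x)$ and that ghostliness of the sequence reduces to $\sup_{x\in X_n}m_n(x)\to 0$ since each $m_n$ is a probability measure.
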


\begin{defn}\label{technical consdition}
Let $X$ be a metric space with bounded geometry. We say that \emph{all sparse subspaces of $X$ yield only compact ghost projections in their Roe algebras} if for any sparse subspace $X'\subseteq X$, all ghost projections in $C^*(X') $ are compact.
\end{defn}

\begin{thm}[{\cite[Theorem 1.3]{rigidity_CBC}}]\footnote{We also refer the reader to \cite[Corollary 1.3]{BragaFarah19}.}\label{thm:rigidity_CBC}
Let $X$ and $Y$ be metric spaces with bounded geometry. If all sparse subspaces of $Y$ yield only compact ghost projections in their Roe algebras, then the following are equivalent:
\begin{enumerate}
  \item $X$ is coarsely equivalent to $Y$;
  \item $C^*_u(X)$ is stably $\ast$-isomorphic to $C^*_u(Y)$;
  \item $C^*_s(X)$ is $\ast$-isomorphic to $C^*_s(Y)$;
  \item $UC^*(X)$ is $\ast$-isomorphic to $UC^*(Y)$;
  \item $C^*(X)$ is $\ast$-isomorphic to $C^*(Y)$.
\end{enumerate}
\end{thm}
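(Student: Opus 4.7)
The plan is to establish $(1)\Rightarrow(k)$ for each $k\in\{2,3,4,5\}$ by a uniform ``coarse-to-operator'' construction, and then each $(k)\Rightarrow(1)$ by a ``spatial implementation'' argument using the ghost hypothesis. For $(1)\Rightarrow(2)$--$(5)$, a coarse equivalence $f:X\to Y$ with a chosen quasi-inverse $g:Y\to X$ lifts to a unitary $U:\ell^2(X;\H_0)\to\ell^2(Y;\H_0)$: partition $Y$ into the uniformly finite sets $g^{-1}(x)$, pick any bijection between $\{\delta_x\}\otimes(\text{basis of }\H_0)$ and $\{\delta_y:y\in g^{-1}(x)\}\otimes(\text{basis of }\H_0)$ for each $x\in X$, and assemble. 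Conjugation by $U$ preserves local compactness, preserves bounded ranks of matrix coefficients, and increases propagation by at most a constant depending on the control functions of $f$ and $g$; hence it sends each of the four algebras in Definition~\ref{defn:Roe algebra} to its counterpart over $Y$, yielding $(3)$, $(4)$, $(5)$. The remaining $(1)\Rightarrow(2)$ follows from $C^*_u(X)\otimes\K(\H_0)\cong C^*_s(X)$ combined with $(3)$.

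For the backward directions, the strategy (following \v{S}pakula-Willett and its refinements by Braga-Farah, Braga-Farah-Vignati and Braga-Chung-Li) is to take a given $\ast$-isomorphism $\Phi$ between algebras over $X$ and $Y$, and construct a unitary $U:\ell^2(X;\H_0)\to\ell^2(Y;\H_0)$ spatially implementing $\Phi$, that is, $\Phi(T)=UTU^*$. Once such a $U$ exists, the candidate coarse map $f:X\to Y$ is read off by locating, for each $x\in X$ and a fixed unit vector $\xi\in\H_0$, the essential support in $Y$ of the rank-one projection $\Phi(\delta_x\delta_x^*\otimes\xi\xi^*)$: this image is itself rank-one in the target algebra, so it suffices to show it is approximately concentrated on a bounded subset of $Y$, around which a point $y=f(x)$ can be picked. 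A symmetric argument applied to $\Phi^{-1}$ produces a coarse inverse $g$, and the finite-propagation approximants of $\Phi$ provide the upper control function for $f$.

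The ghost hypothesis on sparse subspaces of $Y$ is what forces the required concentration. Suppose, for contradiction, that for some $\xi\in(\H_0)_1$ and some sequence $x_n\in X$ the images $P_n:=\Phi(\delta_{x_n}\delta_{x_n}^*\otimes\xi\xi^*)$ fail to be approximable by projections of bounded support in $Y$. Then, after passing to a subsequence whose approximate supports in $Y$ are pairwise far apart, one can package these rank-one projections, modulo controlled perturbations, into a single operator over a sparse subspace $Y'\subseteq Y$ that is a \emph{non-compact} ghost projection in $C^*(Y')$; this contradicts the standing hypothesis on $Y$. Thus each $\Phi(\delta_x\delta_x^*\otimes\xi\xi^*)$ does concentrate, and the assignment $x\mapsto f(x)$ is well-defined.

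The main obstacle, I expect, is not the pointwise concentration just described but its \emph{uniformity} in $x$, which is what gives $f$ a coarse (as opposed to merely set-theoretic) structure and, in particular, yields a lower control function. The natural route is again by ghost extraction: a quantitative failure of uniformity would produce a sequence $x_n\in X$ with $d_X(x_n,x_{n'})\to\infty$ whose images under $\Phi$ witness progressively worse delocalisation, and the resulting block-rank-one operator on a sparse subspace of $Y$ must then be shown to be a non-compact ghost. Converting a qualitative ``no uniformity'' statement into a rigorous operator-theoretic witness — arranging that the $P_n$ truly assemble into a single element of $C^*(Y')$, that this element is a projection up to compacts, and that its non-compactness is preserved under the perturbations needed to make it block-diagonal — is the technically weightiest step, and it is exactly what the hypothesis on sparse subspaces of $Y$ is designed to rule out.
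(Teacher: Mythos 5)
Your overall strategy is the right one, and it is essentially the strategy this paper relies on: Theorem~\ref{thm:rigidity_CBC} is quoted verbatim from \cite{rigidity_CBC}, and the paper reproves only its generalisation, Proposition~\ref{prop:rigidity.Roe}, by exactly the scheme you sketch --- show the rank-one projections $\Phi(e_{(x,u),(x,u)})$ are uniformly concentrated by extracting, from a putative failure, a non-compact ghost projection on a sparse subspace of the target (Lemma~\ref{LemmaRankProjInBDelta} and Corollary~\ref{cor:3.5 in BCL}), and then read off a coarse embedding (Proposition~\ref{prop:4.5 in BCL}). You have also correctly identified the uniformity of the concentration as the technical heart.

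There is, however, a genuine gap in your passage from a coarse embedding to a coarse \emph{equivalence}. The ghost hypothesis is placed only on $Y$, and your ``symmetric argument applied to $\Phi^{-1}$'' would need to derive a contradiction from a non-compact ghost projection living on a sparse subspace of $X$ --- about which nothing is assumed. The missing step is the coarse invariance of the ghost condition: once the first half of the argument yields a coarse embedding $f:X\to Y$ (automatically uniformly finite-to-one by bounded geometry), one pushes forward would-be non-compact ghost projections on sparse subspaces of $X$ along $f$, via a covering isometry, to non-compact ghost projections on sparse subspaces of $Y$, thereby transferring the hypothesis to $X$; this is the content of Corollary~\ref{prop:coarse invariant} and Corollary~\ref{Cor4.6 in BCL} (Corollary~4.6 of \cite{rigidity_CBC}) and is an essential, not cosmetic, ingredient. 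Only after this transfer can the argument be run on $\Phi^{-1}$, and one must then still verify that the two coarse embeddings are mutually inverse up to closeness (this follows because both are read off from the same pair $\Phi,\Phi^{-1}$, not from two-way coarse embeddability alone, which does not imply coarse equivalence). A secondary point: in the forward direction, conjugating by a unitary assembled from ``any bijection'' of basis vectors need not preserve $C^*_s$, since the subspaces $U_{y,x}(H)$ for a fixed finite-dimensional $H\subseteq\H_0$ can together span an infinite-dimensional subspace; either choose the fibrewise identifications $\ell^2(g^{-1}(x);\H_0)\cong\H_0$ uniformly from a finite list, or deduce (3) from (2) via $C^*_u(\cdot)\otimes\K(\H_0)\cong C^*_s(\cdot)$ rather than the reverse.
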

As shown in \cite[Theorem~5.3]{rigidity_CBC},  all sparse subspaces of a metric space $Y$ yield only compact ghost projections in their Roe algebras if $Y$ satisfies various forms of the Baum-Connes conjecture. In particular, rigidity holds for those spaces.

\section{Rigidity}
In this section, we introduce new analytic conditions which guarantee rigidity for Roe-like algebras. More precisely, we define the following:
\begin{defn}\label{our geometric condition}
Let $X$ be a metric space with bounded geometry. We say that \emph{all sparse subspaces of $X$ contain no block-rank-one ghost projections in their Roe algebras (or in their uniform Roe algebras)} if $C^*(X')$ (or $C_u^*(X')$) contains no block-rank-one ghost projections for every sparse subspace $X'\subseteq X$.
 \end{defn}

In the remainder of this section, we will follow an approach as in \cite{rigidity_CBC} to show that the above new analytic condition concerning Roe algebras in Definition~\ref{our geometric condition} is already sufficient for rigidity. First of all, we need the following lemma which is a combination of \cite[Theorem~7.4]{BragaFarah19} and \cite[Lemma~3.1]{rigidity_CBC}.

\begin{lem}\label{LemmaRankProjInBDelta}
Let $(Y,d)$ be a metric space with bounded geometry, and assume that all sparse subspaces of $Y$ contain no block-rank-one ghost projections in their Roe algebras. Let $(p_n)_n$ be an orthogonal sequence of rank-one projections such that $\sum_{n\in M }p_n$ converges in the strong operator topology to an element in $C^*(Y)$ for all $M\subseteq \N$. Then
\[ \inf_{n\in \N}\sup\big\{ \|p_n\delta_y\otimes v\|: y\in Y,\ v\in (\H_0)_1 \big\}>0.\]
\end{lem}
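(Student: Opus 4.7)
The plan is to argue by contradiction: if the infimum equals $0$, I will construct a sparse subspace $X' \subseteq Y$ carrying a block-rank-one ghost projection in $C^*(X')$, violating the standing hypothesis on $Y$. The first step is a reformulation. Writing each rank-one projection as $p_n = |\xi_n\rangle\langle\xi_n|$ for a unit vector $\xi_n \in \ell^2(Y;\H_0)$, viewed as a function $y\mapsto \xi_n(y)\in\H_0$, a direct calculation gives $\|p_n(\delta_y\otimes v)\| = |\langle \xi_n(y),v\rangle|$, so the supremum in the statement equals $\sup_{y\in Y}\|\xi_n(y)\|$. The contradictory assumption then supplies a subsequence $(n_k)$ with $\varepsilon_k := \sup_y\|\xi_{n_k}(y)\|$ tending to $0$ arbitrarily fast; I will arrange it to be summable together with other parameters below.

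Next, I would inductively build the sparse subspace. Fix a summable sequence $\delta_k\to 0$ and radii $R_k\to\infty$. Having chosen pairwise disjoint finite sets $F_1,\dots,F_{k-1}$, set $V_k := \Nd_{R_k}(F_1\cup\cdots\cup F_{k-1})$, which is finite by bounded geometry of $Y$. Since $\|\chi_{V_k}\xi_n\|^2\le |V_k|\,\varepsilon_n^2 \to 0$, taking $n_k$ sufficiently large and then selecting a finite $F_k\subseteq Y\setminus V_k$ absorbing all but $\delta_k$ of the $\ell^2$-mass of $\xi_{n_k}$ yields $\|\chi_{F_k^c}\xi_{n_k}\|\le \delta_k$ together with $d(F_k,F_j)\ge R_k\to\infty$ for all $j<k$. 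Then $X':=\bigsqcup_k F_k$ is a sparse subspace of $Y$. Setting $a_k:=\chi_{F_k}\xi_{n_k}$, $\lambda_k:=\|a_k\|^2\ge 1-\delta_k^2$, $\eta_k:=a_k/\|a_k\|$, and $Q_k:=|\eta_k\rangle\langle\eta_k|$, the candidate projection is $Q:=\bigoplus_k Q_k$ on $\ell^2(X';\H_0)$. The associated measures satisfy $m_k(y)=\|\eta_k(y)\|^2\le \varepsilon_{n_k}^2/\lambda_k\to 0$ uniformly in $y$, so $Q$ is a ghost block-rank-one projection in the sense of Lemma~\ref{lem: ghost}.

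The main technical obstacle is then to verify that $Q$ genuinely lies in $C^*(X')$. The hypothesis applied to $M=\{n_k:k\in\N\}$ gives $P:=\sum_k p_{n_k}\in C^*(Y)$. Writing $b_k:=\chi_{F_k^c}\xi_{n_k}$ with $\|b_k\|\le \delta_k$ and expanding $p_{n_k}=|a_k+b_k\rangle\langle a_k+b_k|$, a straightforward bound yields $\|p_{n_k}-\chi_{F_k}p_{n_k}\chi_{F_k}\|\le 3\delta_k$. Since $\sum_k\delta_k<\infty$, the series $\sum_k(p_{n_k}-\chi_{F_k}p_{n_k}\chi_{F_k})$ converges absolutely in operator norm, and each summand is compact and hence lies in $C^*(Y)$, so the limit does too. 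Subtracting from $P$ shows that $\tilde P:=\sum_k \chi_{F_k}p_{n_k}\chi_{F_k}=\sum_k \lambda_k Q_k$ lies in $C^*(Y)$; since it is supported on $X'\times X'$, the compression $\chi_{X'}\tilde P\chi_{X'}=\tilde P$ lies in $C^*(X')$. Finally, the non-zero eigenvalues $\{\lambda_k\}$ of $\tilde P$ all lie in $[1-\delta_1^2,1]$ and are bounded away from $0$, so continuous functional calculus applied to an $f\in C_0((0,\infty))$ equal to $1$ on $[1-\delta_1^2,1]$ yields $Q=f(\tilde P)\in C^*(X')$. This block-rank-one ghost projection in $C^*(X')$ contradicts the standing hypothesis and completes the proof.
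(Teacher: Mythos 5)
Your proof is correct and follows essentially the same strategy as the paper's: the paper argues by deferring to \cite[Lemma~3.1]{rigidity_CBC}, whose core is exactly your construction --- assume the infimum vanishes, truncate a subsequence of the $p_n$ to well-separated finite sets obtained by bounded geometry, perturb the truncations (which differ from the $p_n$ by a norm-summable sequence of compacts) into a block-rank-one projection on the resulting sparse subspace via functional calculus, and observe that it is a ghost, contradicting the hypothesis. Your version is simply a self-contained write-up of the argument the paper cites, with the functional calculus applied once to $\widetilde{P}=\sum_k\lambda_kQ_k$ rather than blockwise, which is an immaterial difference.
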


\begin{proof}
The proof is almost the same as the one for \cite[Lemma~3.1]{rigidity_CBC}, except that we need the following claim instead of Claim 3.3 therein:
\begin{claim*}\label{Claim1}
By going to a subsequence of $(p_n)_n$, there exists a sequence $(Y_n)_n$ of disjoint finite non-empty subsets of $Y$ and a sequence of rank-one projections $(q_n)_n$ in $\cstr(Y)$ such that 
\begin{enumerate}
\item $d(Y_k,Y_m)\to \infty$ as $k+m\to \infty$ and $k\neq m$; 
\item $\|p_n-q_n\|<2^{-n}$.
\end{enumerate}
\end{claim*} 
The proof of this claim is very similar to Claim 3.3 in \cite[Lemma~3.1]{rigidity_CBC}, except that we require each $q_n$ to be rank-one rather than finite rank. But this follows automatically from the functional calculus construction $q_n=f(a)$ therein, where $a$ is a rank-one self-adjoint operator and $f$ is a continuous function on the spectrum of $a$ with $f(0)=0$. In particular, the rank of $q_n$ is bounded by one and exactly equals to one by (2). For more details we refer the reader to \cite[Lemma~3.1]{rigidity_CBC}.
\end{proof}

As a direct consequence of Lemma~\ref{LemmaRankProjInBDelta}, we obtain the following corollary (\emph{cf.} \cite[Corollary~3.5]{rigidity_CBC}):
\begin{cor}\label{cor:3.5 in BCL}
Let $X$ and $Y$ be metric spaces with bounded geometry, and assume that all sparse subspaces of $Y$ contain no block-rank-one ghost projections in their Roe algebras. If $A\subseteq C^*(X)$ is a Roe-like $C^*$-subalgebra, then every strongly continuous rank-preserving $*$-homomorphism $\Phi:A \rightarrow C^*(Y)$ is a \emph{rigid $*$-homomorphism} in the following sense:
\begin{align*}
\sup_{u\in (\H_0)_1}\inf_{x\in X}\sup_{y\in Y,\  v\in (\H_0)_1}\|\Phi(e_{(x,u),(x,u)})\delta_y\otimes v\|>0.
\end{align*}
\end{cor}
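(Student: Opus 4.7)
The plan is to argue by contradiction, following the template of \cite[Corollary~3.5]{rigidity_CBC} but feeding the output into Lemma~\ref{LemmaRankProjInBDelta} (whose conclusion already requires only rank-one, rather than finite-rank, projections). Suppose the stated supremum were zero; then for every $u \in (\H_0)_1$, and in particular for a fixed unit vector $u$, we would have $\inf_{x\in X} \sup_{y,v} \|\Phi(e_{(x,u),(x,u)})\delta_y \otimes v\| = 0$. I would thus extract a sequence $(x_n)_{n \in \N}$ in $X$ with
\[
\sup_{y \in Y,\ v \in (\H_0)_1} \bigl\|\Phi(e_{(x_n,u),(x_n,u)})\delta_y \otimes v\bigr\| < 2^{-n}.
\]
Since $\Phi$ is rank-preserving, $\Phi(e_{(x,u),(x,u)})$ is a nonzero (rank-one) projection for every $x \in X$, so after passing to a subsequence I may assume the $x_n$ are pairwise distinct.

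Next I would set $p_n := \Phi(e_{(x_n,u),(x_n,u)})$. Rank-preservation ensures each $p_n$ is a rank-one projection in $C^*(Y)$, and the pairwise orthogonality of the $e_{(x_n,u),(x_n,u)}$ (following from distinctness of the $x_n$) passes through the $*$-homomorphism $\Phi$ to yield pairwise orthogonality of $(p_n)$. The essential observation is that for any $M \subseteq \N$, the sum $\sum_{n \in M} e_{(x_n,u),(x_n,u)}$ converges in the strong operator topology to a zero-propagation projection with matrix entries in the one-dimensional subspace spanned by $u \otimes u^*$; this projection therefore lies in $C^*_s(X) \subseteq A$. Strong continuity of $\Phi$ on $A$ then gives that $\sum_{n \in M} p_n$ converges SOT to an element of $C^*(Y)$, for every choice of $M$.

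At this stage the hypotheses of Lemma~\ref{LemmaRankProjInBDelta} are met by the orthogonal sequence of rank-one projections $(p_n)$ in $C^*(Y)$, so
\[
\inf_{n \in \N}\, \sup\bigl\{\|p_n \delta_y \otimes v\| : y \in Y,\ v \in (\H_0)_1\bigr\} > 0,
\]
directly contradicting the choice of the $x_n$. The main technical hurdle, already discharged in the preparatory Lemma~\ref{LemmaRankProjInBDelta}, is the passage from an arbitrary strongly convergent orthogonal sequence of rank-one projections in $C^*(Y)$ to a sparse-subspace configuration on which the block-rank-one ghost hypothesis on $Y$ can be invoked. Once that lemma is in hand, the remaining work here is a packaging argument, with the two delicate points being the use of rank-preservation of $\Phi$ (to keep the $p_n$ genuinely rank-one, rather than merely finite-rank, as Lemma~\ref{LemmaRankProjInBDelta} demands) and the combination of $C^*_s(X) \subseteq A$ with strong continuity of $\Phi$ (to ensure that the orthogonal sums transfer to SOT-convergent sums in $C^*(Y)$).
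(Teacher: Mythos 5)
Your proposal is correct and follows essentially the same route as the paper, which deduces the corollary directly from Lemma~\ref{LemmaRankProjInBDelta} along the lines of \cite[Corollary~3.5]{rigidity_CBC}: negate the rigidity condition, extract pairwise distinct $x_n$ with $\sup_{y,v}\|\Phi(e_{(x_n,u),(x_n,u)})\delta_y\otimes v\|<2^{-n}$, note that each $\sum_{n\in M}e_{(x_n,u),(x_n,u)}$ is a zero-propagation projection with entries in the one-dimensional subspace $\C u$ and hence lies in $C^*_s(X)\subseteq A$, push it through the strongly continuous rank-preserving $\Phi$, and contradict the lemma. The two points you flag — rank-preservation forcing the $p_n$ to be genuinely rank-one, and $C^*_s(X)\subseteq A$ plus strong continuity giving SOT-convergence of $\sum_{n\in M}p_n$ in $C^*(Y)$ — are exactly the hypotheses the paper's Lemma~\ref{LemmaRankProjInBDelta} is tailored to.
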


The next proposition slightly extends \cite[Theorem 4.5]{rigidity_CBC}.

\begin{prop}\label{prop:4.5 in BCL}
Let $X$ and $Y$ be metric spaces with bounded geometry, and assume that all sparse subspaces of $Y$ contain no block-rank-one ghost projections in their Roe algebras. Let $A\subseteq C^*(X)$ and $B\subseteq C^*(Y)$ be Roe-like $C^*$-algebras such that either $B=C^*_s(Y)$ or $\mathrm{UC}^*(Y)\subseteq B$. If $A$ embeds onto a hereditary $C^*$-subalgebra of $B$, then $X$ coarsely embeds into $Y$.
\end{prop}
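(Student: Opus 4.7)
My plan is to follow the strategy of the original \cite[Theorem~4.5]{rigidity_CBC} closely, identifying the single step where their stronger ghost-projection hypothesis was invoked and replacing it by our Corollary~\ref{cor:3.5 in BCL}. The only place in the argument of \cite[Theorem~4.5]{rigidity_CBC} where their hypothesis (that all sparse subspaces of $Y$ yield only compact ghost projections in their Roe algebras) is used is through \cite[Corollary~3.5]{rigidity_CBC}, which produces rigidity of strongly continuous rank-preserving $*$-homomorphisms. Our Corollary~\ref{cor:3.5 in BCL} delivers the same conclusion under our weaker assumption, so substituting it allows the rest of the proof to proceed essentially verbatim.

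First I would use the given $*$-isomorphism $\Phi : A \to B_0$ onto a hereditary $C^*$-subalgebra $B_0 \subseteq B$ to produce a strongly continuous, rank-preserving $*$-homomorphism $\tilde\Phi$ into $C^*(Y)$ to which Corollary~\ref{cor:3.5 in BCL} applies. The standard route is to extend the hereditary embedding to the multiplier algebras $M(A) \to M(B_0)$ (which is automatically strongly continuous), and then compose with the inclusion $M(B_0) \hookrightarrow \B(\ell^2(Y;\H_0))$. The dichotomy on $B$ (either $B = C^*_s(Y)$ or $UC^*(Y) \subseteq B$) is what ensures rank-preservation: rank-one projections of the form $e_{(x,u),(x,u)} \in C^*_s(X) \subseteq A$ must be sent to rank-one projections in $\B(\ell^2(Y;\H_0))$, a fact forced by the internal matrix-unit structure of $B$ in either case of the dichotomy, exactly as in \cite[Theorem~4.5]{rigidity_CBC}.

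Next I would invoke Corollary~\ref{cor:3.5 in BCL} for $\tilde\Phi$, which yields, for every unit vector $u\in \H_0$, some $x \in X$ and a constant $c>0$ with
\[
\sup_{y \in Y,\ v \in (\H_0)_1} \bigl\| \tilde\Phi(e_{(x,u),(x,u)}) \delta_y \otimes v \bigr\| \geq c.
\]
Choosing such $y = f(x)$ for each $x \in X$ defines a map $f : X \to Y$. To verify it is a coarse embedding, one approximates elements of $A$ by finite-propagation operators in $C^*_s(X)$ and tracks their images under $\tilde\Phi$: estimates on matrix coefficients of $\tilde\Phi(e_{(x,u),(x',u')})$ when $d_X(x,x')$ is small (resp. large) translate into the required upper (resp. lower) control on $d_Y(f(x),f(x'))$, producing non-decreasing unbounded $\rho_\pm$ as in the definition of coarse embedding.

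The main obstacle I expect is the rank-preservation of $\tilde\Phi$ under the dichotomy on $B$, together with the bookkeeping required to turn the pointwise rigidity estimate into the two-sided coarse control for $f$. Both of these are technical rather than conceptual: once the substitution of Corollary~\ref{cor:3.5 in BCL} for \cite[Corollary~3.5]{rigidity_CBC} is made, the remainder of the argument is a direct adaptation of \cite[Theorem~4.5]{rigidity_CBC}, and no further use of the hypothesis on sparse subspaces of $Y$ is needed.
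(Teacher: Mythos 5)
Your proposal matches the paper's proof: the paper likewise obtains strong continuity and rank-preservation of the hereditary embedding from \cite[Lemma~4.1]{rigidity_CBC} (stated there directly for embeddings onto hereditary subalgebras, rather than via the multiplier-algebra route you sketch), substitutes Corollary~\ref{cor:3.5 in BCL} for \cite[Corollary~3.5]{rigidity_CBC} to conclude rigidity of $\Phi$, and then cites \cite[Lemma~4.2 and Lemma~4.4]{rigidity_CBC} to extract the coarse embedding, exactly as you describe. One small caveat on your write-up: the rigidity conclusion reads ``\emph{there exists} a unit vector $u$ and $c>0$ such that \emph{for all} $x\in X$ the supremum over $y,v$ exceeds $c$'' (not ``for every $u$, some $x$''), which is the quantifier order you in fact need in order to define $f$ on all of $X$.
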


\begin{proof}
Let $\Phi:A\to B$ be an embedding onto a hereditary $C^*$-subalgebra of $B$. By \cite[Lemma~4.1]{rigidity_CBC}, $\Phi$ is strongly continuous and rank-preserving.  By Corollary~\ref{cor:3.5 in BCL}, $\Phi$ is a rigid $*$-homomorphism. It follows from \cite[Lemma~4.2 and Lemma~4.4]{rigidity_CBC} that $\Phi$ induces a coarse embedding from $X$ into $Y$ (see also the proof of \cite[Theorem~4.5]{rigidity_CBC} for more details).
\end{proof}

We also need the following key proposition:
\begin{prop}\label{prop:general case of projections in Roe}
Let $f: (X,d_X) \to (Y,d_Y)$ be a finite-to-one coarse map between metric spaces with bounded geometry and $\H_0$ be an infinite-dimensional separable Hilbert space. Assume that there exist a sparse subspace $X_0=\bigsqcup_{n\in \N} X_n$ of $X$ and a block-rank-one projection $P=\bigoplus_{n\in \N} P_n \in \B(\ell^2(X_0;\H_0))$ with respect to $\{X_n\}_{n\in \N}$ such that $P\in C^*(X_0)$.

Let $m_n$ be the associated probability measure on $X_n$ given by $m_n(x)=||P_n\delta_x||^2$ for $x\in X_n$. Denote $f_n:=f|_{X_n}$ and $d_n:=d_X|_{X_n}$. If the sequence $\{f_n:(X_n,d_n,m_n) \to Y\}_{n\in \N}$ is a measured weak embedding, then there exist a sparse subspace $Y' \subseteq Y$ and a block-rank-one ghost projection in $C^*(Y')$.
\end{prop}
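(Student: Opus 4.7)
The plan is to construct the desired ghost projection by pushing forward a suitably truncated version of $P$ along $f$, exploiting that the infinite-dimensional separability of $\H_0$ lets one absorb the fibre multiplicity of $f$ into an internal $\ell^2(\N)$-coordinate. Write the unit eigenvector of $P_n$ as $\xi_n=\sum_{x\in X_n}\delta_x\otimes v_{n,x}$ with $\|v_{n,x}\|^2=m_n(x)$, and set $\nu_n:=(f_n)_*m_n$; the measured weak embedding hypothesis then reads $\sup_{y\in Y}\nu_n(B(y,R))\to 0$ as $n\to\infty$ for each $R>0$. Two obstructions must be dealt with: first, the images $f(X_n)$ need not assemble into a sparse subspace of $Y$ (because $f$ is only coarse, not necessarily a coarse embedding on $X_0$); second, we must land in $C^*(Y')$ for some sparse $Y'\subseteq Y$, not merely in $C^*(Y)$.

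To resolve the first issue, I will simultaneously pass to a subsequence and truncate. Inductively pick $n_1<n_2<\cdots$ and subsets $A_{n_k}\subseteq X_{n_k}$ as follows. Having chosen $A_{n_1},\dots,A_{n_{k-1}}$, let $E_k:=\bigcup_{j<k}f(A_{n_j})$ be the (finite) union of previous images; using that $E_k$ is finite and that $\sup_y\nu_n(B(y,k))\to 0$ as $n\to\infty$, pick $n_k>n_{k-1}$ with $\nu_{n_k}(\Nd_k(E_k))<\tfrac12$, and set $A_{n_k}:=X_{n_k}\setminus f^{-1}(\Nd_k(E_k))$. By construction, $d(f(A_{n_k}),f(A_{n_l}))>\min(k,l)$ for all $k\ne l$, so $Y':=\bigsqcup_k f(A_{n_k})$ is a sparse subspace of $Y$; simultaneously $m_{n_k}(A_{n_k})>\tfrac12$, securing uniform mass retention.

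With $A:=\bigsqcup_kA_{n_k}\subseteq X_0$, the compression $\chi_A P\chi_A$ lies in $C^*(X_0)$ (since $\chi_A$ is a zero-propagation multiplier preserving finite-propagation locally compact operators) and is block-diagonal with $k$-th block a rank-one operator of eigenvalue $\lambda_{n_k}:=m_{n_k}(A_{n_k})\in[\tfrac12,1]$. Choosing $\phi\in C([0,1])$ with $\phi(0)=0$ and $\phi\equiv 1$ on $[\tfrac12,1]$, continuous functional calculus produces $\tilde P:=\phi(\chi_A P\chi_A)\in C^*(X_0)$, namely the block-rank-one projection $\bigoplus_k P'_{n_k}$ supported on $A$, where $P'_{n_k}$ projects onto $\zeta_{n_k}/\|\zeta_{n_k}\|$ with $\zeta_{n_k}:=\chi_{A_{n_k}}\xi_{n_k}$. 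The $\lambda_{n_k}\geq\tfrac12$ spectral gap is exactly what enables $\phi$ to pass from a compression to a genuine projection.

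Finally, fix an injection $j\colon X\to\N$ whose restriction to each fibre $f^{-1}(y)$ is injective (possible because $f$ is finite-to-one) and define the isometry $V\colon\ell^2(A;\H_0)\to\ell^2(Y';\H_0\otimes\ell^2(\N))\cong\ell^2(Y';\H_0)$ by $V(\delta_x\otimes v):=\delta_{f(x)}\otimes(v\otimes e_{j(x)})$. Since $f$ is coarse, $T\mapsto VTV^*$ sends finite-propagation locally compact operators to the same (with propagation bounded by $\rho_+(\ppg(T))$), hence $C^*(A)\to C^*(Y')$. So $Q:=V\tilde P V^*\in C^*(Y')$ is a block-rank-one projection for the sparse decomposition $Y'=\bigsqcup_k f(A_{n_k})$, and a direct computation yields its associated measure $\tilde m_k(y)=\lambda_{n_k}^{-1}m_{n_k}(A_{n_k}\cap f^{-1}(y))\leq 2\nu_{n_k}(y)$ for $y\in f(A_{n_k})$; since $\sup_y\nu_{n_k}(y)\to 0$ as $k\to\infty$ (the measured weak embedding at $R=0$), $Q$ is a ghost by Lemma~\ref{lem: ghost}. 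The main obstacle is the inductive truncation step: sparseness of $\{f(A_{n_k})\}$ and the $\geq\tfrac12$ mass retention must be arranged at once, and it is precisely the measured weak embedding (rather than just the coarseness of $f$) that makes both possible.
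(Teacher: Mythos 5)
Your proof is correct, and its core mechanism -- push $P$ forward along a covering isometry for $f$ and read off the associated measure of the image projection as $y\mapsto m_{n}(f_{n}^{-1}(y))$ (up to normalisation), so that the measured weak embedding condition delivers the ghost property via Lemma~\ref{lem: ghost} -- is exactly the paper's. The one place you genuinely diverge is the extraction of the sparse subspace $Y'$, and there you work harder than necessary: since $f$ is finite-to-one and $Y$ has bounded geometry, $f^{-1}(\Nd_k(E_k))$ is a \emph{finite} subset of $X$ and hence meets only finitely many of the pairwise disjoint blocks $X_n$, so one can simply choose $n_k$ with $X_{n_k}$ entirely disjoint from it. This is what the paper does: it passes to a subsequence of \emph{whole} blocks with $d_Y(f(X_n),f(X_m))\to\infty$, so the compression $\chi_{X'}P\chi_{X'}$ is already a block-rank-one projection and no mass is lost -- your truncation, the $\lambda_{n_k}\geq\tfrac12$ spectral-gap argument, and the functional calculus step all evaporate, and in particular the measured weak embedding is not needed at this stage (only at the very end, for the ghost property). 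Your version is still valid, but note two small points: the separation your construction actually yields is $d(f(A_{n_k}),f(A_{n_l}))>\max(k,l)$ (since for $l<k$ one has $f(A_{n_l})\subseteq E_k$ and $d(f(A_{n_k}),E_k)>k$); the bound $>\min(k,l)$ that you state would \emph{not} by itself give sparseness, so you should record the stronger estimate. Also, the appeal to ``the measured weak embedding at $R=0$'' should be phrased via any fixed $R>0$, since $m_n(f_n^{-1}(y))\leq m_n(f_n^{-1}(B(y,R)))$; this is harmless but worth making explicit.
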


\begin{rem}
  We do not know whether the conclusion holds also in the case when $\H_0$ is finite-dimensional.
\end{rem}
  
\begin{proof}
Since the map $f$ is finite-to-one and $Y$ has bounded geometry, it is easy to show that there is a subsequence $\{X_n\}_{n\in M}$ such that $d_Y(f(X_n),f(X_m)) \to \infty$ as $n+m \to \infty$ for $n \neq m$, where $M$ is an infinite subset of $\N$. If we take $Y_n:=f(X_n)$ for each $n\in M$ and $Y':=\bigsqcup_{n\in M}Y_n$ to be their disjoint union, then $Y'$ is a sparse subspace of $Y$. We consider the sparse subspace $X':=\bigsqcup_{n\in M} X_n$ and the block-rank-one projection $P':=\bigoplus_{n\in M} P_n \in \B(\ell^2(X';\H_0))$. Since $P'=\chi_{X'}P\chi_{X'}$ and $\chi_{X'}$ has finite propagation, we deduce that $P'\in C^*(X')$.

Let $\H_{X'}:=\ell^2(X';\H_0) \cong \ell^2(X') \otimes \H_0$ and $\H_{Y'}:=\ell^2(Y'; \H_{X'}) \cong \ell^2(Y') \otimes \ell^2(X') \otimes \H_0$. Similarly to the proof of \cite[Proposition 6.4]{structure}, we construct an isometry $V: \H_{X'} \to \H_{Y'}$ covering the coarse map $f':=f|_{X'}: X' \to Y'$ in the sense that $\supp (V) \subseteq \{(f'(x),x)\in Y'\times X' ~|~ x \in X'\}$, where $\supp (V):=\{(y,x)\in Y'\times X' ~|~ V_{y,x}\neq 0\}$.

Indeed, for each $y \in Y'$ we define an isometry $V_y: \ell^2(f'^{-1}(y)) \otimes \H_0 \to \C \delta_{y}\otimes \ell^2(X') \otimes \H_0$ by the formula
\[
\delta_z \otimes \xi \mapsto \delta_{f'(z)} \otimes \delta_z \otimes \xi = \delta_y \otimes \delta_z \otimes \xi, \ \text{for $z\in f'^{-1}(y)$ and $\xi\in \H_0$}.
\]
Since $\H_{X'}=\bigoplus_{y\in Y'} \left(\ell^2(f'^{-1}(y)) \otimes \H_0 \right)$ and $\H_{Y'}=\bigoplus_{y\in Y'}\left( \C \delta_{y}\otimes \ell^2(X') \otimes \H_0 \right)$, we can  define $V:= \bigoplus_{y\in Y'} V_y: \H_{X'}\longrightarrow \H_{Y'}$. By the construction of $V$, we clearly have that $\supp (V) \subseteq \{(f'(x),x)\in Y'\times X' ~|~ x \in X'\}$.

As $V$ is a covering isometry for the coarse map $f'$, it follows that the isometry $V$ induces a $*$-homomorphism $\mathrm{Ad}_V: C^*(X') \to C^*(Y')$ given by $T \mapsto VTV^*$ (see also \cite[Lemma~5.1.12 and Remark~5.1.13]{WY20}). Hence, $Q:= VP'V^*$ is  a projection in the Roe algebra $C^*(Y')$. As $f'=\bigsqcup_{n\in M} f_n$, we have that $V=\bigoplus_{n\in M}V_n$ where
\begin{align*}
V_n:=\bigoplus_{y\in Y_n}V_y: \ell^2(X_n; \H_0)\to \ell^2(Y_n; \H_{X'}).
\end{align*}
Thus $Q_n:=V_nP_nV_n^*$ is a rank-one projection in $\B(\ell^2(Y_n;\H_{X'}))$ and $Q=\bigoplus_{n\in M}Q_n$ is block-rank-one with respect to $\{Y_n\}_{n\in M}$.

Finally, we verify that $Q$ is a ghost. A direct calculation shows that for any $y,z \in Y_n$, we have that $(Q_n)_{y,z}=\chi_{f_n^{-1}(y)}P_n\chi_{f_n^{-1}(z)}$. Moreover,
\begin{align}\label{formula of measure on Yn}
\|(Q_n)_{y,z}\|=\sqrt{m_n(f_n^{-1}(y))\cdot m_n(f_n^{-1}(z))}.
\end{align}
Thus, it follows easily that $Q$ is a ghost as $\{f_n:(X_n,d_n,m_n) \to Y\}_{n\in M}$ forms a measured weak embedding.  
\end{proof}

Combining Proposition~\ref{prop:general case of projections in Roe} with Lemma~\ref{lem:measured.weak.embedding} and Lemma~\ref{lem: ghost}, we obtain the following (\emph{cf.} \cite[Theorem~7.6 and Remark~7.8]{braga2019embeddings}):

\begin{cor}\label{prop:coarse invariant}
Let $X$ and $Y$ be metric spaces with bounded geometry, and let $f: X \to Y$ be a uniformly finite-to-one coarse map. If all sparse subspaces of $Y$ contain no block-rank-one ghost projections in their Roe algebras, then the same holds for $X$.
\end{cor}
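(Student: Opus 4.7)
The plan is to prove Corollary~\ref{prop:coarse invariant} by contraposition: starting from a block-rank-one ghost projection in the Roe algebra of some sparse subspace of $X$, I would transport it along the map $f$ to produce one for a sparse subspace of $Y$, thereby violating the hypothesis on $Y$.

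The concrete steps are as follows. Suppose there exists a sparse subspace $X_0=\bigsqcup_{n\in\N}X_n$ of $X$ together with a block-rank-one ghost projection $P=\bigoplus_n P_n\in C^*(X_0)$. Let $m_n(x):=\|P_n\delta_x\|^2$ be the associated probability measures. First I would apply Lemma~\ref{lem: ghost} to translate the ghost condition on $P$ into the statement that the sequence $\{(X_n,d_n,m_n)\}_{n\in\N}$ of finite measured metric spaces is ghostly in the sense of Definition~\ref{def:ghostly}. Next, since $f$ is uniformly finite-to-one with some uniform bound $N$, each restriction $f_n:=f|_{X_n}$ is uniformly finite-to-one with the same constant $N$, and the whole family $\{f_n\}_{n\in\N}$ is coarse with the same control function as $f$. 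Because the target $Y$ has bounded geometry, the second bullet of Lemma~\ref{lem:measured.weak.embedding} then applies to the ghostly sequence $\{(X_n,d_n,m_n)\}_{n\in\N}$, yielding that $\{f_n:(X_n,d_n,m_n)\to Y\}_{n\in\N}$ is a measured weak embedding.

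At this point all hypotheses of Proposition~\ref{prop:general case of projections in Roe} are satisfied: $f$ is finite-to-one, both $X$ and $Y$ have bounded geometry, $P\in C^*(X_0)$ is block-rank-one, and the restricted family is a measured weak embedding. Invoking the proposition, one obtains a sparse subspace $Y'\subseteq Y$ together with a block-rank-one ghost projection lying in $C^*(Y')$. This directly contradicts the standing hypothesis that no sparse subspace of $Y$ carries a block-rank-one ghost projection in its Roe algebra, completing the contrapositive argument.

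I do not expect any genuine obstacle here, since the statement is essentially a formal consequence of the three quoted results; the only point that needs a moment of care is checking that uniform finite-to-oneness of a \emph{single} map $f$ passes to uniform finite-to-oneness of the \emph{sequence} $\{f_n\}$ with a common constant, which is immediate from the definition, and that one may apply Lemma~\ref{lem:measured.weak.embedding} in the form that uses bounded geometry of $Y$ rather than uniformly bounded geometry of the $X_n$.
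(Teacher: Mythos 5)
Your argument is correct and is exactly the route the paper takes: the paper derives this corollary by "combining Proposition~\ref{prop:general case of projections in Roe} with Lemma~\ref{lem:measured.weak.embedding} and Lemma~\ref{lem: ghost}", which is precisely your chain (ghost projection $\Rightarrow$ ghostly measures $\Rightarrow$ measured weak embedding via the bounded-geometry-of-$Y$ bullet $\Rightarrow$ block-rank-one ghost projection on a sparse subspace of $Y$), run in contrapositive form. No gaps; the point you flag about passing uniform finite-to-oneness from $f$ to the sequence $\{f_n\}$ is indeed immediate.
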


As every coarse embedding from a metric space with bounded geometry into another metric space is uniformly finite-to-one, Proposition~\ref{prop:4.5 in BCL} and Corollary~\ref{prop:coarse invariant} together give rise to the following corollary (\emph{cf.} \cite[Corollary~4.6]{rigidity_CBC}):

\begin{cor}\label{Cor4.6 in BCL}
Let $X$ and $Y$ be metric spaces with bounded geometry, and assume that all sparse subspaces of $Y$ contain no block-rank-one ghost projections in their Roe algebras. Let $A\subseteq C^*(X)$ and $B\subseteq C^*(Y)$ be Roe-like $C^*$-algebras such that either $B=C_s^*(Y)$ or $UC^*(Y)\subseteq B$. If $A$ embeds onto a hereditary $C^*$-subalgebra of $B$, then all sparse subspaces of $X$ contain no block-rank-one ghost projections in their Roe algebras.
\end{cor}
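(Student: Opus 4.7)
The statement is essentially a combination of the two preceding results, so my proof proposal is short and routine. The plan is to chain Proposition~\ref{prop:4.5 in BCL} with Corollary~\ref{prop:coarse invariant}: the first produces a coarse embedding $X \hookrightarrow Y$ from the $C^*$-algebraic data, and the second transfers the hypothesis on ghost block-rank-one projections back from $Y$ to $X$ along this embedding.

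More concretely, first I would invoke Proposition~\ref{prop:4.5 in BCL}. Its hypotheses match the present ones exactly: $Y$ satisfies the no-block-rank-one-ghost condition on sparse subspaces, the Roe-like subalgebras $A \subseteq C^*(X)$ and $B \subseteq C^*(Y)$ are as required (with $B = C^*_s(Y)$ or $\mathrm{UC}^*(Y) \subseteq B$), and $A$ embeds onto a hereditary $C^*$-subalgebra of $B$. The conclusion is that there exists a coarse embedding $f \colon X \to Y$.

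Next, I would use the standing fact, recalled in the paragraph preceding the corollary, that every coarse embedding between metric spaces of bounded geometry is automatically uniformly finite-to-one; this is immediate from the definition of bounded geometry combined with the lower control $\rho_-$ of the coarse embedding (preimages of points sit in balls of uniformly bounded radius, hence have uniformly bounded cardinality). Thus $f \colon X \to Y$ is a uniformly finite-to-one coarse map between bounded geometry spaces.

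Finally, I would apply Corollary~\ref{prop:coarse invariant} to $f$: since all sparse subspaces of $Y$ contain no block-rank-one ghost projections in their Roe algebras, the same conclusion holds for all sparse subspaces of $X$, which is exactly the desired statement. There is no real obstacle here; the non-trivial content has already been packaged into Proposition~\ref{prop:4.5 in BCL} (which in turn rests on Corollary~\ref{cor:3.5 in BCL} and Lemma~\ref{LemmaRankProjInBDelta}) and Corollary~\ref{prop:coarse invariant} (which rests on Proposition~\ref{prop:general case of projections in Roe} together with Lemma~\ref{lem:measured.weak.embedding} and Lemma~\ref{lem: ghost}). The proof is therefore only the two-line concatenation described above.
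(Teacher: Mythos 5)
Your proposal is correct and is exactly the paper's own argument: the corollary is stated as a direct consequence of Proposition~\ref{prop:4.5 in BCL} (producing the coarse embedding) and Corollary~\ref{prop:coarse invariant} (transferring the no-block-rank-one-ghost condition along the uniformly finite-to-one coarse embedding). Nothing further is needed.
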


Finally, we reach the main result of this section.
\begin{prop}\label{prop:rigidity.Roe}
Let $X$ and $Y$ be metric spaces with bounded geometry. Assume that all sparse subspaces of $Y$ contain no block-rank-one ghost projections in their Roe algebras. Then the following are equivalent:
\begin{enumerate}
  \item $X$ is coarsely equivalent to $Y$;
  \item $C^*_u(X)$ is stably $\ast$-isomorphic to $C^*_u(Y)$;
  \item $C^*_s(X)$ is $\ast$-isomorphic to $C^*_s(Y)$;
  \item $UC^*(X)$ is $\ast$-isomorphic to $UC^*(Y)$;
  \item $C^*(X)$ is $\ast$-isomorphic to $C^*(Y)$.
\end{enumerate}
\end{prop}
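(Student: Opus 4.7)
The strategy mirrors the proof of Theorem~\ref{thm:rigidity_CBC} (i.e.\ \cite[Theorem~1.3]{rigidity_CBC}), with our new analytic hypothesis substituted for the compact-ghost-projection condition throughout. The direction $(1) \Rightarrow (5)$ is standard: a coarse equivalence between bounded-geometry spaces lifts to a covering isometry on $\ell^2$-spaces, whose adjoint action simultaneously implements $\ast$-isomorphisms between the corresponding Roe-like algebras of $X$ and $Y$ in Definition~\ref{defn:Roe algebra}, yielding $(3), (4)$ and $(5)$. Because $C^*_s(X) \cong C^*_u(X) \otimes \mathcal{K}(\H_0)$ canonically, the implication $(1) \Rightarrow (2)$ and the equivalence $(2) \Leftrightarrow (3)$ follow immediately.

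For the reverse implications $(3), (4), (5) \Rightarrow (1)$, I would handle all three cases uniformly. In each case one has a $\ast$-isomorphism $\Phi \colon A \to B$ with $(A,B)$ equal to $(C^*_s(X), C^*_s(Y))$, $(UC^*(X), UC^*(Y))$, or $(C^*(X), C^*(Y))$. In every instance, both $A$ and $B$ satisfy the hypothesis of Proposition~\ref{prop:4.5 in BCL} (that either $B = C^*_s(Y)$ or $UC^*(Y) \subseteq B$, and symmetrically for $A$). First, applying Proposition~\ref{prop:4.5 in BCL} to $\Phi$, viewed as an embedding onto the hereditary subalgebra $B$ of itself, produces a coarse embedding $f \colon X \to Y$. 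Second, by Corollary~\ref{Cor4.6 in BCL} the hypothesis on $Y$ transfers to $X$: all sparse subspaces of $X$ also contain no block-rank-one ghost projections in their Roe algebras. Third, applying Proposition~\ref{prop:4.5 in BCL} to $\Phi^{-1}$ in the reverse direction produces a coarse embedding $g \colon Y \to X$.

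The final task is to upgrade the pair $(f, g)$ to an actual coarse equivalence, which I expect to be the main conceptual subtlety since, in general, two bounded-geometry spaces coarsely embedded into each other need not be coarsely equivalent. Here one exploits that $\Phi$ is surjective, not merely an embedding: tracing the construction of $f$ through \cite[Lemmas 4.2 and 4.4]{rigidity_CBC}, every point $y \in Y$ is detected by a rank-one projection in $B$ supported near $y$, whose preimage under $\Phi$ is supported near some $x \in X$ for which $f(x)$ lies within bounded distance of $y$; consequently $f(X)$ is cobounded in $Y$. Equivalently, since $f$ and $g$ arise from $\Phi$ and $\Phi^{-1}$, the compositions $g \circ f$ and $f \circ g$ must be uniformly close to the identities on $X$ and $Y$, promoting $f$ to a coarse equivalence. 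This upgrade is already handled in the proofs of \cite[Theorems~4.5 and~1.3]{rigidity_CBC}, so the only new content needed here beyond importing that machinery is Corollary~\ref{Cor4.6 in BCL}, which itself rests on Proposition~\ref{prop:general case of projections in Roe}.
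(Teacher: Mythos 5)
Your proposal is correct and follows essentially the same route as the paper: the authors' proof is precisely to run the argument of \cite[Theorem~1.3]{rigidity_CBC} verbatim, substituting Corollary~\ref{Cor4.6 in BCL} for \cite[Corollary~4.6]{rigidity_CBC} and Corollary~\ref{cor:3.5 in BCL} for \cite[Corollary~3.5]{rigidity_CBC}, which is exactly the substitution you describe (your appeal to Proposition~\ref{prop:4.5 in BCL} subsumes the use of Corollary~\ref{cor:3.5 in BCL}). Your identification of the coboundedness/coarse-inverse upgrade as the point where surjectivity of $\Phi$ enters matches how that step is handled in \cite{rigidity_CBC}.
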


\begin{proof}
We follow exactly the same proof of \cite[Theorem 1.3]{rigidity_CBC} except we use Corollary~\ref{Cor4.6 in BCL} instead of \cite[Corollary~4.6]{rigidity_CBC} and use Corollary~\ref{cor:3.5 in BCL} instead of \cite[Corollary~3.5]{rigidity_CBC}. Hence, we decide not to repeat the proof word for word.
\end{proof}

\begin{rem}
For a metric space of bounded geometry, it is clear that if all sparse subspaces yield only compact ghost projections in their Roe algebras, then they also do not contain any block-rank-one ghost projection in their Roe algebras. Thus, we have formally generalised \cite[Theorem~1.3]{rigidity_CBC}, and all of the four cases in \cite[Theorem~5.3]{rigidity_CBC} can be included in the setting of Proposition~\ref{prop:rigidity.Roe}.

An advantage of using block-rank-one ghost projections (rather than arbitrary non-compact ghost projections) is that we have a geometric characterisation proved in Corollary~\ref{cor:geometric condition characterisation}, which formulates in terms of measured asymptotic expanders. More precisely, we will show that both conditions concerning Roe algebras and uniform Roe algebras in Definition~\ref{our geometric condition} are equivalent to that $X$ contains no sparse subspaces consisting of ghostly measured asymptotic expanders. Studying measured asymptotic expanders allows us to produce new examples of rigid spaces (see Section~\ref{sec:examples} for details).
\end{rem}

\section{Measured asymptotic expanders}\label{sec:MAE}
In this section, we begin to characterise quasi-locality of a block-rank-one projection by means of measured asymptotic expanders, which were introduced in \cite[Definition~6.1]{dypartI}. The precise statement can be found in Proposition~\ref{prop: measured asymptotic expanders}, which generalises \cite[Theorem~3.11]{Intro} from counting measures to general probability measures. Afterwards, we also establish a structure theorem for measured asymptotic expanders (Corollary~\ref{cor:MAE to ME}), which is our main technical tool to prove Theorem~\ref{thm: main result}.

\subsection{Quasi-locality for block-rank-one projections}
In this subsection, we connect measured asymptotic expanders with the quasi-locality of block-rank-one projections. In \cite{structure,Intro}, we studied quasi-locality of averaging projections, which led to introducing asymptotic expanders, and also showed that an averaging projection is quasi-local \emph{if and only if} it belongs to the uniform Roe algebra (see \cite[Theorem 6.1]{structure}).

Recall that bounded operators which can be approximated by operators with finite propagation are always quasi-local in the following sense:
\begin{defn}\label{defn:quasi-local}
Let $X$ be a discrete metric space, $\H_0$ be a Hilbert space and $T \in \B(\ell^2(X;\H_0))$. We say that $T$ is \emph{quasi-local}\footnote{The notion of quasi-locality was first introduced by Roe in \cite[Part I, Section~5]{Roe88} and \cite[Remark on page 20]{MR1399087}, and we refer readers to \cite{Intro, LWZ19, ST19, SZ18} for more details.} if for any $\varepsilon>0$ there exists some $R>0$ such that for any $A,B \subseteq X$ with $d(A,B) >R$, we have $\|\chi_A T \chi_B\| <\varepsilon$. 
\end{defn}

Throughout this subsection, we fix a sequence of finite metric spaces $\{(X_n,d_n)\}_{n\in \N}$. Let $(X,d)$ be their coarse disjoint union and $\H_0$ be a Hilbert space. Recall from Definition \ref{defn:block rank one projection} that a block-rank-one projection with respect to $\{X_n\}_{n\in \N}$ is an infinite rank projection of the form 
$$P=\bigoplus_{n\in \N} P_n \in \B \left(\bigoplus_{n\in \N}\ell^2(X_n;\H_0)\right)=\B(\ell^2(X;\H_0)),$$
where each $P_n$ is a rank-one projection in $\B(\ell^2(X_n;\H_0))$. Note that each $P_n$ has the form $P_n(\eta)=\langle \eta, \xi_n \rangle \xi_n$ for all $\eta \in \ell^2(X_n;\H_0)$, where $\xi_n$ is a unit vector in $\ell^2(X_n;\H_0)$. In this case, we also say that $P=\bigoplus_{n\in \N} P_n$ is the \emph{block-rank-one projection associated to the unit vectors $\{\xi_n\}_{n\in \N}$}. Then the associated probability measure $m_n$ on $X_n$ is given by $m_n(x)=||\xi_n(x)||^2$.

\begin{rem}
When $\H_0=\C$ and $\xi_n$ is the unit vector $\frac{1}{\sqrt{|X_n|}}\chi_{X_n}\in \ell^2(X_n)$, the corresponding block-rank-one projection $P$ is the so-called \emph{averaging projection}. It has been well-studied in \cite{structure,Intro}, and the most significant result is the fact that $P$ is quasi-local \emph{if and only if} $P$ belongs to the uniform Roe algebra $C^*_u(X)$ \emph{if and only if} $\{X_n\}_{n\in \N}$ forms a sequence of asymptotic expanders in the sense of Definition~\ref{defn:asymptotic expanders} (see \cite[Theorem~6.1]{structure}).
\end{rem}

Similarly to \cite[Lemma~3.8 and Proposition~3.9]{Intro} it is straightforward to obtain the following two results:
\begin{lem}\label{lem: quasi-locality of the projection}
For each $n \in \N$ and any $A,B \subseteq X_n$, we have 
$$\|\chi_A P_n \chi_B\| =\|\chi_A \xi_n\| \cdot \|\chi_B \xi_n\|= \sqrt{m_n(A)\cdot m_n(B)}.$$
\end{lem}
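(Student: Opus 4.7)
The plan is to compute $\chi_A P_n \chi_B$ directly using the rank-one formula $P_n(\eta) = \langle \eta, \xi_n\rangle \xi_n$ and to read off the operator norm of the resulting rank-one operator. First, I would observe that for any $\eta \in \ell^2(X_n;\H_0)$,
\[
(\chi_A P_n \chi_B)(\eta) = \chi_A \bigl(\langle \chi_B \eta, \xi_n\rangle \xi_n\bigr) = \langle \eta, \chi_B \xi_n\rangle \, \chi_A \xi_n,
\]
where in the last step I use that $\chi_B$ is a self-adjoint projection on $\ell^2(X_n;\H_0)$ (via the natural embedding $\ell^\infty(X_n) \hookrightarrow \B(\ell^2(X_n;\H_0))$). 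Thus $\chi_A P_n \chi_B$ is the rank-one operator $\eta \mapsto \langle \eta, \chi_B\xi_n\rangle \, \chi_A\xi_n$, and a standard fact about rank-one operators gives
\[
\|\chi_A P_n \chi_B\| = \|\chi_A \xi_n\| \cdot \|\chi_B \xi_n\|.
\]

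Next, I would unpack the identity $m_n(x) = \|\xi_n(x)\|^2$ (which is how the associated probability measure is defined in Definition~\ref{defn:block rank one projection}) to rewrite $\|\chi_A \xi_n\|^2 = \sum_{x \in A} \|\xi_n(x)\|^2 = m_n(A)$, and similarly $\|\chi_B \xi_n\|^2 = m_n(B)$. Combining these two displays gives the claimed equality $\|\chi_A P_n \chi_B\| = \sqrt{m_n(A)\cdot m_n(B)}$.

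There is no substantial obstacle here; the statement is essentially a bookkeeping exercise once one has the explicit rank-one formula for $P_n$ in hand. The only mild point to be careful about is distinguishing the two interpretations of $\chi_B$ (as a characteristic function versus as a multiplication operator on $\ell^2(X_n;\H_0)\cong \ell^2(X_n)\otimes \H_0$), which is what makes the move $\chi_B P_n^* = P_n\chi_B$ into $\langle \chi_B\eta,\xi_n\rangle = \langle \eta,\chi_B\xi_n\rangle$ legitimate. This is precisely the same calculation as in \cite[Lemma~3.8]{Intro}, with scalar coefficients $|\xi_n(x)|^2$ replaced by the Hilbert-space norms $\|\xi_n(x)\|^2$, so no new ideas are needed.
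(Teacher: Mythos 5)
Your proof is correct and is exactly the computation the paper has in mind: the paper omits the argument, stating only that it is obtained "similarly to \cite[Lemma~3.8]{Intro}", and your rank-one calculation $\chi_A P_n \chi_B = \langle\,\cdot\,,\chi_B\xi_n\rangle\,\chi_A\xi_n$ together with the identity $m_n(x)=\|\xi_n(x)\|^2$ is precisely that adaptation. No discrepancy to report.
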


\begin{prop}\label{prop: char for the projection being quasi-local}
A block-rank-one projection $P=\bigoplus_{n\in \N} P_n \in \B(\ell^2(X;\H_0))$ with respect to $\{X_n\}_{n\in \N}$ is quasi-local \emph{if and only if}
$$0 = \lim_{R \to +\infty} \sup\big\{m_n(A)\cdot m_n(B): n\in \N,\  A,B \subseteq X_n,\  d(A,B) \geq R\big\}.$$
\end{prop}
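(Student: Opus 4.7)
The proof reduces directly to Lemma~\ref{lem: quasi-locality of the projection} via the block-diagonal structure of $P$. The key observation is the following: for arbitrary subsets $A, B \subseteq X$, decompose them as $A=\bigsqcup_n A_n$ with $A_n=A\cap X_n$ and similarly $B_n=B\cap X_n$. Because $P=\bigoplus_n P_n$ is supported inside the diagonal blocks $\ell^2(X_n;\H_0)\times \ell^2(X_n;\H_0)$, we have
\[
\chi_A P\chi_B \;=\; \bigoplus_{n\in \N}\chi_{A_n}P_n\chi_{B_n},
\]
so that by Lemma~\ref{lem: quasi-locality of the projection},
\[
\|\chi_A P\chi_B\| \;=\; \sup_{n\in \N}\|\chi_{A_n}P_n\chi_{B_n}\| \;=\; \sup_{n\in \N}\sqrt{m_n(A_n)\cdot m_n(B_n)}.
\]
With this single identity, both directions are immediate.

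For the ``if'' direction, given $\varepsilon>0$, use the assumed limit to pick $R>0$ such that $m_n(A)m_n(B)<\varepsilon^{2}$ for every $n$ and every pair $A,B\subseteq X_n$ with $d(A,B)\geq R$. If $A,B\subseteq X$ satisfy $d(A,B)>R$, then each pair $A_n,B_n\subseteq X_n$ still satisfies $d(A_n,B_n)\geq R$ (or one of them is empty, in which case the product vanishes). The displayed identity above then yields $\|\chi_A P\chi_B\|\leq \varepsilon$, so $P$ is quasi-local.

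For the ``only if'' direction, fix $\varepsilon>0$ and choose $R>0$ from the definition of quasi-locality. For any $n\in \N$ and any $A,B\subseteq X_n$ with $d(A,B)> R$, regard $A$ and $B$ as subsets of $X$; then $\|\chi_A P\chi_B\|<\varepsilon$, which by Lemma~\ref{lem: quasi-locality of the projection} gives $m_n(A)m_n(B)<\varepsilon^{2}$. Since $\varepsilon$ was arbitrary, this is exactly the required limit statement.

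There is no real obstacle here: the entire content is the block-diagonality of $P$ combined with the sup-norm for operators of the form $\bigoplus_n T_n$, and Lemma~\ref{lem: quasi-locality of the projection} does the remaining computation. The only small bookkeeping point is to replace the strict inequality $d(A,B)>R$ appearing in quasi-locality with $d(A,B)\geq R$ appearing in the statement, which is harmless after slightly enlarging $R$.
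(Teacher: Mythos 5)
Your proof is correct and is essentially the argument the paper intends: the paper omits the proof (citing the analogous Lemma~3.8 and Proposition~3.9 of \cite{Intro} for averaging projections), and the expected argument is exactly your combination of the block-diagonal identity $\chi_A P\chi_B=\bigoplus_n\chi_{A_n}P_n\chi_{B_n}$ with Lemma~\ref{lem: quasi-locality of the projection}. The small bookkeeping points you flag (strict versus non-strict inequality on $d(A,B)$, empty intersections) are handled correctly.
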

The following observations will allow us to reduce the proof of Theorem~\ref{thm: main result} to the case that all associated probability measures $m_n$ have full support:     
 
For each $n\in \N$, let $Z_n\subseteq X_n$ be the support of $m_n$ and $Z=\bigsqcup_n Z_n$ be the sparse subspace of $X$. Let $Q_n: \ell^2(X_n;\H_0) \to \ell^2(Z_n;\H_0)$ be the orthogonal projection, and
$$Q:=\bigoplus_{n\in \N} Q_n \in \B(\ell^2(X;\H_0), \ell^2(Z;\H_0)).$$
Note that each $Q_nP_nQ_n^* \in \B(\ell^2(Z_n; \H_0))$ is the orthogonal projection onto the one-dimensional subspace spanned by $\xi_n|_{Z_n}$ in $\ell^2(Z_n;\H_0)$, and $Z_n=\supp (\xi_n)$. It follows that $QPQ^*\in \B(\ell^2(Z;\H_0))$ is a block-rank-one projection with respect to $\{Z_n\}_{n\in \N}$. 
The following is obvious:
\begin{lem}\label{cor: support quasi-local}
A block-rank-one projection $P\in \B(\ell^2(X;\H_0))$ with respect to $\{X_n\}_{n\in \N}$ is quasi-local \emph{if and only if} $QPQ^*\in \B(\ell^2(Z;\H_0))$ is quasi-local.
\end{lem}

On the other hand, we also have the following observation:
\begin{lem}\label{lem: support Roe}
Let $P\in \B(\ell^2(X;\H_0))$ be a block-rank-one projection with respect to $\{X_n\}_{n\in \N}$. Then the following hold:
\begin{itemize}
\item When $\H_0=\C$, then $P$ belongs to $C^*_u(X)$ \emph{if and only if} $QPQ^*$ belongs to $C^*_u(Z)$. 
\item When $\H_0$ is a separable infinite-dimensional Hilbert space, then $P$ belongs to $C^*(X)$ \emph{if and only if} $QPQ^*$ belongs to $C^*(Z)$.
\end{itemize}
\end{lem}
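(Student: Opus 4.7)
My plan is to establish both directions via a single structural identity: the projection $P$ is already ``supported on $Z$'', and conjugation by $Q$ and by $Q^*$ preserves the defining properties of Roe and uniform Roe algebras. Concretely, I would first verify the identity $P = Q^*(QPQ^*)Q$. Since each unit vector $\xi_n$ is supported on $Z_n = \supp(m_n)$, the rank-one projection $P_n$ satisfies $P_n = \chi_{Z_n} P_n \chi_{Z_n}$. Because $Q_n$ is a coisometry with $Q_nQ_n^* = \Id_{\ell^2(Z_n;\H_0)}$ and $Q_n^*Q_n$ equal to the multiplication operator $\chi_{Z_n}$ on $\ell^2(X_n;\H_0)$, this gives $P_n = Q_n^*(Q_nP_nQ_n^*)Q_n$; taking the direct sum in $n$ yields the identity for $P$.

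For the forward direction, I would observe that $Q$ is a ``zero-propagation'' partial isometry covering the inclusion $Z \hookrightarrow X$: its matrix entries are $Q_{z,x} = \Id_{\H_0}$ if $x = z \in Z$ and $0$ otherwise. Consequently, for any $T \in \B(\ell^2(X;\H_0))$ one computes $(QTQ^*)_{z,w} = T_{z,w}$ for $z,w \in Z$. Since the metric on $Z$ is the restriction of $d$, the propagation of $QTQ^*$ is no larger than that of $T$, and local compactness of matrix entries is inherited. In the scalar case $\H_0 = \C$, finite-propagation bounded operators go to finite-propagation bounded operators. Passing to norm closures, $\mathrm{Ad}_Q$ maps $C^*(X)$ into $C^*(Z)$ and $C^*_u(X)$ into $C^*_u(Z)$, yielding the forward implication in both cases.

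For the reverse direction, I would use the identity from the first step together with the map $S \mapsto Q^*SQ$, which is a $*$-homomorphism because $QQ^* = \Id$. Analogously, $(Q^*SQ)_{x,y} = S_{x,y}$ for $x,y \in Z$ and vanishes otherwise, so this map preserves finite propagation (with the same bound) and local compactness. Thus, if $T_k \to QPQ^*$ in norm with each $T_k$ a finite-propagation locally compact operator on $\ell^2(Z;\H_0)$, then $Q^*T_kQ \to Q^*(QPQ^*)Q = P$ in norm, with each $Q^*T_kQ$ a finite-propagation locally compact operator on $\ell^2(X;\H_0)$. Hence $P \in C^*(X)$, and likewise $P \in C^*_u(X)$ in the scalar case.

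I do not anticipate a significant obstacle in this proof; it is essentially a bookkeeping exercise verifying that compression by $Q$ and extension by $Q^*$ are ``propagation non-increasing'' and preserve the compactness/scalar conditions defining the relevant algebras. The only subtlety to keep in mind is that the metric on $Z$ induced by the sparse decomposition $Z = \bigsqcup_n Z_n$ must agree with the restriction of $d$ from $X$, which is immediate from $Z_n \subseteq X_n$ and the fact that $d(X_n,X_m) \to \infty$ forces $d(Z_n,Z_m) \to \infty$.
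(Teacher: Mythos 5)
Your proposal is correct and follows exactly the paper's (one-line) argument: the paper's proof likewise rests on the two facts that $Q$ has propagation zero and that $P=Q^*(QPQ^*)Q$, and your write-up simply supplies the routine verifications. No issues.
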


\begin{proof}
This follows from the fact that $Q$ can be also regarded as an element in $\B(\ell^2(X;\H_0))$ with propagation zero, and $P=Q^*(QPQ^*)Q$. 
\end{proof}

When $\H_0=\C$ and $P$ is the averaging projection with respect to $\{X_n,d_n\}_{n\in \N}$, then we know from \cite[Theorem~B]{Intro} that $P$ is quasi-local if and only if $\{(X_n,d_n)\}_{n\in \N}$ forms a sequence of asymptotic expanders in the following sense: 
 
\begin{defn}[\cite{Intro}]\label{defn:asymptotic expanders}
A sequence of finite metric spaces $\{(X_n,d_n)\}_{n\in \N}$ is called a sequence of \emph{asymptotic expanders}\footnote{In this paper, we do not require the condition $|X_n|\to \infty$ as $n\to \infty$ as a part of the definition.} if for any $\alpha\in (0,\frac{1}{2}]$ there exist $c_\alpha \in (0,1)$ and $R_\alpha>0$ such that for any $n\in \N$ and $A \subseteq X_n$ with $\alpha |X_n| \leq |A| \leq \frac{1}{2}|X_n|$, we have $|\partial_{R_\alpha} A| > c_\alpha|A|$.
\end{defn}

The following result is one of the main motivations for us to study measured asymptotic expanders, and it is also the first step to attack the fundamental theorem (Theorem~\ref{thm: main result}). Since its proof is essentially a repetition of the arguments used to prove \cite[Theorem~3.11]{Intro}, we omit further details.

\begin{prop}\label{prop: measured asymptotic expanders}
Let $(X,d)$ be a coarse disjoint union of a sequence of finite metric spaces $\{(X_n,d_n)\}_{n\in \N}$, and $P=\bigoplus_{n\in \N} P_n \in \B(\ell^2(X))$ be a block-rank-one projection with respect to $\{X_n\}_{n\in \N}$. If $m_n$ are the probability measures associated to $P$, then $P$ is quasi-local \emph{if and only if} $\{(X_n, d_n, m_n)\}_{n\in \N}$ forms a sequence of measured asymptotic expanders.
\end{prop}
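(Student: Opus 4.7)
My plan is to use Proposition~\ref{prop: char for the projection being quasi-local} to rephrase quasi-locality of $P$ as the condition
\[
(\star)\qquad \lim_{R\to\infty}\sup\bigl\{m_n(A)\,m_n(B):\ n\in\N,\ A,B\subseteq X_n,\ d(A,B)\geq R\bigr\}=0,
\]
and then to show $(\star)$ is equivalent to the measured asymptotic expander condition. Since every step below only uses cardinalities of $A$ through the measures $m_n$, the argument is a direct transcription of the one in \cite[Theorem~3.11]{Intro}, with $|A|/|X_n|$ replaced by $m_n(A)$ and $|A|\leq|X_n|/2$ by $m_n(A)\leq 1/2$.

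For the implication ``measured asymptotic expanders $\Rightarrow (\star)$'' I would use iterated boundary expansion. Fix $\varepsilon>0$ and suppose $A,B\subseteq X_n$ are disjoint with $m_n(A)m_n(B)\geq \varepsilon$; then $m_n(A),m_n(B)\geq \varepsilon$, and since $A\cap B=\emptyset$ at most one of them exceeds $1/2$, so we may assume $m_n(A)\leq 1/2$. Apply the expander property with $\alpha=\varepsilon$ to obtain $c_\varepsilon,R_\varepsilon$, and set $A^{(0)}:=A$, $A^{(k+1)}:=\Nd_{R_\varepsilon}(A^{(k)})$. As long as $m_n(A^{(k)})\leq 1/2$ we have $m_n(A^{(k+1)})\geq (1+c_\varepsilon)\,m_n(A^{(k)})$, and in particular $m_n(A^{(k+1)})\geq\varepsilon$ so that the same pair $(c_\varepsilon,R_\varepsilon)$ can be reused. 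After $k_0\leq \lceil \log(1/(2\varepsilon))/\log(1+c_\varepsilon)\rceil$ steps we reach $m_n(\Nd_{k_0R_\varepsilon}(A))>1/2$, and the same for $B$. Two subsets of probability measure exceeding $1/2$ must intersect, whence $d(A,B)\leq 2k_0R_\varepsilon$. Thus choosing $R>2k_0R_\varepsilon$ forces $m_n(A)m_n(B)<\varepsilon$, giving $(\star)$.

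For the converse I would proceed by contraposition. If the measured asymptotic expander condition fails, there is some $\alpha\in(0,1/2]$ such that for every $R>0$ one can find $n$ and $A\subseteq X_n$ with $\alpha\leq m_n(A)\leq 1/2$ yet $m_n(\partial_R A)\leq \tfrac{1}{4} m_n(A)\leq \tfrac{1}{8}$. Setting $B:=X_n\setminus \Nd_R(A)$, we get $d(A,B)>R$ and
\[
m_n(B)=1-m_n(A)-m_n(\partial_R A)\geq 1-\tfrac{1}{2}-\tfrac{1}{8}=\tfrac{3}{8},
\]
so $m_n(A)m_n(B)\geq 3\alpha/8$ for arbitrarily large $R$, directly contradicting $(\star)$. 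The main subtlety in the argument is the forward iteration: one must keep the threshold $\alpha$, and hence the constants $(c_\varepsilon,R_\varepsilon)$, fixed across all iterations. This works precisely because each iterate still satisfies $m_n(A^{(k)})\geq \varepsilon$, so a single invocation of the expander condition at level $\alpha=\varepsilon$ powers the whole chain; no new ingredients beyond those in \cite[Theorem~3.11]{Intro} are required.
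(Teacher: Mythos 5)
Your proof is correct and follows exactly the route the paper intends: it reduces quasi-locality to the condition of Proposition~\ref{prop: char for the projection being quasi-local} and then runs the iterated-neighbourhood expansion argument of \cite[Theorem~3.11]{Intro} with $|A|/|X_n|$ replaced by $m_n(A)$, which is precisely the "repetition of the arguments" the paper invokes while omitting the details. Both directions (the expansion chain terminating once two sets of measure exceeding $\tfrac12$ must meet, and the contrapositive via $B:=X_n\setminus\Nd_R(A)$) are carried out correctly.
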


For later purposes, we extend the notion of measured asymptotic expanders to general \emph{finite measured metric spaces} defined in Definition~\ref{def:ghostly} as follows:

\begin{defn}[{\cite[Definition~6.1]{dypartI}}]\label{def: measured asymptotic expander}
A sequence of finite measured metric spaces $\{(X_n,d_n,m_n)\}_{n \in \N}$ is called a sequence of \emph{measured asymptotic expanders} if for any $\alpha \in (0,\frac{1}{2}]$ there exist $c_\alpha\in (0,1)$ and $R_\alpha>0$ such that for any $n \in \N$ and $A \subseteq X_n$ with $\alpha \cdot m_n(X_n) \leq m_n(A) \leq \frac{1}{2} m_n(X_n)$, we have $m_n(\partial_{R_\alpha} A) > c_\alpha\cdot m_n(A)$.

In this case, we call functions $\underbar{c}:\alpha \mapsto c_\alpha$ and $\underbar{R}:\alpha \mapsto R_\alpha$ from $(0,\frac{1}{2}]$ to $(0,\infty)$ \emph{parameter functions} of $\{(X_n,d_n,m_n)\}_{n \in \N}$, and $\{(X_n,d_n,m_n)\}_{n\in \N}$ is called a sequence of \emph{measured $(\underbar{c},\underbar{R})$-asymptotic expanders}.
\end{defn}
\begin{rem}
The notion of asymptotic expanders introduced and studied in \cite{structure,Intro} is exactly the notion of \emph{ghostly} measured asymptotic expanders with respect to counting measures (see Remark~\ref{rem:ghostly}).
\end{rem}

\begin{rem}
Recall that in \cite[Theorem~6.16]{dypartI}, the authors established a connection between measured asymptotic expanders and asymptotic expansion in measure for continuous measure-class-preserving actions by means of measured approximating spaces. As a consequence, it provides an efficient and unified way to construct measured asymptotic expanders from strongly ergodic actions (see \cite[Proposition~3.5]{dypartI}).
\end{rem}

We need three auxiliary lemmas in order to prove the structure theorem for measured asymptotic expanders in the next subsection. The first one allows us to handle subsets $A\subseteq X_n$ with $m_n(A) \geq \frac{1}{2} m_n(X_n)$. Although its proof is already given in \cite{dypartI}, we still include it here for the sake of completeness.

\begin{lem}{$($see \cite[Lemma~6.3]{dypartI}$)$}\label{lem:annoying 1/2 bdd constant}
Let $\{(X_n,d_n,m_{n})\}_{n \in \N}$ be a sequence of measured $(\underbar{c},\underbar{R})$-asymptotic expanders. Then for any $\beta \in [\frac{1}{2},1)$, there exist $\tilde c>0$ and $\tilde R>0$ depending only on the parameter functions $\underbar{c}$ and $\underbar{R}$ such that for every $n\in \N$ and $A \subseteq X_n$ with
$\frac{1}{2} m_n(X_n) \leq m_n(A) \leq \beta \cdot m_n(X_n)$, we have $m_n(\partial_{\tilde R} A) > \tilde{c} \cdot m_n(A)$.

Furthermore, if $\underbar{c} \equiv c$ and $\underbar{R} \equiv R$ are constant functions, we can choose $\tilde R=R$ and $\tilde c = \frac{1-\beta}{2\beta}c$.
\end{lem}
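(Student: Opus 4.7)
The plan is as follows. The naive idea would be to apply the measured-expander hypothesis directly to $A$ or to its complement $B := X_n\setminus A$, but neither works: $A$ may have measure strictly larger than $m_n(X_n)/2$, so the hypothesis doesn't apply to $A$; and applying it to $B$ only produces a lower bound on $m_n(\partial_{R_\alpha} B)$, which sits inside $A$, whereas $\partial_{R_\alpha} A$ sits inside $B$. Instead, I would apply the expansion condition to a subset of $B$ that is pushed \emph{away} from $A$, namely the ``interior'' of $B$ at the appropriate scale.

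Concretely, set $\alpha_0 := (1-\beta)/2 \in (0, \tfrac{1}{2}]$, put $R_0 := \underbar{R}(\alpha_0)$ and $c_0 := \underbar{c}(\alpha_0)$, and introduce
$$B^\circ := B \setminus \partial_{R_0} A = \{x \in B : d_n(x, A) > R_0\}.$$
A one-line argument shows $\Nd_{R_0}(B^\circ) \subseteq B$: if $y \in A$ were in $\Nd_{R_0}(B^\circ)$, then some $z \in B^\circ$ would satisfy $d_n(z,A) \leq d_n(z,y) \leq R_0$, contradicting $z \in B^\circ$. From this I obtain the key inclusion
$$\partial_{R_0} B^\circ \subseteq B \setminus B^\circ = \partial_{R_0} A.$$
Now I would split into two cases based on the size of $B^\circ$. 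If $m_n(B^\circ) < \alpha_0 m_n(X_n)$, then directly $m_n(\partial_{R_0} A) = m_n(B) - m_n(B^\circ) > \tfrac{1-\beta}{2} m_n(X_n)$, using $m_n(B) \geq (1-\beta) m_n(X_n)$. Otherwise $m_n(B^\circ) \in [\alpha_0 m_n(X_n), \tfrac{1}{2} m_n(X_n)]$, so the expansion hypothesis applies to $B^\circ$ at level $\alpha_0$, yielding $m_n(\partial_{R_0} B^\circ) > c_0 m_n(B^\circ) \geq c_0 \cdot \tfrac{1-\beta}{2} m_n(X_n)$, and the key inclusion then bounds $m_n(\partial_{R_0} A)$ from below by the same expression.

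Combining both cases and using $m_n(X_n) \geq m_n(A)/\beta$ to convert $m_n(X_n)$-lower-bounds into $m_n(A)$-lower-bounds, we obtain the claim with $\tilde R := R_0$ and $\tilde c := c_0(1-\beta)/(2\beta)$ (Case~2 is the binding one since $c_0 \in (0,1)$). When $\underbar{c} \equiv c$ and $\underbar{R} \equiv R$ are constant, this specialises to $\tilde R = R$ and $\tilde c = c(1-\beta)/(2\beta)$, matching the furthermore statement. I do not anticipate any serious technical obstacle; the only subtlety is recognising that without bounded geometry one cannot symmetrise between $\partial_{R_0} B \subseteq A$ and $\partial_{R_0} A \subseteq B$, which is exactly why one is forced to apply the expansion condition to the correctly chosen subset $B^\circ \subseteq B$ rather than to $B$ itself.
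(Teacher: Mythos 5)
Your proof is correct and is essentially the paper's own argument: your set $B^\circ = \{x \in X_n\setminus A : d_n(x,A) > R_0\}$ is exactly the set $B = X_n \setminus \Nd_{R_{\alpha'}}(A)$ that the paper applies the expansion hypothesis to, and the case split on whether $m_n(B^\circ)$ falls below or above $\frac{1-\beta}{2}m_n(X_n)$, together with the inclusion $\partial_{R_0} B^\circ \subseteq \partial_{R_0} A$, matches the paper's reasoning and yields the same constants $\tilde R = R_0$ and $\tilde c = c_0\frac{1-\beta}{2\beta}$.
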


\begin{proof}
Replacing $m_n$ by $\frac{1}{m_n(X_n)}m_n$, we may assume that each $m_n$ is a probability measure. Now we fix $\beta \in [\frac{1}{2},1)$ and set $\alpha':=\frac{1-\beta}{2}\in (0,\frac{1}{4}]$. By the hypothesis, there exist $c_{\alpha'}\in (0,1)$ and $R_{\alpha'}>0$ such that for any $n\in \N$ and $A'\subseteq X_n$ with $\alpha'\leq m_n(A') \leq \frac{1}{2}$, we have $m_n(\partial_{R_{\alpha'}} A')>c_{\alpha'}\cdot m_n(A')$.

Given $A\subseteq X_n$ with $\frac{1}{2}\leq m_n(A) \leq \beta$, let $B:= X_n \setminus (\Nd_{R_{\alpha'}}(A))$. Then $m_n(B)\leq m_n(X_n\backslash A)\leq \frac{1}{2}$. If $m_n(B) < \frac{1-\beta}{2}$, then
$m_n(\Nd_{R_{\alpha'}}(A)) > \frac{1+\beta}{2}\geq \frac{1+\beta}{2\beta}m_n(A)$. Since $\Nd_{R_{\alpha'}}(A)=A\sqcup \partial_{R_{\alpha'}}A$, we deduce that $m_n(\partial_{R_{\alpha'}} A) > c_{\alpha'} \frac{1-\beta}{2\beta} m_n(A)$. On the other hand, if $m_n(B)\geq \frac{1-\beta}{2}=\alpha'$ then we have $m_n(\partial_{R_{\alpha'}} B)>c_{\alpha'}\cdot m_n(B)$. Since $\partial_{R_{\alpha'}} B \subseteq \partial_{R_{\alpha'}} A$, we conclude that 
 \[
  m_n(\partial_{R_{\alpha'}} A) \geq m_n(\partial_{R_{\alpha'}} B) >c_{\alpha'} \cdot m_n(B) \geq c_{\alpha'}\frac{1-\beta}{2\beta}m_n(A).
 \]
Thus, we have completed the proof.
\end{proof}

In the next lemma, we show that subspaces of measured asymptotic expanders are themselves measured asymptotic expanders in a uniform way provided that their measures are uniformly bounded below. Although elementary, this lemma plays a crucial role to prove the structure theorem for measured asymptotic expanders, and the corresponding lemma is \emph{not} necessary in the case of asymptotic expanders as in \cite{structure}.

\begin{lem}\label{lem: uniform asymptotic expansion for subspaces}
Let $\{(X_n,d_n,m_n)\}_{n\in \N}$ be a sequence of measured asymptotic expanders.
For any $\beta \in (0,1]$, $\alpha \in (0,\frac{1}{2}]$ and
$c\in (0,1)$ there exists $R>0$ such that for any sequence
$\{Y_n \subseteq X_n\}_{n \in \N}$ with $m_n(Y_n) \geq \beta m_n(X_n)$ and any
$A \subseteq Y_n$ with $\alpha \cdot m_n(Y_n) \leq m_n(A) \leq \frac{1}{2} m_n(Y_n)$,
we have $m_n(\partial_R^{Y_n} A) > c\cdot m_n(A)$.
\end{lem}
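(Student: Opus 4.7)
The plan is to reduce expansion of $A$ in $Y_n$ to the ambient measured asymptotic expansion of $A$ inside $X_n$: I iterate the ambient expansion to enlarge $A$ until its $R$-neighbourhood in $X_n$ fills all but a tiny fraction of $X_n$, and then invoke the density bound $m_n(Y_n)\geq \beta m_n(X_n)$ to ensure enough of this enlargement is captured by $Y_n$.

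Fix $\alpha$, $\beta$, $c$ as in the statement. Since $m_n(A)\geq \alpha\beta m_n(X_n)$, the measured asymptotic expansion at parameter $\alpha\beta$ supplies $c_1,R_1>0$ controlling subsets of $X_n$ with measure between $\alpha\beta m_n(X_n)$ and $\tfrac12 m_n(X_n)$. Setting $\eta:=1-\tfrac{(1-c)\beta}{4}\in(\tfrac12,1)$ and applying Lemma~\ref{lem:annoying 1/2 bdd constant} with parameter $\eta$ supplies $\tilde c,\tilde R>0$ controlling subsets of $X_n$ whose measure lies in $[\tfrac12 m_n(X_n),\eta m_n(X_n)]$. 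Write $R_0:=\max(R_1,\tilde R)$, $c_0:=\min(c_1,\tilde c)$, and form $\tilde A_k:=\Nd_{kR_0}(A)\subseteq X_n$, so that $\tilde A_{k+1}=\tilde A_k\sqcup \partial_{R_0}\tilde A_k$.

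Next, I iterate: as long as $m_n(\tilde A_k)\leq \eta m_n(X_n)$ (automatically $m_n(\tilde A_k)\geq m_n(A)\geq \alpha\beta m_n(X_n)$), one of the two expansion rules above applies to $\tilde A_k$ and yields $m_n(\tilde A_{k+1})\geq (1+c_0)m_n(\tilde A_k)$. Thus, choosing $K\in\N$ with $(1+c_0)^K>\eta/(\alpha\beta)$ (which depends only on $\alpha,\beta,c$), I deduce $m_n(\tilde A_K)>\eta m_n(X_n)$: either the iteration exits the regime $m_n(\tilde A_k)\leq \eta m_n(X_n)$ before step $K$ (and monotonicity of $\{\tilde A_k\}_k$ finishes the job), or it does not, in which case the geometric growth gives $m_n(\tilde A_K)\geq (1+c_0)^K\alpha\beta m_n(X_n)>\eta m_n(X_n)$.

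Finally, put $R:=KR_0$. Using $m_n(X_n)\leq m_n(Y_n)/\beta$ together with $m_n(A)\leq \tfrac12 m_n(Y_n)$, I estimate
\[ m_n(\tilde A_K\cap Y_n)\geq m_n(\tilde A_K)-(m_n(X_n)-m_n(Y_n))>m_n(Y_n)-\tfrac{(1-c)\beta}{4}m_n(X_n)\geq \tfrac{3+c}{4}m_n(Y_n), \]
and hence, since $A\subseteq Y_n$,
\[ m_n(\partial_R^{Y_n}A)=m_n(\tilde A_K\cap Y_n)-m_n(A)>\tfrac{3+c}{4}m_n(Y_n)-m_n(A)\geq \tfrac{1+c}{2}m_n(A)>c\,m_n(A), \]
as required. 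The only delicate point is calibrating $\eta$ so that the ``leakage'' $m_n(X_n\setminus Y_n)$ cannot swallow the newly grown boundary; this is precisely why Lemma~\ref{lem:annoying 1/2 bdd constant} is unavoidable, because for small $\beta$ the iteration must push $\tilde A_k$ well past $\tfrac12 m_n(X_n)$ before enough mass is guaranteed to land in $Y_n$.
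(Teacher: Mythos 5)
Your proof is correct, but it takes a genuinely different route from the paper's. The paper argues in one step by contradiction: it invokes the quasi-locality characterisation (Proposition~\ref{prop: char for the projection being quasi-local} together with Proposition~\ref{prop: measured asymptotic expanders}) to get, for $\varepsilon=\tfrac{\alpha\beta^2(1-c)}{2}$, a radius $R_\varepsilon$ with $m_n(A)\cdot m_n(B)<\varepsilon$ whenever $d(A,B)\geq R_\varepsilon$, and then observes that if $m_n(\partial^{Y_n}_{R_\varepsilon}A)\leq c\,m_n(A)$ then both $A$ and $Y_n\setminus\Nd^{Y_n}_{R_\varepsilon}(A)$ have measure bounded below ($\geq\alpha\beta$ and $\geq\tfrac{1-c}{2}\beta$ respectively), violating the product bound. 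You instead work directly from the definition, iterating the ambient expansion (supplemented by Lemma~\ref{lem:annoying 1/2 bdd constant} to cross the $\tfrac12$-threshold up to $\eta=1-\tfrac{(1-c)\beta}{4}$) until $\Nd_{KR_0}(A)$ fills an $\eta$-fraction of $X_n$, and then use $m_n(Y_n)\geq\beta m_n(X_n)$ to bound the leakage; your final chain of inequalities checks out, and the constants $K$, $R_0$ depend only on $\alpha,\beta,c$ and the parameter functions, as required. The paper's argument is shorter because the ``two distant sets cannot both be large'' principle is already packaged in the quasi-locality equivalence (whose proof the paper in fact omits by reference); yours is more self-contained and elementary, at the cost of the iteration and the calibration of $\eta$. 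One cosmetic remark: $\tilde A_{k+1}=\Nd_{(k+1)R_0}(A)$ only \emph{contains} $\tilde A_k\sqcup\partial_{R_0}\tilde A_k$ in a general metric space (the reverse inclusion can fail), but the inequality $m_n(\tilde A_{k+1})\geq(1+c_0)m_n(\tilde A_k)$ that you actually use is unaffected.
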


\begin{proof}
Replacing $m_n$ by $\frac{1}{m_n(X_n)}m_n$, we may assume that each $m_n$ is a probability measure. From Proposition~\ref{prop: char for the projection being quasi-local} and Proposition~\ref{prop: measured asymptotic expanders}, we know that for any $\varepsilon>0$, there exists some $R_\varepsilon>0$ such that
for any $n \in \N$ and $A,B \subseteq X_n$ with $d(A,B) \geq R_\varepsilon$
we have $m_n(A)\cdot m_n(B) < \varepsilon$. In particular, it holds for $\varepsilon:=\frac{\alpha\beta^2(1-c)}{2}>0$.

Fix an arbitrary sequence
$\{Y_n \subseteq X_n\}_{n \in \N}$ with $m_n(Y_n) \geq \beta$, and any
$A \subseteq Y_n$ with
$\alpha \cdot m_n(Y_n) \leq m_n(A) \leq \frac{1}{2}\cdot m_n(Y_n)$.
Suppose that $m_n(\partial^{Y_n}_{R_\varepsilon} A) \leq c\cdot m_n(A)$. Then
due to the decomposition
$$Y_n= (Y_n \setminus \Nd^{Y_n}_{R_\varepsilon}(A)) \sqcup A \sqcup \partial^{Y_n}_{R_\varepsilon} A,$$
we have that
\begin{eqnarray*}
m_n(Y_n \setminus \Nd^{Y_n}_{R_\varepsilon}(A)) =  m_n(Y_n) - m_n(A) - m_n(\partial^{Y_n}_{R_\varepsilon}A) \geq  \frac{1-c}{2}m_n(Y_n) \geq  \frac{1-c}{2} \beta.
\end{eqnarray*}
Hence, it follows that
$$m_n(A) \cdot m_n(Y_n \setminus \Nd^{Y_n}_{R_\varepsilon}(A)) \geq \alpha \cdot \beta \cdot \frac{1-c}{2} \beta=\varepsilon.$$
Since $d(A, Y_n \setminus \Nd^{Y_n}_{R_\varepsilon}(A))\geq R_\epsilon$, we have reached a contradiction.
\end{proof}

We finish this subsection by showing the existence of subspaces whose measure is more “balanced” in the lemma below. Note that every counting measure is automatically “balanced” so that the next lemma is redundant in the study of asymptotic expanders. However, it is crucial for us to be able to apply Proposition~\ref{prop: spetral gap for measured expanders} in proving our fundamental theorem (Theorem~\ref{thm: main result}). 

In order to formulate the lemma in a concise way, we here introduce the following notation: \emph{given two non-negative numbers $a,b \in [0,\infty)$, we denote $a \thicksim_s b$ for some $s\in (0,1)$ if $sa \leq b \leq \frac{a}{s}$.} 

\begin{lem}\label{lem: bounded ratio}
Let $\{(X_n,d_n,m_n)\}_{n\in \N}$ be a sequence of finite measured metric spaces with uniformly bounded
geometry. Given $R>0$ and $s\in (0,1)$,
there exists a subspace $X^{R,s}_n \subseteq X_n$ for each $n\in \N$ satisfying
the following:
\begin{enumerate}
  \item For any $x,y\in X_n^{R,s}$ with $d_n(x,y) \leq R$, we have
    $m_n(x) \thicksim_s m_n(y)$; \label{lem:bounded-ratio-1}
  \item $m_n(X_n \setminus X_n^{R,s}) \leq s\cdot N_R\cdot m_n(X^{R,s}_n)$, where $N_R:=\sup_{n\in \N}\sup_{x\in X_n}|B(x,R)|$.
    \label{lem:bounded-ratio-2}
\end{enumerate}
\end{lem}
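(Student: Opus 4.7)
The plan is to construct $X_n^{R,s}$ by a greedy admission procedure, sweeping through the points of $X_n$ in order of non-increasing $m_n$-measure. Enumerate $X_n=\{x_1,x_2,\ldots,x_{|X_n|}\}$ with $m_n(x_1)\geq m_n(x_2)\geq \cdots$ (ties broken arbitrarily), set $Y_0:=\emptyset$, and for each $k\geq 1$ declare
\[
Y_k:=\begin{cases} Y_{k-1}\cup\{x_k\} & \text{if } m_n(x_k)\geq s\cdot m_n(y) \text{ for every } y\in Y_{k-1} \text{ with } d_n(x_k,y)\leq R,\\ Y_{k-1} & \text{otherwise.}\end{cases}
\]
Finally put $X_n^{R,s}:=Y_{|X_n|}$. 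Intuitively, one keeps a point exactly when every already-kept close neighbour (which must be at least as heavy, by the sweep order) is within a factor $1/s$ of it in measure.

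To establish (\ref{lem:bounded-ratio-1}), take $x,y\in X_n^{R,s}$ with $d_n(x,y)\leq R$ and assume without loss of generality that $m_n(x)\geq m_n(y)$, so $x$ was processed before $y$. When $y$ was accepted, $x$ was already in $Y$, so the admission criterion forces $m_n(y)\geq s\cdot m_n(x)$. Combined with $m_n(y)\leq m_n(x)\leq m_n(x)/s$ (using $s<1$), this is exactly $m_n(x)\thicksim_s m_n(y)$.

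For (\ref{lem:bounded-ratio-2}), each $x\in X_n\setminus X_n^{R,s}$ fails the admission criterion at its step, so there exists some $\phi(x)\in X_n^{R,s}$ with $d_n(x,\phi(x))\leq R$ and $m_n(x)<s\cdot m_n(\phi(x))$. Summing,
\[
m_n(X_n\setminus X_n^{R,s})\;\leq\; s\sum_{x\notin X_n^{R,s}} m_n(\phi(x))\;=\;s\sum_{y\in X_n^{R,s}} m_n(y)\cdot|\phi^{-1}(y)|,
\]
and since $\phi^{-1}(y)\subseteq B(y,R)$, the uniform bounded-geometry hypothesis gives $|\phi^{-1}(y)|\leq N_R$, which yields (\ref{lem:bounded-ratio-2}).

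The only point requiring care — and the key design choice — is the sweep order. Processing by decreasing measure does two things at once: it makes the acceptance test automatically symmetric (so a close pair in $X_n^{R,s}$ inherits the two-sided bound from the single one-sided bound imposed on the later-processed point), and it ensures that the witness map $\phi$ lands inside $X_n^{R,s}$ rather than in the discarded set. This second property is essential, since it is exactly what allows $|\phi^{-1}(y)|$ to be bounded locally by $N_R$ and thus converts the greedy construction into the measure estimate.
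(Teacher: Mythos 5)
Your proof is correct and is essentially the same greedy construction as the paper's: both sweep the points of $X_n$ in non-increasing order of $m_n$-measure, retain a point exactly when it is not too light relative to any already-retained point within distance $R$, and then prove (2) by charging each discarded point to a retained witness in its $R$-ball, whose fibre has size at most $N_R$. The only cosmetic difference is that the paper phrases the discarded set as a disjoint union of blocks $A_i\subseteq B(x_i,R)$ attached to the chosen points, whereas you package the same estimate via the witness map $\phi$; the underlying argument is identical.
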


\begin{proof}
We will recursively construct the desired $X^{R,s}_n \subseteq X_n$ for each $n\in \N$. Fix an $n\in \N$, and choose any $x_0 \in X_n$ such that
$m_n(x_0)=\max_{x\in X_n} m_n(x)$. Define
$$
X_{n,1}:=X_n\backslash \{y\in B(x_0,R):m_n(y)<s\cdot m_n(x_0)\}.
$$
Since $s<1$, we see that $x_0\in X_{n,1}$. Thereafter we choose any $x_1 \in X_{n,1} \setminus \{x_0\}$ so that
$m_n(x_1)=\max_{x\in X_{n,1} \setminus \{x_0\}} m_n(x)$, and define
$$
X_{n,2}:=X_{n,1}\backslash \{y\in X_{n,1} \cap B(x_1,R):m_n(y) <s\cdot m_n(x_1)\}.$$
Recursively, we choose $x_k \in X_{n,k} \setminus \{x_{k-1},\dots,x_0\}$ so that $m_n(x_k)=\max_{x\in X_{n,k} \setminus \{x_{k-1},\dots,x_0\}} m_n(x)$, and define
$$
X_{n,k+1}:=X_{n,k}\backslash \{y\in X_{n,k} \cap B(x_k,R):m_n(y) <s\cdot m_n(x_k)\}.$$
By convention, $X_{n,0}:=X_n$.
Since $X_n$ is finite, the recursive process will finish after finitely many steps, when  $X_{n,k_0}=\{x_0,x_1,\ldots, x_{k_0}\}$ for some $k_0\in \N\cup\{0\}$. Note that $m_n(x_0)\geq m_n(x_1)\geq \ldots \geq m_n(x_{k_0})$. We denote the resulting subspace by $X_n^{R,s}=X_{n,k_0}$. Clearly, $X_n^{R,s}$ satisfies condition~\eqref{lem:bounded-ratio-1}.

Let $A_i:=\{y\in X_{n,i} \cap B(x_i,R):m_n(y) <s\cdot m_n(x_i)\}$ for $i=0,1,\ldots,k_0$. Then $$
X_n \setminus X_n^{R,s}=A_{k_0}\sqcup A_{k_0-1}\sqcup \ldots \sqcup A_0.
$$
Since $m_n(A_i)\leq s\cdot N_R\cdot m_n(x_i)$ and $X_n^{R,s}=\{x_0,x_1,\ldots, x_{k_0}\}$, it is easily seen that $X_n^{R,s}$ also satisfies condition~\eqref{lem:bounded-ratio-2}, as required.
\end{proof}

\subsection{Structure theorem for measured asymptotic expanders}\label{Structure Theorem}
Recall that the authors in \cite[Theorem~3.7]{structure} proved a structure theorem for asymptotic expanders. Here we extend the structure result to the case of measured asymptotic expanders by a slightly different and improved argument.

\begin{thm}\label{thm:structure thm}
Let $\{(X_n,d_n,m_n)\}_{n \in \N}$ be a sequence of finite measured metric spaces with uniformly bounded
geometry. Then the following are equivalent:
\begin{enumerate}
  \item $\{(X_n,d_n,m_{n})\}_{n \in \N}$ is a sequence of measured asymptotic expanders;
  \item for any $c\in(0,1)$, there exists a sequence $\{\alpha_k\}_{k \in \N}$ in $(0,1)$ with $\alpha_k\to 0$, and a positive sequence $\{R_k\}_{k\in \N}$ such that for each fixed $n \in \N$ there exists a
    sequence of subspaces $\{Y_{n,k}\}_{k\in \N}$ in $X_n$ satisfying the following for every $k\in \N$:
  \begin{itemize}
    \item[(i)] $Y_{n,k} \subseteq \supp (m_n)$ and $m_n(Y_{n,k}) \geq (1-\alpha_k)\cdot m_n(X_n)$;
    \item[(ii)] for each $A \subseteq Y_{n,k}$ with $0<m_n(A) \leq \frac{1}{2}m_n(Y_{n,k})$, then $m_n(\partial_{R_k}^{Y_{n,k}} A) > c\cdot m_n(A)$;
    \item[(iii)] there exists $s_k\in (0,1)$ such that for any $x,y \in Y_{n,k}$ with $d_n(x,y) \leq R_k$, we have $m_n(x) \thicksim_{s_k} m_n(y)$.
  \end{itemize}
\end{enumerate}
The hypothesis of $(iii)$ is \emph{not} needed for the implication ``$(2)\Rightarrow (1)$''.
\end{thm}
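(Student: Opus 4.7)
The implication $(2) \Rightarrow (1)$ is the easier direction. The plan is to verify the measured asymptotic expander definition for $\{(X_n, d_n, m_n)\}$ directly. Given $\alpha \in (0, \frac{1}{2}]$, I would choose $k$ with $\alpha_k \leq \alpha/4$. For any $A \subseteq X_n$ with $\alpha\, m_n(X_n) \leq m_n(A) \leq \frac{1}{2}m_n(X_n)$, set $A' := A \cap Y_{n,k}$; then (i) gives $m_n(A \setminus A') \leq \alpha_k m_n(X_n) \leq \frac{1}{4}m_n(A)$, hence $m_n(A') \geq \frac{3}{4}m_n(A)$. If $m_n(A') \leq \frac{1}{2}m_n(Y_{n,k})$, I apply (ii) directly to $A'$ inside $Y_{n,k}$. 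Otherwise $m_n(A')$ lies between $\frac{1}{2}m_n(Y_{n,k})$ and $\frac{m_n(Y_{n,k})}{2(1-\alpha_k)} \leq \frac{2}{3}m_n(Y_{n,k})$, so the constant-parameter case of Lemma~\ref{lem:annoying 1/2 bdd constant}, applied to the measured $(c,R_k)$-asymptotic expander $\{(Y_{n,k}, d_n, m_n)\}$ provided by (ii), yields expansion with a shrunken but uniform constant. In either case $\partial_{R_k}^{Y_{n,k}} A' \subseteq \partial_{R_k}^{X_n} A$, giving the required expansion of $A$ in $X_n$. Note that (iii) plays no role here, as promised.

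For $(1) \Rightarrow (2)$, the plan is first to construct $Y_{n,k}$ via the two main lemmas and then to verify (ii). I would replace $X_n$ by $\supp m_n$ to assume $m_n$ has full support, and fix a small $\alpha_0 \in (0, \frac{1}{2})$ once and for all. Applying Lemma~\ref{lem: uniform asymptotic expansion for subspaces} with $\beta = \frac{1}{2}$ yields a single radius $R_\infty > 0$ and constant $c > 0$ delivering uniform expansion on every subspace $Y_n \subseteq X_n$ of measure at least $\frac{1}{2}m_n(X_n)$ for subsets of relative measure in $[\alpha_0, \frac{1}{2}]$. Then for a chosen sequence $\alpha_k \to 0$, I would pick $R_k \geq R_\infty$ tending to infinity sufficiently fast, and $s_k \to 0$ with $s_k N_{R_k} \leq \alpha_k$, and define $Y_{n,k} := X_n^{R_k, s_k}$ via Lemma~\ref{lem: bounded ratio}. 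Property (iii) is immediate from the construction, while (i) follows from the measure bound $m_n(X_n \setminus Y_{n,k}) \leq s_k N_{R_k}\, m_n(Y_{n,k}) \leq \alpha_k m_n(X_n)$.

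The main obstacle is verifying the uniform Cheeger-type condition (ii) for \emph{all} nonempty $A \subseteq Y_{n,k}$ with $m_n(A) \leq \frac{1}{2}m_n(Y_{n,k})$, not merely those whose measure is a fixed proportion of $m_n(Y_{n,k})$. For ``large'' subsets with $m_n(A) \geq \alpha_0 m_n(Y_{n,k})$, the choice of $R_\infty$ together with the fact that $Y_{n,k}$ captures most of $m_n(X_n)$ immediately delivers expansion at radius $R_k$ with constant $c$. For ``small'' subsets, I would argue by contradiction: if $m_n(\partial_{R_k}^{Y_{n,k}} A) < c\, m_n(A)$, then $B := Y_{n,k} \setminus N_{R_k}^{Y_{n,k}}(A)$ has measure bounded below by a fixed fraction of $m_n(X_n)$ while $d(A,B) > R_k$, so Proposition~\ref{prop: char for the projection being quasi-local} forces $m_n(A) m_n(B) \to 0$ as $R_k \to \infty$, hence $m_n(A)$ must be extremely small. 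To close the remaining gap and handle genuinely small $A$, property (iii) is essential: bounded measure ratios on each $R_k$-ball make $m_n$ locally comparable to a scaled counting measure, so a standard iterated-neighborhood argument (classical in expander theory) propagates expansion from the large-scale regime down to all scales. A final diagonal tuning of $R_k, s_k, \alpha_k$ then yields the required sequences with a single uniform $c > 0$, completing the construction.
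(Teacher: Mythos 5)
Your $(2)\Rightarrow(1)$ argument is essentially the paper's: intersect $A$ with $Y_{n,k}$, note it retains most of the measure of $A$, split into two cases according to whether $m_n(A\cap Y_{n,k})$ exceeds $\tfrac12 m_n(Y_{n,k})$, and invoke Lemma~\ref{lem:annoying 1/2 bdd constant} in the second case. That direction is fine, up to harmless changes of constants.

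The $(1)\Rightarrow(2)$ direction has a genuine gap at exactly the step you flag as the main obstacle. Your contradiction argument via Proposition~\ref{prop: char for the projection being quasi-local} only shows that a set $A\subseteq Y_{n,k}$ violating (ii) must have $m_n(A)$ small relative to $m_n(X_n)$; it does not show that no such $A$ exists, and measured asymptotic expansion genuinely says nothing about sets of vanishing relative measure. The appeal to (iii) plus a ``standard iterated-neighborhood argument'' to propagate expansion down to all scales cannot work: already for counting measures (where (iii) is automatic) an expander graph with a pendant path of length $L_n=o(|X_n|)$ is a sequence of asymptotic expanders, yet the far half of the path has $R$-boundary ratio $2R/L_n\to 0$, so no subspace $Y_{n,k}$ containing the whole path can satisfy (ii). The missing idea --- the heart of the paper's proof --- is to \emph{remove a maximal non-expanding set}: inside $X_{n,\alpha}:=X_n^{R_\alpha,s_\alpha}$ (produced by Lemma~\ref{lem: bounded ratio}) one considers the family $\mathcal F_{n,\alpha}$ of subsets $A$ with $m_n(A)\le\tfrac12 m_n(X_{n,\alpha})$ and $m_n(\partial^{X_{n,\alpha}}_{R_\alpha}A)\le c\,m_n(A)$, picks a maximal element $F_{n,\alpha}$ (which has $m_n(F_{n,\alpha})<\alpha\, m_n(X_{n,\alpha})$ by the expansion hypothesis), and sets $Y_{n,\alpha}:=X_{n,\alpha}\setminus F_{n,\alpha}$. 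Maximality then forces \emph{every} nonempty $A\subseteq Y_{n,\alpha}$ with $m_n(A)\le\tfrac12 m_n(Y_{n,\alpha})$ to expand with constant $c/2$, since otherwise $A\sqcup F_{n,\alpha}$ would contradict the maximality of $F_{n,\alpha}$. Without this removal step your $Y_{n,k}=X_n^{R_k,s_k}$ satisfies (i) and (iii) but not (ii). A secondary issue: fixing a single $\alpha_0$ and radius $R_\infty$ from Lemma~\ref{lem: uniform asymptotic expansion for subspaces} is also insufficient, because the radius supplied there depends on the lower bound $\alpha$ on relative measure; the paper takes $R_\alpha\to\infty$ as $\alpha\to 0$ and builds one subspace $Y_{n,\alpha}$ per $\alpha$ before reindexing by $k$.
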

\begin{proof}
\emph{``(1) $\Rightarrow$ (2)''}:
Fix any $c\in (0,1)$. For each $\alpha \in (0,\frac{1}{2}]$, let $R_\alpha$ satisfy
the conclusion in Lemma \ref{lem: uniform asymptotic expansion for subspaces}
for $\beta=\frac{1}{2}$. Without loss of generality, we may assume that
$R_\alpha > 1$ and $\lim_{\alpha\to 0}R_\alpha=\infty$. Let $N_{R_\alpha}:=\sup_{n\in \N}\sup_{x\in X_n}|B(x,R_\alpha)|$ so that $1\leq N_{R_\alpha}<\infty$. If we set $s_\alpha:=\frac{1}{R_\alpha\cdot N_{R_\alpha}} \in (0,1)$, then Lemma~\ref{lem: bounded ratio} provides a subspace $X_{n,\alpha}:=X_n^{R_\alpha,s_\alpha}$ in $X_n$ for each
$n\in\N$ satisfying the following:
\begin{itemize}
  \item For any $x,y\in X_{n,\alpha}$ with $d_n(x,y) \leq R_\alpha$, we have $m_n(x) \thicksim_{s_\alpha} m_n(y)$;
  \item $m_n(X_n \setminus X_{n,\alpha}) \leq s_\alpha\cdot N_{R_\alpha}\cdot m_n(X_{n,\alpha}) = \frac{1}{R_\alpha} \cdot m_n(X_{n,\alpha})$.
\end{itemize}
From the second condition, we obtain that
$$m_n(X_{n,\alpha}) \geq \frac{R_\alpha}{R_\alpha+1}m_n(X_n) \geq \frac{1}{2}m_n(X_n)\ \text{for every $n\in \N$.}$$
So we know from Lemma~\ref{lem: uniform asymptotic expansion for subspaces} that for any $A \subseteq X_{n,\alpha}$ with $\alpha \cdot m_n(X_{n,\alpha}) \leq m_n(A) \leq \frac{1}{2} m_n(X_{n,\alpha})$, we have $m_n(\partial_{R_\alpha}^{X_{n,\alpha}} A) > c \cdot m_n(A)$.

Now we assume that $\alpha \in (0,\frac{c}{4+2c}]$. For each $n\in \N$, we consider a family $\mathcal{F}_{n,\alpha}$ of subsets in $X_{n,\alpha}$ given by
$$\mathcal{F}_{n,\alpha}:=\big\{A \subseteq X_{n,\alpha}: m_n(A) \leq \frac{1}{2} m_n(X_{n,\alpha}) \mbox{~and~} m_n(\partial^{X_{n,\alpha}}_{R_\alpha} A) \leq c\cdot m_n(A)\big\}.$$
We see that $\mathcal{F}_{n,\alpha}$ always contains the empty set $\emptyset$ so that it admits a maximal element directed by the inclusion. Let $F_{n,\alpha}$ be a maximal element in $\mathcal{F}_{n,\alpha}$, and set $Y_{n,\alpha}:=X_{n,\alpha} \setminus F_{n,\alpha}$. By construction, we have $m_n(F_{n,\alpha}) < \alpha \cdot m_n(X_{n,\alpha})$ for each $n\in\N$. Thus, it follows that
$$m_n(Y_{n,\alpha}) > (1-\alpha)m_n(X_{n,\alpha}) \geq \frac{(1-\alpha)\cdot R_\alpha}{R_\alpha+1}m_n(X_n)\ \text{for every $n\in\N$.}$$
In the following, we divide into two cases:

\emph{Case I.} Let $A \subseteq Y_{n,\alpha}$ satisfy $0<m_n(A) \leq \frac{1}{2} m_n(X_{n,\alpha}) - m_n(F_{n,\alpha})$. In particular, $
m_n(A)\leq \frac{1}{2} m_n(Y_{n,\alpha})$ and $m_n(A \sqcup F_{n,\alpha}) \leq \frac{1}{2} m_n(X_{n,\alpha})$. Since $\partial^{X_{n,\alpha}}_{R_\alpha} (A \sqcup F_{n,\alpha}) \subseteq (\partial^{X_{n,\alpha}}_{R_\alpha} F_{n,\alpha}) \cup (\partial^{Y_{n,\alpha}}_{R_\alpha} A)$, we deduce that
$$m_n\big(\partial^{X_{n,\alpha}}_{R_\alpha} (A \sqcup F_{n,\alpha})\big) \leq m_n\big( \partial^{X_{n,\alpha}}_{R_\alpha} F_{n,\alpha} \big) + m_n \big( \partial^{Y_{n,\alpha}}_{R_\alpha} A  \big) \leq c\cdot m_n(F_{n,\alpha}) + m_n \big( \partial^{Y_{n,\alpha}}_{R_\alpha} A  \big).$$
On the other hand, since $F_{n,\alpha}$ is maximal in $\mathcal{F}_{n,\alpha}$ and $A\neq \emptyset$ we have that 
$$m_n\big(\partial^{X_{n,\alpha}}_{R_\alpha} (A \sqcup F_{n,\alpha})\big) > c \cdot m_n(A \sqcup F_{n,\alpha}) = c \cdot m_n(A) + c \cdot m_n(F_{n,\alpha}).$$
Combining them together, we conclude that $m_n ( \partial^{Y_{n,\alpha}}_{R_\alpha} A ) > c \cdot m_n(A)$.

\emph{Case II.} Let $A \subseteq Y_{n,\alpha}$ satisfy $\frac{1}{2} m_n(X_{n,\alpha}) - m_n(F_{n,\alpha}) < m_n(A) \leq \frac{1}{2} m_n(Y_{n,\alpha})$. Since $\alpha \leq \frac{1}{4}$ and $m_n(F_{n,\alpha}) < \alpha \cdot m_n(X_{n,\alpha})$, we have that
$$\frac{1}{2} m_n(X_{n,\alpha})\geq  m_n(A) > \frac{1}{2} m_n(X_{n,\alpha}) - m_n(F_{n,\alpha}) > \big(\frac{1}{2}-\alpha\big)\cdot m_n(X_{n,\alpha}) \geq \alpha\cdot m_n(X_{n,\alpha}).$$
It follows that $m_n (\partial^{X_{n,\alpha}}_{R_\alpha} A) > c\cdot m_n(A)$. As $\partial^{X_{n,\alpha}}_{R_\alpha} A\subseteq (\partial^{Y_{n,\alpha}}_{R_\alpha} A)\sqcup F_{n,\alpha}$, we see that
$$m_n (\partial^{Y_{n,\alpha}}_{R_\alpha} A) \geq m_n (\partial^{X_{n,\alpha}}_{R_\alpha} A) - m_n(F_{n,\alpha}) > c\cdot m_n(A) - m_n(F_{n,\alpha}).$$
Combining the facts that $\alpha \leq \frac{c}{4+2c}$, $m_n(A) > \frac{1}{2} m_n(X_{n,\alpha}) - m_n(F_{n,\alpha})$ and $m_n(F_{n,\alpha}) < \alpha\cdot  m_n(X_{n,\alpha})$, we deduce that
$$m_n(A) > \frac{1}{c+2} \cdot m_n(X_{n,\alpha}),$$
which further implies that 
$$c\cdot m_n(A) - m_n(F_{n,\alpha}) > \frac{c}{2}\cdot m_n(A).$$
Hence, we obtain $m_n ( \partial^{Y_{n,\alpha}}_{R_\alpha} A ) > \frac{c}{2}\cdot m_n(A)$ in this case.

In conclusion, for a given $c\in (0,1)$ and any $\alpha\in (0,\frac{c}{4+2c}]$ there exist $R_\alpha>1$ with $\lim_{\alpha\to 0}R_\alpha=\infty$, $s_\alpha \in (0,1)$, and a sequence of subspaces $\{Y_{n,\alpha}\subseteq X_n\}_{n\in\N}$ such that for all $n\in\N$ and $\alpha\in(0,\frac{c}{4+2c}]$:
\begin{itemize}
\item $m_n(Y_{n,\alpha}) > \frac{(1-\alpha)\cdot R_\alpha}{R_\alpha+1}m_n(X_n)$;
\item for each $A \subseteq Y_{n,\alpha}$ with $0< m_n(A) \leq \frac{1}{2} m_n(Y_{n,\alpha})$ we have $m_n(\partial_{R_\alpha}^{Y_{n,\alpha}} A) > \frac{c}{2}\cdot  m_n(A)$;
\item for any $x,y \in Y_{n,\alpha}$ with $d_n(x,y) \leq R_\alpha$ we have $m_n(x) \thicksim_{s_\alpha} m_n(y)$.
\end{itemize}
Now we choose a sequence $\{\tilde{\alpha}_k\}_{k\in \N}$ in $(0,\frac{c}{4+2c}]$ such that $\tilde{\alpha}_k\to 0$, and set $R_k:=R_{\tilde{\alpha}_k}$,
$s_k:=s_{\tilde{\alpha}_k}$,
and $Y_{n,k}:=Y_{n,\tilde{\alpha}_k} \cap \supp(m_n)$. Since $\lim_{k\to \infty}\frac{(1-\tilde{\alpha}_k)R_k}{R_k+1} =1$, we complete the proof by letting $\alpha_k:=1-\frac{(1-\tilde{\alpha}_k)R_k}{R_k+1}$.

~\

\emph{``(2) $\Rightarrow$ (1)''}: We assume that condition (2) holds with the constants therein. Given $\alpha\in (0,\frac{1}{2}]$, we take a $k\in \N$ such
that $\alpha_k\leq \frac{\alpha}{8}$. For any $n \in \N$ and
$A \subseteq X_n$ with $\alpha \cdot m_n(X_n) \leq m_n(A) \leq \frac{1}{2}m_n(X_n)$, we observe that
\[
m_n(A\cap Y_{n,k}) \geq m_n(A) - m_n(X_n \setminus Y_{n,k}) \geq m_n(A) - \alpha_k \cdot m_n(X_n) \geq m_n(A) - \frac{\alpha}{2}\cdot m_n(X_n) \geq \frac{1}{2}m_n(A).
\]
In the following, we divide into two cases:

\emph{Case I.} When $m_n(A \cap Y_{n,k}) \leq \frac{1}{2}m_n(Y_{n,k})$: By the hypothesis of $(ii)$, we obtain that 
\[
  m_n(\partial^{X_n}_{R_k} A) \geq m_n(\partial^{Y_{n,k}}_{R_k} (A \cap Y_{n,k})) > c\cdot m_n(A \cap Y_{n,k}) \geq \frac{c}2\cdot m_{n}(A).
\]

\emph{Case II.} When $m_n(A \cap Y_{n,k}) > \frac{1}{2}m_n(Y_{n,k})$: Since
$m_n(Y_{n,k}) \geq (1-\alpha_k) \cdot m_n(X_n)\geq \frac{7}{8}m_n(X_n)$, we have that
$$m_n(A \cap Y_{n,k}) \leq m_n(A) \leq \frac{1}{2}m_n(X_n) \leq \frac{4}{7}m_n(Y_{n,k}).$$
As $\{(Y_{n,k},d_n,m_n)\}_{n\in \N}$ forms a sequence of measured asymptotic expanders for each fixed $k\in \N$, we can apply Lemma~\ref{lem:annoying 1/2 bdd constant} to obtain two positive constants
$c'$ and $R_k'$ only depending on $c$ and $R_k$ such that for any $k\in \N$ and $B\subseteq Y_{n,k}$ with $\frac{1}{2}m_n(Y_{n,k})\leq m_n(B) \leq \frac{4}{7}m_n(Y_{n,k})$, we have
$m_n(\partial^{Y_{n,k}}_{R_k'} B) > c'\cdot m_n(B)$. Hence, we obtain
$$m_n(\partial^{X_n}_{R_k'} A) \geq m_n\big(\partial^{Y_{n,k}}_{R_k'} (A \cap Y_{n,k}) \big)> c'\cdot m_n(A \cap Y_{n,k}) \geq \frac{c'}{2}\cdot m_n(A).$$

Combining these two cases, we conclude that $\{(X_n,d_n,m_{n})\}_{n \in \N}$ is a sequence of measured asymptotic expanders, as desired. 
\end{proof}

\begin{rem}
Note that the assumption of uniformly bounded geometry in Theorem~\ref{thm:structure thm} is only needed to guarantee the condition $(iii)$, which is automatically true when all $m_n$ are counting measures. This is the reason why \cite[Theorem~3.7]{structure} does not require the assumption of uniformly bounded geometry. Hence, Theorem~\ref{thm:structure thm} completely recovers \cite[Theorem~3.7]{structure}.
\end{rem}

We end this subsection by showing that a sequence of measured asymptotic expanders always admits a “uniform exhaustion” by measured expanders with bounded ratios in measure as follows:

Let $\{(X_n,d_n,m_n)\}_{n\in \N}$ be a sequence of measured asymptotic expanders with uniformly bounded geometry. For each fixed $k\in \N$, we consider a sequence of subspaces $\{Y_{n,k}\}_{n\in \N}$ satisfying the condition (2) in Theorem~\ref{thm:structure thm}. We will make $\{Y_{n,k}\}_{n\in \N}$ into a sequence of finite measured graphs in the following sense:

\begin{defn}
A sequence of finite measured metric spaces $\{(V_n,E_n,m_n)\}_{n\in \N}$ is called a sequence of \emph{finite measured graphs} if each $(V_n,E_n)$ is a finite (connected and undirected) graph equipped with the edge-path metric. 
 \end{defn} 
More concretely, we define the vertex set $V_{n,k}:= Y_{n,k}$, the edge set $E_{n,k}$ by $u\thicksim_{E_{n,k}} v$ if and only if $u,v\in V_{n,k}$ satisfy $0< d_n(u,v) \leq R_k$, and the measure $m_{n,k}:=m_n |_{V_{n,k}}$. Then $Y_{n,k} \subseteq \supp( m_n)$ and condition (ii) in Theorem~\ref{thm:structure thm} implies that $m_{n,k}$ has full support and $(V_{n,k},E_{n,k})$ is a finite (connected\footnote{Just as for expanders, the expansion condition \ref{thm:structure thm}(2)(ii) forces any non-empty subset to have a non-empty boundary.} and undirected) graph. Hence, $\{(V_{n,k}, E_{n,k}, m_{n,k})\}_{n\in \N}$ is a sequence of finite measured graphs. If we denote the edge-path metric on $V_{n,k}$ by $d_{n,k}$, then $d_n(u,v) \leq R_k \cdot d_{n,k}(u,v)$ for all $u,v \in V_{n,k}$ and $\partial^{V_{n,k}} A=\partial^{Y_{n,k}}_{R_k} A$ for $A\subseteq V_{n,k}$. In particular, $\{(V_{n,k}, E_{n,k})\}_{n\in \N}$ has uniformly bounded valency and the inclusion map $i_{n,k}:(V_{n,k},d_{n,k}) \to (X_n,d_n)$ is $R_k$-Lipschitz.
  
Due to the above construction and Theorem~\ref{thm:structure thm} (2), we have the following:
\begin{enumerate}
\item[(i)] $m_{n,k}$ has full support and $m_{n,k}(V_{n,k}) \geq (1-\alpha_k)\cdot m_n(X_n)$;
\item[(ii)] for each $A \subseteq V_{n,k}$ with $0<m_{n,k}(A) \leq \frac{1}{2}m_{n,k}(V_{n,k})$, then $m_{n,k}(\partial^{V_{n,k}} A) > c \cdot m_{n,k}(A)$;
\item[(iii)] there exists $s_k\in (0,1)$ such that for any adjacent pair of vertices $u\thicksim_{E_{n,k}} v$, we have $m_{n,k}(u) \thicksim_{s_k} m_{n,k}(v)$.
\end{enumerate}

The following definition is derived from condition (ii) above and it is a measured version of the classical notion of expanders (see Definition~\ref{defn:expanders} and we also refer the reader to \cite{measured_II} for more details about measured expanders):

\begin{defn}\label{def: measured expanders}
Let $\{(V_n,E_n,m_n)\}_{n \in \N}$ be a sequence of finite measured graphs. We say that $\{(V_n,E_n,m_n)\}_{n \in \N}$ is a sequence of \emph{measured expanders}\footnote{In the classical definition of expanders, the sequence of graphs is usually required to have $\lim_{n\to \infty}|V_n| =\infty$ and uniformly bounded valency. Here we take the liberty of excluding these restrictions for a greater generality.} if there exists $c>0$ such that for any $n \in \N$ and $A \subseteq V_n$ with $0< m_n(A) \leq \frac{1}{2} m_n(V_n)$, we have $m_n(\partial^{V_n} A) > c\cdot m_n(A)$. In this case, we also say that $\{(V_n,E_n,m_n)\}_{n \in \N}$ is a sequence of \emph{$c$-measured expanders} (a.k.a. measured $(c,1)$-asymptotic expanders, cf. Definition \ref{def: measured asymptotic expander}).
\end{defn}

\begin{rem}\label{rem:measured-expanders-are-connected}
In Definition~\ref{def: measured expanders}, observe that if all of $m_n$ have full support then expansion in measure will force all graphs to be connected.
\end{rem}

Before we state our structure theorem for measured asymptotic expanders by means of measured expanders, let us record the following lemma which is needed in Section~\ref{sec:examples}. 
\begin{lem}\label{lem:annoying 1/2 bdd expander}
Let $\{(V_n,E_n,m_n)\}_{n \in \N}$ be a sequence of $c$-measured expanders for some $c>0$. Then for any $\beta \in (0,1)$, there exists some $c_\beta>0$ depending only on $c$ and $\beta$ such that for any $n \in \N$ and $A \subseteq V_n$ with $0< m_n(A) \leq \beta\cdot m_n(V_n)$, we have $m_n(\partial^{V_n} A) > c_\beta\cdot m_n(A)$.
\end{lem}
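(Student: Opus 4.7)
The plan is to reduce this directly to Lemma~\ref{lem:annoying 1/2 bdd constant}, which we have already proved in the broader setting of measured asymptotic expanders. The key point is that a sequence of $c$-measured expanders is, tautologically, a sequence of measured $(\underbar c,\underbar R)$-asymptotic expanders with constant parameter functions $\underbar c\equiv c$ and $\underbar R\equiv 1$ (with respect to the edge-path metric, the $1$-boundary $\partial_1^{V_n}A$ coincides with the graph boundary $\partial^{V_n}A$). So the machinery required is already in place; we only need to package the conclusion across the whole range $m_n(A)\in (0,\beta m_n(V_n)]$ rather than just $[\tfrac12 m_n(V_n),\beta m_n(V_n)]$.

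First, I would dispose of the easy range $\beta\in(0,\tfrac12]$: here the hypothesis $m_n(A)\leq \beta m_n(V_n)\leq \tfrac12 m_n(V_n)$ triggers the defining inequality $m_n(\partial^{V_n}A)>c\cdot m_n(A)$ of a $c$-measured expander, so $c_\beta:=c$ works.

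For the remaining case $\beta\in(\tfrac12,1)$, I would split according to the size of $A$. If $0<m_n(A)\leq \tfrac12 m_n(V_n)$, the expander condition again gives $m_n(\partial^{V_n}A)>c\cdot m_n(A)$. If instead $\tfrac12 m_n(V_n)<m_n(A)\leq \beta\, m_n(V_n)$, I would invoke Lemma~\ref{lem:annoying 1/2 bdd constant} with the constant parameter functions $\underbar c\equiv c$ and $\underbar R\equiv 1$; its final clause provides $\tilde R=1$ and $\tilde c=\tfrac{(1-\beta)c}{2\beta}$ such that $m_n(\partial^{V_n}A)=m_n(\partial_1^{V_n}A)>\tilde c\cdot m_n(A)$. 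Merging the two subcases, the proof concludes with
\[
c_\beta:=\min\Bigl\{c,\tfrac{(1-\beta)c}{2\beta}\Bigr\},
\]
which depends only on $c$ and $\beta$, as required. There is essentially no obstacle: the genuine work (the clever decomposition $B:=V_n\setminus\Nd_1(A)$ and the two-way case split on $m_n(B)$) was already carried out in the proof of Lemma~\ref{lem:annoying 1/2 bdd constant}, so the present lemma is just a bookkeeping assembly of its output together with the expander definition.
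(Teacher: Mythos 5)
Your proposal is correct and follows essentially the same route as the paper: the paper's proof also handles $\beta\in(0,\tfrac12]$ and the subcase $m_n(A)\leq\tfrac12 m_n(V_n)$ directly from the definition of $c$-measured expanders, and then invokes Lemma~\ref{lem:annoying 1/2 bdd constant} with the constant parameter functions $\underbar{c}\equiv c$, $\underbar{R}\equiv 1$ for the remaining range. Your explicit constant $c_\beta=\min\{c,\tfrac{(1-\beta)c}{2\beta}\}$ is consistent with the ``furthermore'' clause of that lemma.
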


\begin{proof}
If $0<\beta\leq \frac{1}{2}$, the conclusion follows from the definition of $c$-measured expanders.

If $\frac{1}{2}<\beta <1$ and $m_n(A)\leq \frac{1}{2}m_n(V_n)$, it again follows from the definition of $c$-measured expanders. If $\frac{1}{2}<\beta <1$ and $m_n(A)> \frac{1}{2}m_n(V_n)$, it follows directly from Lemma~\ref{lem:annoying 1/2 bdd constant} for $\underbar{c} \equiv c$ and $\underbar{R} \equiv 1$.
\end{proof}

We are ready to prove the following corollary:
\begin{cor}\label{cor:MAE to ME}
Let $\{(X_n,d_n,m_n)\}_{n \in \N}$ be a sequence of finite measured metric spaces with uniformly bounded
geometry. Then the following are equivalent:
\begin{enumerate}
  \item $\{(X_n,d_n,m_{n})\}_{n \in \N}$ is a sequence of measured asymptotic expanders;
  \item there exist $c>0$, a sequence $\{\alpha_k\}_{k \in \N}$ in $(0,1)$ with $\alpha_k\to 0$, a sequence $\{s_k\}_{k\in \N}$ in $(0,1)$, and a positive sequence $\{R_k\}_{k\in \N}$ such that for any $n, k \in \N$ there exist a finite graph $(V_{n,k},E_{n,k})$ and a $R_k$-Lipschitz injective map $i_{n,k}:V_{n,k} \to X_n$ satisfying the following:
  \begin{itemize} 
    \item[(i)] the pullback measure\footnote{As $i_{n,k}$ is injective, the pullback measure is well-defined by the formula $i_{n,k}^*(m_n)(A)=m_n(i_{n,k}(A))$ for any $A\subseteq V_{n,k}$.} $m_{n,k}:=i_{n,k}^*(m_n)$ on $V_{n,k}$ has full support and $m_{n,k}(V_{n,k}) \geq (1-\alpha_k)\cdot m_n(X_n)$;
    \item[(ii)] for each $k\in \N$, $\{(V_{n,k},E_{n,k},m_{n,k})\}_{n\in \N}$ is a sequence of $c$-measured expanders with uniformly bounded valency;
    \item[(iii)] for any adjacent pair of vertices $u\thicksim_{E_{n,k}} v$, we have $m_{n,k}(u) \thicksim_{s_k} m_{n,k}(v)$.\footnote{Recall that given two positive numbers $a$ and $b$, we denote $a \thicksim_s b$ for some $s\in (0,1)$ if $sa \leq b \leq \frac{a}{s}$.}
      \end{itemize}
  \end{enumerate}
  The hypothesis of $(iii)$ is \emph{not} needed for the implication ``$(2)\Rightarrow (1)$''.
\end{cor}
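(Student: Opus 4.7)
My plan is to deduce both directions directly from the structure theorem (Theorem~\ref{thm:structure thm}), using the graph construction sketched in the paragraph preceding the corollary as the bridge between the two formulations.

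For the direction $(1)\Rightarrow(2)$, I would simply formalize the construction given just before the corollary statement. Start from the sequence $\{(X_n,d_n,m_n)\}_{n\in\N}$ and apply Theorem~\ref{thm:structure thm} to obtain constants $c>0$, sequences $\alpha_k\to 0$, $R_k>0$, $s_k\in(0,1)$, and subspaces $Y_{n,k}\subseteq \supp m_n$ satisfying (i)--(iii) of that theorem. Now declare $V_{n,k}:=Y_{n,k}$ with edge relation $u\sim_{E_{n,k}} v$ iff $0<d_n(u,v)\le R_k$, let $i_{n,k}:V_{n,k}\hookrightarrow X_n$ be the inclusion (which is $R_k$-Lipschitz for the edge-path metric on $V_{n,k}$ and is injective), and set $m_{n,k}:=i_{n,k}^*(m_n)=m_n|_{V_{n,k}}$. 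The condition that $m_n(Y_{n,k})\ge(1-\alpha_k)m_n(X_n)$ and $Y_{n,k}\subseteq\supp m_n$ gives (i). Since $\partial^{V_{n,k}}A=\partial^{Y_{n,k}}_{R_k}A$ by definition of the edges, Theorem~\ref{thm:structure thm}(ii) yields the $c$-measured expander condition; uniformly bounded valency follows from uniformly bounded geometry applied to $N_{R_k}$, giving (ii). Finally, (iii) is just Theorem~\ref{thm:structure thm}(iii) verbatim (noting that adjacent vertices in $E_{n,k}$ are at $d_n$-distance at most $R_k$).

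For the direction $(2)\Rightarrow(1)$, the strategy is to set $Y_{n,k}:=i_{n,k}(V_{n,k})\subseteq X_n$ and verify conditions (i) and (ii) of Theorem~\ref{thm:structure thm}(2) for these $Y_{n,k}$ (condition (iii) is unneeded for the backward implication in Theorem~\ref{thm:structure thm}). Condition (i) is immediate from the pullback construction: $m_n(Y_{n,k})=m_{n,k}(V_{n,k})\ge(1-\alpha_k)m_n(X_n)$, and injectivity together with full support of $m_{n,k}$ shows $Y_{n,k}\subseteq\supp m_n$. For condition (ii), given $A\subseteq Y_{n,k}$ with $0<m_n(A)\le\tfrac12 m_n(Y_{n,k})$, let $\tilde A:=i_{n,k}^{-1}(A)\subseteq V_{n,k}$, so $m_{n,k}(\tilde A)=m_n(A)\le\tfrac12 m_{n,k}(V_{n,k})$; the expander hypothesis gives $m_{n,k}(\partial^{V_{n,k}}\tilde A)>c\cdot m_{n,k}(\tilde A)$. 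The key geometric observation is that the $R_k$-Lipschitz property of $i_{n,k}$ forces $i_{n,k}(\partial^{V_{n,k}}\tilde A)\subseteq \partial^{Y_{n,k}}_{R_k}A$: if $v\in V_{n,k}\setminus\tilde A$ is adjacent to some $u\in\tilde A$, then $d_n(i_{n,k}(u),i_{n,k}(v))\le R_k$ while $i_{n,k}(v)\notin A$ by injectivity. Combined with injectivity of $i_{n,k}$, this yields $m_n(\partial^{Y_{n,k}}_{R_k}A)\ge m_{n,k}(\partial^{V_{n,k}}\tilde A)>c\cdot m_n(A)$, verifying Theorem~\ref{thm:structure thm}(2)(ii). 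Invoking the $(2)\Rightarrow(1)$ implication of Theorem~\ref{thm:structure thm} finishes the argument.

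I do not anticipate any serious obstacle. The main care needed is in the bookkeeping for $(2)\Rightarrow(1)$: one must observe that the $R_k$-Lipschitz-plus-injective hypothesis on $i_{n,k}$ is precisely what is required to transfer edge-boundaries in $V_{n,k}$ into $R_k$-boundaries in $Y_{n,k}\subseteq X_n$ without losing measure. Conversely, in $(1)\Rightarrow(2)$, the only mildly delicate point is making sure the graph $(V_{n,k},E_{n,k})$ is genuinely connected (hence the edge-path metric is well-defined); but this is automatic from the expansion condition and full support of $m_{n,k}$, as noted in Remark~\ref{rem:measured-expanders-are-connected}.
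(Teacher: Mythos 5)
Your proposal is correct and follows essentially the same route as the paper: the forward direction formalizes exactly the graph construction given in the paragraph preceding the corollary (which the paper invokes verbatim), and the backward direction sets $Y_{n,k}:=i_{n,k}(V_{n,k})$ and uses the inclusion $i_{n,k}(\partial^{V_{n,k}}\tilde A)\subseteq\partial^{Y_{n,k}}_{R_k}(i_{n,k}(\tilde A))$ to feed Theorem~\ref{thm:structure thm}(2), which is precisely the paper's argument. Your additional bookkeeping (pulling back $A$ to $\tilde A$, using injectivity to equate measures, and noting connectedness via Remark~\ref{rem:measured-expanders-are-connected}) is all sound and merely spells out what the paper leaves implicit.
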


\begin{proof}
``$(1)\Rightarrow (2)$'': It follows exactly from our preceding construction. 

``$(2)\Rightarrow (1)$'': For each fixed $n\in \N$, let $Y_{n,k}:=i_{n,k}(V_{n,k})$ for every $k\in \N$. Since $
i_{n,k}(\partial^{V_{n,k}} A)\subseteq \partial^{Y_{n,k}}_{R_k} (i_{n,k}(A))
$ for any $A\subseteq V_{n,k}$, it follows that $\{Y_{n,k}\}_{k\in \N}$ is a sequence of subspaces in $X_n$ satisfying $(i)$ and $(ii)$ in Theorem~\ref{thm:structure thm} as desired.
\end{proof}

As a direct consequence of Corollary~\ref{cor:MAE to ME}, we obtain the following:
\begin{cor}\label{cor:measured weak embedding}
A metric space $X$ does not measured weakly contain any measured expanders with uniformly bounded valency \emph{if and only if} it does not measured weakly contain any measured asymptotic expanders with uniformly bounded geometry.
\end{cor}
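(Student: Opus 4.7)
I would prove the biconditional by treating each direction separately. For the forward implication, the argument is essentially a matter of unpacking definitions: whenever the graphs in a sequence of measured expanders $\{(V_n, E_n, m_n)\}_n$ are equipped with their edge-path metrics, the expansion inequality for subsets of measure at most $\tfrac{1}{2} m_n(V_n)$ immediately verifies Definition~\ref{def: measured asymptotic expander} with the constant parameter functions $\underbar{c} \equiv c$ and $\underbar{R} \equiv 1$. Uniformly bounded valency translates to uniformly bounded geometry with respect to the edge-path metric, and a measured weak embedding of such graphs into $X$ is, verbatim, also a measured weak embedding of the associated sequence of measured asymptotic expanders.

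For the converse --- the substantive direction --- I would start from a measured weak embedding $\{f_n : X_n \to X\}_n$ of some sequence $\{(X_n, d_n, m_n)\}_n$ of measured asymptotic expanders with uniformly bounded geometry, witnessed by a control function $\rho_+$. The key tool is the structure theorem (Corollary~\ref{cor:MAE to ME}), which, for every $k \in \N$, produces finite graphs $(V_{n,k}, E_{n,k})$ together with injective $R_k$-Lipschitz maps $i_{n,k} : V_{n,k} \to X_n$ whose pullback measures $m_{n,k} := i_{n,k}^*(m_n)$ turn $\{(V_{n,k}, E_{n,k}, m_{n,k})\}_n$ into a sequence of $c$-measured expanders with uniformly bounded valency, and moreover satisfy the exhaustion bound $m_{n,k}(V_{n,k}) \geq (1-\alpha_k)\, m_n(X_n)$.

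I would then fix any one value of $k$ (say $k = 1$) and verify that the composed maps $g_n := f_n \circ i_{n,k} : V_{n,k} \to X$ constitute a measured weak embedding of $\{(V_{n,k}, E_{n,k}, m_{n,k})\}_n$ into $X$. Coarseness follows directly from composition, supplying the uniform control function $r \mapsto \rho_+(R_k \cdot r)$. For the measure-decay condition of Definition~\ref{de:measure.weak.embedding}, the injectivity of $i_{n,k}$ together with the fact that $m_{n,k}$ is its pullback yields
\[
m_{n,k}\bigl(g_n^{-1}(B(g_n(v), R))\bigr) \leq m_n\bigl(f_n^{-1}(B(f_n(i_{n,k}(v)), R))\bigr)
\]
for every $v \in V_{n,k}$ and every $R > 0$. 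Dividing by $m_{n,k}(V_{n,k}) \geq (1-\alpha_k)\, m_n(X_n)$ and taking the supremum over $v$ then forces this ratio to $0$ as $n \to \infty$, by the assumed measured weak embedding property of $\{f_n\}$.

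The only mild subtlety --- hardly an obstacle --- is that Corollary~\ref{cor:MAE to ME} supplies a family of measured expander sequences indexed by $k$ rather than a single sequence. Since our goal is merely to exhibit some sequence of measured expanders (with uniformly bounded valency) measured weakly contained in $X$, fixing any one $k$ suffices; in particular, condition $(iii)$ of Corollary~\ref{cor:MAE to ME} on comparability of measures at adjacent vertices plays no role in this argument.
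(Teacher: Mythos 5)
Your proof is correct and follows exactly the route the paper intends: the paper states this corollary as ``a direct consequence of Corollary~\ref{cor:MAE to ME}'' without further detail, and your argument---the trivial direction via viewing measured expanders as measured $(c,1)$-asymptotic expanders, and the substantive direction by fixing one $k$ in the structure theorem, composing $f_n\circ i_{n,k}$, and using injectivity of $i_{n,k}$ together with the bound $m_{n,k}(V_{n,k})\geq(1-\alpha_k)m_n(X_n)$---is precisely the omitted verification. Your observation that condition $(iii)$ of Corollary~\ref{cor:MAE to ME} is not needed here is also accurate.
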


\section{The Poincar\'{e} inequality and spectral gaps}

The aim of this section is to study the $L^p$-Poincar\'{e} inequality and spectral gaps for measured expanders with bounded measure ratios. To this end, we start by recalling the case of reversible random walks as established in \cite{measured_II}.

\subsection{Reversible random walks}\label{sec: random walks}
This subsection is devoted to recalling the $L^p$-Poincar\'{e} inequality and spectral gaps for reversible random walks. For more details about the theory of random walks, we refer to the textbooks \cite{BdlHV:2008,woess2000random}.

A \emph{random walk} or a \emph{Markov kernel} on a non-empty set $V$ is a map $r: V \times V \longrightarrow [0,\infty)$ such that $\sum_{v \in V} r(u,v) = 1$ for any $u\in V$. A \emph{stationary measure} $\mu$ for a random walk $r$ is a function $\mu: V \longrightarrow (0,\infty)$ such that $\mu(u)r(u,v)=\mu(v)r(v,u)$ for any $u,v\in V$. A random walk is called \emph{reversible} if it admits at least one stationary measure. In the reversible case, the map $a: V \times V \longrightarrow [0,\infty)$ defined by $a(u,v):=\mu(u)r(u,v)$ is called the \emph{conductance} function. Clearly, $a$ is \emph{symmetric} in the sense that $a(u,v)=a(v,u)$ for all $u,v\in V$, and we also have $\mu(u)=\sum_{v\in V}a(u,v)$ for all $u\in V$. Conversely, let $a: V\times V \to [0,\infty)$ be a symmetric map such that $\mu(u):=\sum_{v\in V} a(u,v)$ is positive and finite for each $u \in V$. Then the formula $r(u,v):=\frac{a(u,v)}{\mu(u)}$ defines a reversible random walk on $V$ with stationary measure $\mu$.

Given a reversible random walk $r$ on a non-empty set $V$ with a stationary measure $\mu$, we can endow $V$ with a (not necessarily connected\footnote{The constructed graph $(V,E)$ is connected if and only if the random walk $r$ is irreducible (see \cite[Example~5.1.1]{BdlHV:2008} for details).} but undirected) graph structure $(V,E)$ by requiring that $u\thicksim_E v$ is an edge if and only if $r(u,v)>0$ (which is also equivalent to that $r(v,u)>0$). Since the corresponding conductance function $a$ is symmetric, we define $a(e):=a(u,v)=a(v,u)$ for each edge $e\in E$ connecting vertices $u$ and $v$.
For $D \subseteq E$, its \emph{area} is defined to be $a(D):= \sum_{e\in D} a(e)$ and $a(\emptyset)=0$. Now let $m$ be an arbitrary measure on $V$ with full support such that $(V,E,m)$ forms a finite measured graph. Then we define the \emph{$(\mu,a,m)$-Cheeger constant} of $(V,E,m)$ to be
\[
\min\big\{\frac{a(\partial^E A)}{\mu(A)}: A\subseteq V \mbox{~with~} 0<m(A) \leq \frac{1}{2}m(V)\big\},
\]
where $\partial^E A$ is the edge boundary of $A$ (\emph{i.e.}, the set of edges with exactly one endpoint in $A$). Similarly to Remark~\ref{rem:measured-expanders-are-connected}, if the $(\mu,a,m)$-Cheeger constant is positive, then the graph $(V,E)$ is automatically connected.

Consider the following Hilbert space:
$$\ell^2(V;\mu):=\big\{f: V \to \C ~\big|~ \sum_{v\in V} |f(v)|^2\mu(v)< \infty\big\} \quad \mbox{with} \quad \langle f_1, f_2\rangle_\mu:=\sum_{v\in V} f_1(v)\overline{f_2(v)} \mu(v).$$
The \emph{graph Laplacian} $\Delta \in \B(\ell^2(V;\mu))$ associated to the reversible random walk $r$ is defined as
\begin{equation}\label{EQ10}
(\Delta f)(v):=f(v)-\sum_{u\in V:u\thicksim_E v}f(u)r(v,u)
\end{equation}
for $f\in \ell^2(V;\mu)$ and $u,v \in V$. In fact, the graph Laplacian $\Delta$ is a positive bounded operator with
norm at most $2$. When the constructed graph $(V,E)$ is connected, $\Delta f=0$ if and only if $f$ is constant (see \emph{e.g.}, \cite[Proposition~5.2.2]{BdlHV:2008} for details).

We end this subsection by stating the $L^p$-Poincar\'{e} inequality and the spectral gap of the graph Laplacian for reversible random walks as established in \cite{measured_II} (we refer the reader to \cite{alon1986eigenvalues,alon1985lambda1,dodziuk1984difference,Mat97,tanner1984explicit} for results of this type for classical expanders).  
\begin{prop}\cite[Proposition~3.2 and Proposition~3.8]{measured_II}\label{prop: Cheeger to Poincare}
Let $r$ be a reversible random walk on a non-empty and finite set $V$ with a stationary measure $\mu$ such that $a$ is the associated conductance function and $(V,E)$ is the associated graph structure. 

If $m$ is any non-trivial and finite measure on $V$ of full support such that the $(\mu,a,m)$-Cheeger constant $c$ is positive, then the following hold:
\begin{itemize}
\item[(1)] The spectrum of the graph Laplacian $\Delta \in \B(\ell^2(V;\mu))$ is contained in $\{0\} \cup [c^2/2, 2]$;
\item[(2)] For any $p\in [1,\infty)$, there exists a positive constant $c_p$ only depending on $c$ and $p$ such that for any map $f:V \to \C$ we have the following $L^p$-\emph{Poincar\'{e} inequality}:
\begin{equation}\label{EQ6}
\sum_{u,v\in V:u\thicksim_E v}|f(u)-f(v)|^p a(u,v) \geq c_p \sum_{u,v\in V} |f(u)-f(v)|^p \frac{\mu(u)\mu(v)}{\mu(V)}.
\end{equation}
\end{itemize}
\end{prop}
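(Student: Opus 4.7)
The proof goes by adapting the classical Cheeger/Alon--Milman argument to the present setting, where the ``small set'' constraint $m(A)\leq \tfrac{1}{2} m(V)$ is imposed in terms of $m$ while the Cheeger quotient is $a(\partial^E A)/\mu(A)$. Throughout, $m$ enters only to select which level sets are small enough for the Cheeger hypothesis to apply, while all arithmetic takes place in $\ell^2(V;\mu)$. Using the identity $\langle \Delta f, f\rangle_\mu = \sum_{\{u,v\}\in E}|f(u)-f(v)|^2 a(u,v)$, part~(1) reduces to a Poincar\'e-type inequality valid for every $f$ orthogonal to constants in $\ell^2(V;\mu)$, while the upper bound $\|\Delta\|\leq 2$ is the standard estimate from $\Delta = I - P$ with $\|P\|\leq 1$.

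For part~(1), I would first choose a ``median'' $\alpha\in\R$ such that both $\{f>\alpha\}$ and $\{f<\alpha\}$ have $m$-measure at most $\tfrac12 m(V)$ (possible since $m$ has full support and $V$ is finite), and split $f-\alpha=g_+ - g_-$ with $g_\pm = (f-\alpha)_\pm$. The co-area formula applied to $g_+^2$, together with the Cheeger hypothesis (applicable because every level set $\{g_+^2>t\}$ is contained in $\{f>\alpha\}$), gives
\[
\sum_{\{u,v\}\in E} |g_+^2(u)-g_+^2(v)|\, a(u,v) \;=\; \int_0^\infty a(\partial^E\{g_+^2>t\})\,dt \;\geq\; c\,\|g_+\|_\mu^2.
\]
Factoring $|g_+^2(u)-g_+^2(v)| = |g_+(u)-g_+(v)|\,|g_+(u)+g_+(v)|$ and applying Cauchy--Schwarz, together with the identity $\sum_{\{u,v\}}(g_+(u)^2+g_+(v)^2)\, a(u,v)= \|g_+\|_\mu^2$, produces $c^2\|g_+\|_\mu^2 \leq 2\sum_{\{u,v\}}(g_+(u)-g_+(v))^2 a(u,v)$, and similarly for $g_-$. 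The crucial step is then the pointwise inequality $(g_+(u)-g_+(v))^2 + (g_-(u)-g_-(v))^2 \leq (f(u)-f(v))^2$, which holds because $g_+$ and $g_-$ have disjoint supports; adding the two $g_\pm$-estimates and using $\|f-\alpha\|_\mu^2 = \|g_+\|_\mu^2+\|g_-\|_\mu^2$ yields $c^2\|f-\alpha\|_\mu^2 \leq 2\langle\Delta f,f\rangle_\mu$. Orthogonality of $f$ to constants gives $\|f\|_\mu^2\leq\|f-\alpha\|_\mu^2$, completing the spectral gap $\lambda_2\geq c^2/2$.

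For part~(2), I would run the same scheme with $g_+^p$ in place of $g_+^2$. The co-area formula yields $\sum_{\{u,v\}} |g_+^p(u)-g_+^p(v)|\, a(u,v)\geq c\,\|g_+\|_{L^p(\mu)}^p$. Using the elementary estimate $|x^p-y^p|\leq p\max(x,y)^{p-1}|x-y|$ for $x,y\geq 0$, H\"older's inequality with exponents $p$ and $p/(p-1)$, and the bound $\sum_{\{u,v\}}\max(g_+(u),g_+(v))^p a(u,v)\leq 2\|g_+\|_{L^p(\mu)}^p$, one obtains $\|g_+\|_{L^p(\mu)}^p\leq C_p\sum_{\{u,v\}}|f(u)-f(v)|^p a(u,v)$ for a constant $C_p$ depending only on $c$ and $p$, and symmetrically for $g_-$. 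The stated right-hand side in \eqref{EQ6} is then matched via the elementary inequality $|f(u)-f(v)|^p\leq 2^{p-1}(|f(u)-\alpha|^p+|f(v)-\alpha|^p)$: marginalising one variable collapses $\sum_{u,v\in V}|f(u)-f(v)|^p\mu(u)\mu(v)/\mu(V)$ to $2^p\|f-\alpha\|_{L^p(\mu)}^p$, giving the desired $L^p$-Poincar\'e inequality.

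The main obstacle will be keeping track of the sharp constant $c^2/2$ in the spectral gap: naively summing the $L^2$-bounds for $g_+$ and $g_-$ only yields $\lambda_2\geq c^2/4$. Recovering the factor of two hinges on exploiting the disjoint supports of $g_+$ and $g_-$ in the pointwise inequality displayed above, which is the one step that rewards some care.
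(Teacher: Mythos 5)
The paper does not prove this proposition internally --- it is quoted verbatim from \cite[Propositions~3.2 and 3.8]{measured_II} --- so there is no in-text proof to compare against; your argument is the standard Cheeger--Alon--Milman/co-area proof and, as far as I can check, it is correct. You have isolated precisely the right adaptation to the hybrid $(\mu,a,m)$-setting: the median $\alpha$ is chosen with respect to $m$, so that every superlevel set of $g_\pm$ lies in $\{f>\alpha\}$ and hence satisfies $m(\cdot)\leq \tfrac12 m(V)$, making the Cheeger bound $a(\partial^E A)\geq c\,\mu(A)$ available, while all norms are taken in $\ell^2(V;\mu)$ using $\mu(v)=\sum_u a(u,v)$. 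The individual steps check out: the Dirichlet form identity $\langle \Delta f,f\rangle_\mu=\sum_{\{u,v\}\in E}|f(u)-f(v)|^2a(u,v)$, the co-area computation, the bound $\sum_{\{u,v\}\in E}(g_+(u)^2+g_+(v)^2)a(u,v)\leq \|g_+\|_\mu^2$, the disjoint-support pointwise inequality $(g_+(u)-g_+(v))^2+(g_-(u)-g_-(v))^2\leq (f(u)-f(v))^2$ (which indeed reduces to $AB\leq 0$ for $A=g_+(u)-g_+(v)$, $B=g_-(u)-g_-(v)$ and is what saves the constant $c^2/2$ over $c^2/4$), and the $L^p$ variant via $|x^p-y^p|\leq p\max(x,y)^{p-1}|x-y|$ and H\"older. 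Two small points you should make explicit. First, the median and the decomposition $f-\alpha=g_+-g_-$ presuppose $f$ real-valued, whereas (2) is stated for $f:V\to\C$; this is repaired by applying the real case to $\mathrm{Re}\,f$ and $\mathrm{Im}\,f$ and using $|z|^p\leq 2^{p-1}(|\mathrm{Re}\,z|^p+|\mathrm{Im}\,z|^p)$, at the cost of a factor in $c_p$ depending only on $p$. Second, for (1) you should record why the Rayleigh-quotient bound on $\{1\}^\perp$ controls all of $\sigma(\Delta)\setminus\{0\}$: eigenfunctions for nonzero eigenvalues are orthogonal to the kernel, which contains (and, since positivity of the Cheeger constant forces connectedness, equals) the constants. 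With these two remarks added, the proof is complete.
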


\subsection{Spectral projections for measured expanders}\label{subsection: avgeraging projection of measured expanders}

We now return to the general case of measured expanders and explore analogous $L^p$-Poincar\'{e} inequality and spectral gaps.

However, measures involved in the definition of measured expanders might not come from reversible random walks in general so that we cannot directly apply Proposition~\ref{prop: Cheeger to Poincare}. To overcome this issue, we build auxiliary random walks whose stationary measures can uniformly control the original measures in measured expanders with bounded measure ratios on adjacent vertices. More precisely, we need the following key lemma:
\begin{lem}\label{lem:measured expanders to reversible ones}
Let $\{(V_n,E_n,m_n)\}_{n \in \N}$ be a sequence of $c$-measured expanders for some $c>0$ with valency uniformly bounded by $K\geq 1$. Assume that each $m_n$ has full support and that there exists $s\in (0,1)$ such that $m_n(u) \thicksim_s m_n(v)$ for any edge $u \thicksim_{E_n} v$ and any $n\in \N$. 

Then for every $n\in \N$ there exists a reversible random walk $r_n: V_n \times V_n \to [0,\infty)$ such that the associated stationary measure $\mu_n: V_n \to (0,\infty)$ and the associated conductance function $a_n:V_n \times V_n \to [0,\infty)$ satisfy the following:
\begin{enumerate}
  \item $a_n(u,v)=m_n(u)+m_n(v)$ whenever $u,v\in V_n$ such that $u \thicksim_{E_n} v$;
  \item For $u,v\in V_n$, we have $r_n(u,v)>0$ if and only if $u\thicksim_{E_n} v$; 
  \item For every $n\in \N$ and $u\in V_n$, we have
      $\frac{s}{K(1+s)}\mu_n(u)\leq m_n(u) \leq \frac{1}{1+s}\mu_n(u)$;
  \item For every $n\in \N$, the $(\mu_n,a_n,m_n)$-Cheeger constant is bounded below by $\frac{cs}{K}$.
\end{enumerate} 
\end{lem}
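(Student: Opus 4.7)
The plan is to let $m_n$ itself drive the construction of the conductance, which will make property (1) a tautology and turn the ratio hypothesis $m_n(u) \thicksim_s m_n(v)$ into a clean proportionality between $\mu_n$ and $m_n$. First, for each $n$ I would set $a_n(u,v) := m_n(u) + m_n(v)$ whenever $u \thicksim_{E_n} v$ and $a_n(u,v) := 0$ otherwise, then define $\mu_n(u) := \sum_{v \in V_n} a_n(u,v) = \sum_{v \thicksim u}(m_n(u)+m_n(v))$ and $r_n(u,v):=a_n(u,v)/\mu_n(u)$. Property (1) then holds by definition, and by Subsection~\ref{sec: random walks} the triple $(r_n,\mu_n,a_n)$ is a reversible random walk with stationary measure $\mu_n$. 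To make $r_n$ well-defined I need $\mu_n(u)>0$ for every $u$: the $c$-measured expander property together with the full support of $m_n$ forces $(V_n,E_n)$ to be connected whenever $|V_n|\geq 2$ (any disconnected component of $m_n$-measure at most $\tfrac{1}{2}m_n(V_n)$ would have empty vertex boundary, contradicting $c>0$), so every vertex has at least one neighbour, $\mu_n(u)>0$, and $r_n(u,v)>0$ iff $u\thicksim_{E_n} v$, giving (2); the trivial case $|V_n|=1$ is vacuous.

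For (3), I would fix $u\in V_n$ with $\deg(u)=d\in[1,K]$ and directly apply the ratio hypothesis to each adjacent pair, which gives $(1+s)m_n(u) \leq m_n(u)+m_n(v) \leq \tfrac{1+s}{s} m_n(u)$. Summing over the $d$ neighbours of $u$ yields $d(1+s)\, m_n(u) \leq \mu_n(u) \leq \tfrac{d(1+s)}{s}\, m_n(u)$, and combined with $1\leq d\leq K$ this rearranges to the claimed two-sided bound $\tfrac{s}{K(1+s)}\,\mu_n(u) \leq m_n(u) \leq \tfrac{1}{1+s}\,\mu_n(u)$.

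For (4), I would fix $A\subseteq V_n$ with $0<m_n(A)\leq \tfrac{1}{2}m_n(V_n)$. Each $v\in\partial^{V_n}A$ has at least one neighbour $u\in A$, and the corresponding boundary edge contributes $a_n(u,v) = m_n(u)+m_n(v) \geq (1+s)\,m_n(v)$ via the lower ratio bound $m_n(u)\geq s\, m_n(v)$. Summing over $v\in\partial^{V_n}A$ and invoking the $c$-measured expander property gives $a_n(\partial^E A) \geq (1+s)\,m_n(\partial^{V_n} A) \geq (1+s)c\, m_n(A)$; combining with the upper estimate $\mu_n(A) \leq \tfrac{K(1+s)}{s}\, m_n(A)$ from (3) then yields $a_n(\partial^E A)/\mu_n(A) \geq cs/K$, exactly the Cheeger bound claimed in (4). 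The main design choice is the ansatz $a_n(u,v)=m_n(u)+m_n(v)$; once fixed, the ratio hypothesis forces $\mu_n$ and $m_n$ to be proportional up to a factor depending only on $s$ and $K$, and both (3) and (4) reduce to routine bookkeeping. The one subtlety to watch is that the $(1+s)$ gained in the boundary estimate cancels exactly the $(1+s)$ lost in the upper bound for $\mu_n(A)$, which is what produces the clean constant $cs/K$ rather than $cs/(K(1+s))$.
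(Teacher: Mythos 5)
Your proposal is correct and follows essentially the same route as the paper: the same ansatz $a_n(u,v)=m_n(u)+m_n(v)$ on edges, the same derivation of the two-sided bound in (3) from the ratio hypothesis and the valency bound, and the same boundary-edge estimate $a_n(\partial^{E_n}A)\geq (1+s)\,m_n(\partial^{V_n}A)$ combined with (3) to get the Cheeger bound in (4). The only difference is that you spell out the connectedness argument and the degree bookkeeping slightly more explicitly than the paper does.
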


\begin{proof}
For every $n\in \N$, we consider the symmetric function $a_n: V_n \times V_n \to [0,\infty)$ defined by
\begin{equation*}
a_n(u,v):=
\begin{cases}
  ~m_n(u) + m_n(v), & \mbox{if~} u \thicksim_{E_n} v; \\
  ~0, & \mbox{otherwise,}
\end{cases}
\end{equation*}
for $u,v \in V_n$. Since $(V_n, E_n)$ is a connected finite graph by Remark~\ref{rem:measured-expanders-are-connected}, the associated stationary measure $\mu_n: V_n \to (0,\infty)$ is defined by
$$\mu_n(u):=\sum_{v \in V_n}a_n(u,v)\quad \text{for all $u\in V_n$. }$$
Then the formula $r_n(u,v):=a_n(u,v)/\mu_n(u)$ defines a reversible random walk on $V_n$ with the stationary measure $\mu_n$. Clearly, (1) and (2) hold by the above constructions. Moreover, (3) follows from the assumptions that $m_n(u) \thicksim_s m_n(v)$ for any edge $u\thicksim_{E_n} v$ and valencies are bounded by $K$ together with the connectedness of the graph $(V_n,E_n)$. 

As for (4), the assumption of $c$-measured expanders implies that for any $n\in \N$ and $A \subseteq V_n$ with $0<m_n(A) \leq \frac{1}{2}m_n(V_n)$ we have $m_n(\partial^{V_n} A) > c\cdot m_n(A)$. Hence,
\begin{eqnarray*}
a_n(\partial^{E_n} A) &=& \sum_{e \in \partial^{E_n} A} a_n(e) =  \sum_{u \in A, v\notin A, u\thicksim_{E_n} v} m_n(u)+m_n(v) \geq  \sum_{v \in \partial^{V_n} A}(1+s)m_n(v)\\
&=& (1+s)m_n(\partial^{V_n} A)> \frac{cs}{K}\mu_n(A),
\end{eqnarray*}
where we use (3) in the last inequality. Hence, we have verified (4) as desired.
\end{proof}

Now we are ready to prove the following $L^p$-Poincar\'{e} inequality for measured expanders with bounded measure ratios on adjacent vertices:
\begin{cor}\label{cor:Poincare for measured expanders}
Let $\{(V_n,E_n,m_n)\}_{n \in \N}$ be a sequence of $c$-measured expanders for some $c>0$ with valency uniformly bounded by $K\geq 1$. Assume that each $m_n$ has full support and there exists $s\in (0,1)$ such that $m_n(u) \thicksim_s m_n(v)$ for any edge $u \thicksim_{E_n} v$ and any $n\in \N$. 

Then for any $p\in [1,\infty)$, there exists a positive constant $c'$ only depending on $c,s,p,K$ such that for any $n\in \N$ and any map $f:V_n \to \C$, we have the following $L^p$-Poincar\'{e} inequality:
\begin{equation}\label{EQ13}
\sum_{u,v\in V_n:u\thicksim_{E_n} v}|f(u)-f(v)|^p (m_n(u) + m_n(v)) \geq c' \sum_{u,v\in V_n} |f(u)-f(v)|^p \frac{m_n(u)m_n(v)}{m_n(V_n)}.
\end{equation}
\end{cor}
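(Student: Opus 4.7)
The plan is to reduce to the reversible random walk case via the auxiliary random walks constructed in Lemma~\ref{lem:measured expanders to reversible ones}. Since the measures $m_n$ in a measured expander need not arise as stationary measures of any reversible random walk, we cannot feed them directly into the Poincar\'{e} inequality recorded in Proposition~\ref{prop: Cheeger to Poincare}. However, Lemma~\ref{lem:measured expanders to reversible ones} manufactures for each $n$ a reversible random walk $r_n$ on $V_n$ with conductance $a_n$ and stationary measure $\mu_n$, whose graph structure coincides with $(V_n,E_n)$ and which satisfies: $a_n(u,v) = m_n(u) + m_n(v)$ for adjacent $u,v$; $\mu_n$ and $m_n$ are mutually comparable with ratios depending only on $s$ and $K$; and the $(\mu_n, a_n, m_n)$-Cheeger constant is uniformly bounded below by $cs/K$.

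First I would apply Proposition~\ref{prop: Cheeger to Poincare}(2) to the triple $(\mu_n, a_n, m_n)$ with this uniform Cheeger constant $cs/K$ and the given $p$. This produces a positive constant $c_p$ depending only on $c, s, K, p$ such that
\[
\sum_{u,v\in V_n:\, u\thicksim_{E_n} v} |f(u)-f(v)|^p\, a_n(u,v) \;\geq\; c_p \sum_{u,v\in V_n} |f(u)-f(v)|^p\, \frac{\mu_n(u)\mu_n(v)}{\mu_n(V_n)}
\]
for every $f:V_n \to \C$. The left-hand side already matches the left-hand side of \eqref{EQ13} verbatim, thanks to property (1) of Lemma~\ref{lem:measured expanders to reversible ones}.

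It remains to replace the $\mu_n$-weights on the right-hand side with the desired $m_n$-weights. Property (3) of Lemma~\ref{lem:measured expanders to reversible ones} gives the two-sided comparison
\[
\tfrac{s}{K(1+s)}\,\mu_n(u) \;\leq\; m_n(u) \;\leq\; \tfrac{1}{1+s}\,\mu_n(u).
\]
Applying the upper bound vertexwise to $u$ and $v$ and the lower bound (summed over $V_n$) to $\mu_n(V_n)$, I obtain
\[
\frac{m_n(u)m_n(v)}{m_n(V_n)} \;\leq\; \frac{K}{s(1+s)}\cdot \frac{\mu_n(u)\mu_n(v)}{\mu_n(V_n)}.
\]
Substituting this into the Poincar\'{e} inequality above yields \eqref{EQ13} with $c' := \tfrac{s(1+s)}{K}\, c_p$, which indeed depends only on $c,s,p,K$.

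There is essentially no obstacle here beyond bookkeeping with constants: the main conceptual difficulty, namely fabricating a reversible random walk whose stationary measure uniformly controls the given measure (requiring precisely the bounded-ratio hypothesis $m_n(u)\sim_s m_n(v)$), is already packaged in Lemma~\ref{lem:measured expanders to reversible ones}. The only point to double-check is that Proposition~\ref{prop: Cheeger to Poincare}(2) is applied with a uniform lower bound on the Cheeger constant so that $c_p$ does not depend on $n$; this is exactly property (4) of Lemma~\ref{lem:measured expanders to reversible ones}.
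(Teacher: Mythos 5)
Your proposal is correct and follows essentially the same route as the paper: construct the auxiliary reversible random walk via Lemma~\ref{lem:measured expanders to reversible ones}, apply Proposition~\ref{prop: Cheeger to Poincare}(2) using the uniform Cheeger bound $cs/K$, and convert the $\mu_n$-weights to $m_n$-weights using the two-sided comparison, even arriving at the same constant $c'=\tfrac{s(1+s)c_p}{K}$. Nothing to add.
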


\begin{proof}
First of all, we notice that for every $n\in \N$ there exists a reversible random walk $r_n$ on $V_n$ such that the associated stationary measure $\mu_n$ and the associated conductance function $a_n$ satisfy (1)-(4) in Lemma~\ref{lem:measured expanders to reversible ones}. Moreover, Lemma~\ref{lem:measured expanders to reversible ones}(2) tells us that $(V_n,E_n)$ is exactly the associated graph structure coming from the reversible random walk $r_n$ on $V_n$. Because of Lemma~\ref{lem:measured expanders to reversible ones}(4), Proposition~\ref{prop: Cheeger to Poincare} implies that for any $p\in [1,\infty)$  there exists a positive constant $c_p$ only depending on $c,s,p,K$ such that for any $n\in \N$ and any map $f: V_n \to \C$ we have that
\begin{equation*}
\sum_{u,v\in V_n:u\thicksim_{E_n} v}|f(u)-f(v)|^p a_n(u,v) \geq c_p \sum_{u,v\in V_n} |f(u)-f(v)|^p \frac{\mu_n(u)\mu_n(v)}{\mu_n(V_n)}.
\end{equation*}
By Lemma~\ref{lem:measured expanders to reversible ones}(1) and (3), we deduce the inequality (\ref{EQ13}) for $c':=\frac{s(1+s)c_p}{K}>0$.
\end{proof}

Finally, we show that the graph Laplacian (up to conjugacy) associated to measured expanders with bounded measure ratios has a spectral gap as expected.

\begin{prop}\label{prop: spetral gap for measured expanders}
Let $\{(V_n,E_n,m_n)\}_{n \in \N}$ be a sequence of measured expanders with uniformly bounded valency. Assume that each $m_n$ has full support and there exists $s\in (0,1)$ such that $m_n(u) \thicksim_s m_n(v)$ for any edge $u \thicksim_{E_n} v$ and any $n\in \N$. Let $(V,d)$ be a coarse disjoint union of $\{V_n\}_{n \in \N}$ with respect to the edge-path metrics, and $m$ be the direct sum measure of $\{m_n\}_{n \in \N}$ on $V$. Then for every $n\in \N$ there exists a reversible random walk on $V_n$ with a stationary measure $\mu_n:V_n\to (0,\infty)$ such that the following hold:
\begin{itemize}
\item[(1)] If $\Delta_n\in \B(\ell^2(V_n;\mu_n))$ is the graph Laplacian defined as in (\ref{EQ10}) and $\Lambda_n:=W_n^*\Delta_n W_n \in \B(\ell^2(V_n;m_n))$ where $W_n: \ell^2(V_n;m_n) \to \ell^2(V_n;\mu_n)$ is the ``set-theoretic identity'' operator, then the spectrum of $\Lambda:=\bigoplus_{n\in \N}\Lambda_n \in \B(\ell^2(V;m))$ is contained in $\{0\} \cup [\kappa,\infty)$ for some $\kappa>0$;
\item[(2)] Let $\mathfrak{S}_n \in \B(\ell^2(V_n;m_n))$ be the orthogonal projection onto the space of constant functions on $V_n$, and $\mathfrak{S}:=\bigoplus_{n\in \N} \mathfrak{S}_n$. Then $\mathfrak{S}=\chi_{\{0\}}(\Lambda)$ is the spectral projection; \item[(3)] $\sup\{d(u,v): \langle \Lambda\delta_v, \delta_u \rangle_{\ell^2(V;m)} \neq 0\}\leq 1$.
\end{itemize}
\end{prop}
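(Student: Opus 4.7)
The plan is to combine Lemma~\ref{lem:measured expanders to reversible ones} with the $L^2$-Poincar\'{e} inequality of Corollary~\ref{cor:Poincare for measured expanders}. First, I apply Lemma~\ref{lem:measured expanders to reversible ones} to obtain, for each $n\in\N$, a reversible random walk $r_n$ on $V_n$ with stationary measure $\mu_n$ and conductance function satisfying $a_n(u,v)=m_n(u)+m_n(v)$ on edges. Part~(3) of that lemma gives a two-sided uniform comparison between $\mu_n$ and $m_n$, so $W_n:\ell^2(V_n;m_n)\to\ell^2(V_n;\mu_n)$ is a bounded linear isomorphism with $\|W_n\|$ and $\|W_n^{-1}\|$ bounded by constants depending only on $s$ and $K$. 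In particular $\|\Lambda_n\|\leq\|W_n\|^2\|\Delta_n\|\leq 2\|W_n\|^2$ is uniformly bounded in $n$.

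Next, I would compute the Dirichlet form of $\Lambda_n$. Since $W_n$ is the set-theoretic identity and $\Delta_n$ is the graph Laplacian of $r_n$, the standard Dirichlet form identity yields
\[
\langle\Lambda_n f,f\rangle_{m_n}=\langle\Delta_n f,f\rangle_{\mu_n}=\tfrac{1}{2}\sum_{u,v\in V_n:\,u\thicksim_{E_n}v}(m_n(u)+m_n(v))\,|f(u)-f(v)|^2
\]
for every $f\in\ell^2(V_n;m_n)$. In particular $\Lambda_n$ is positive self-adjoint, and since $(V_n,E_n)$ is connected by Remark~\ref{rem:measured-expanders-are-connected}, its kernel is exactly the space of constant functions on $V_n$. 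Applying Corollary~\ref{cor:Poincare for measured expanders} with $p=2$ provides a constant $c'>0$, depending only on $c$, $s$ and the valency bound $K$, such that
\[
\langle\Lambda_n f,f\rangle_{m_n}\geq\tfrac{c'}{2}\sum_{u,v\in V_n}|f(u)-f(v)|^2\,\frac{m_n(u)m_n(v)}{m_n(V_n)}.
\]
For $f$ orthogonal to the constants in $\ell^2(V_n;m_n)$, the identity $\sum_{u,v\in V_n}|f(u)-f(v)|^2 m_n(u)m_n(v)=2m_n(V_n)\|f\|_{m_n}^2$ (a routine expansion using $\sum_v f(v)m_n(v)=0$) reduces the right-hand side to $c'\|f\|_{m_n}^2$, so $\Lambda_n$ has a spectral gap of at least $\kappa:=c'$ above zero, uniformly in $n$.

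Since $\Lambda=\bigoplus_n\Lambda_n$ is block-diagonal with uniformly norm-bounded blocks, its spectrum lies in $\{0\}\cup[\kappa,\infty)$, proving (1). Moreover, $\ker\Lambda=\bigoplus_n\ker\Lambda_n$ is the closed span of the characteristic functions $\{\chi_{V_n}\}_{n\in\N}$, so $\chi_{\{0\}}(\Lambda)=\mathfrak{S}$, giving (2). For (3), the matrix entries of $\Delta_n$ with respect to $\{\delta_v\}_{v\in V_n}$ are supported on $\{(u,v):u=v\text{ or }u\thicksim_{E_n}v\}$, and $W_n^*$ acts diagonally in this basis via $(W_n^*g)(v)=g(v)\mu_n(v)/m_n(v)$, so the same support property is inherited by $\Lambda_n=W_n^*\Delta_n W_n$; hence $\langle\Lambda\delta_v,\delta_u\rangle_m\neq 0$ forces $u,v$ to lie in the same $V_n$ with $d(u,v)\leq 1$. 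The main technical subtlety is that the natural spectral gap for $\Delta_n$ concerns $\mu_n$-orthogonality to constants, whereas for $\Lambda_n$ we need $m_n$-orthogonality; routing through the $m_n$-weighted Poincar\'{e} inequality, rather than an abstract similarity argument, is what delivers the uniform gap on the ``correct'' Hilbert space.
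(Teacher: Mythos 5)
Your proof is correct, and for part (1) it takes a genuinely different (and more self-contained) route than the paper. The paper also begins with Lemma~\ref{lem:measured expanders to reversible ones} to build the reversible random walks, but then obtains the spectral gap of $\Lambda=\bigoplus_n W_n^*\Delta_nW_n$ by citing two further results from the companion paper \cite{measured_II} (Lemma~3.19 and Proposition~3.21 there), which handle the passage from the gap of $\Delta_n$ on $\ell^2(V_n;\mu_n)$ to the gap of the conjugated operator $\Lambda_n$ on $\ell^2(V_n;m_n)$. You instead compute the Dirichlet form $\langle\Lambda_n f,f\rangle_{m_n}=\tfrac12\sum_{u\thicksim v}a_n(u,v)|f(u)-f(v)|^2$ directly (using that $W_n$ is the set-theoretic identity) and feed it into the paper's own $L^2$-Poincar\'e inequality (Corollary~\ref{cor:Poincare for measured expanders}); the identity $\sum_{u,v}|f(u)-f(v)|^2m_n(u)m_n(v)=2m_n(V_n)\|f\|_{m_n}^2$ for $f\perp_{m_n}$ constants then gives a Rayleigh-quotient bound $\geq c'$ on $(\mathrm{const})^\perp$, which, combined with self-adjointness and the identification of $\ker\Lambda_n$ with the constants, yields the gap directly on the ``correct'' Hilbert space. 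This is exactly the subtlety you flag at the end, and your route resolves it without the extra external citation; the trade-off is that the paper's reference presumably packages the same Rayleigh-quotient argument in general form. Your treatments of (2) and (3) coincide in substance with the paper's (kernel of $\Lambda_n$ via invertibility of $W_n$ and connectedness; propagation $\leq 1$ from the support of $\Delta_n$ and the fact that $W_n,W_n^*$ are diagonal). All the individual computations you invoke (the two-sided comparison of $\mu_n$ and $m_n$ from Lemma~\ref{lem:measured expanders to reversible ones}(3), the formula $(W_n^*g)(v)=g(v)\mu_n(v)/m_n(v)$, and the uniform bound $\|\Lambda_n\|\leq 2\|W_n\|^2$) check out.
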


\begin{proof}
From Lemma~\ref{lem:measured expanders to reversible ones}(3), we know that there exists a constant $M>0$ such that $||W_n||\leq M$ for every $n\in \N$. In particular, we have $\Lambda=\bigoplus_{n\in \N}\Lambda_n \in \B(\ell^2(V;m))$. To see the desired spectral gap of $\Lambda$, we apply Lemma~\ref{lem:measured expanders to reversible ones}(4) together with \cite[Lemma~3.19 and Proposition ~3.21]{measured_II}, which establish bounds between the spectral gap of $\Lambda$ and the best constant in a Poincar\'e inequality, and between the latter quantity and the Cheeger constant. This completes (1).

Since each $\Delta_n$ is positive, so are $\Lambda_n$ and $\Lambda$. As $\chi_{\{0\}}(\Lambda)=\bigoplus_{n\in \N} \chi_{\{0\}}(\Lambda_n)$, it suffices to show $\mathfrak{S}_n=\chi_{\{0\}}(\Lambda_n)$ for every $n\in\N$. Clearly, $\chi_{\{0\}}(\Lambda_n)$ is the orthogonal projection onto the kernel of $\Lambda_n$. Hence, we need to check that the kernel of $\Lambda_n$ exactly consists of constant functions on $V_n$. Since the identity operator $W_n$ is invertible, it follows that if $\eta \in \ell^2(V_n;m_n)$ then $\Lambda_n\eta=0$ if and only if $\Delta_nW_n\eta=0$. Since each $(V_n,E_n)$ is connected and $W_n^{-1}$ is also the identity operator, this is also equivalent to $\eta$ is a constant function on $V_n$. Hence, we have verified (2).

It is easily seen from (\ref{EQ10}) that $\sup\{d(u,v): \langle \Delta_n\delta_v, \delta_u \rangle_{\ell^2(V_n;\mu_n)} \neq 0\}\leq 1$ for every $n\in \N$. This fact clearly ensures (3) as desired. 
\end{proof}

\section{Proofs of main Theorems}
In previous sections, we introduced all necessary ingredients to prove our main theorems. Now we are in the position to prove Theorem \ref{thm: main result}, which is the foundation of this paper. As its proof is rather technical, we split it into two parts: we first consider the case of uniform Roe algebras and then the case of Roe algebras.

\subsection{The case of uniform Roe algebras} In this subsection, we prove the following theorem for block-rank-one projections in uniform Roe algebras:

\begin{thm}\label{thm:main result uniform case}
Let $\{(X_n,d_n)\}_{n\in \N}$ be a sequence of finite metric spaces with uniformly bounded geometry and $X$ be their coarse disjoint union. Let $P \in \B(\ell^2(X))$ be a block-rank-one projection with respect to $\{(X_n,d_n)\}_{n\in \N}$. Then $P$ is quasi-local \emph{if and only if} $P$ belongs to the uniform Roe algebra $C^*_u(X)$.
\end{thm}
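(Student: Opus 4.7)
The ``if'' direction is immediate: every finite-propagation operator is quasi-local and quasi-locality is preserved under norm limits. For the converse, assume $P$ is quasi-local. The plan is to approximate $P$ in norm by projections that arise (after a suitable conjugation) as spectral projections of finite-propagation Laplacians furnished by Proposition~\ref{prop: spetral gap for measured expanders}, and then to obtain each approximant by functional calculus inside $C_u^*(X)$.

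Two preliminary reductions. By Corollary~\ref{cor: support quasi-local} and Lemma~\ref{lem: support Roe}, we may replace $X_n$ by $\supp(m_n)$ and hence assume each $m_n$ has full support; writing $\xi_n(x)=e^{i\theta_n(x)}\sqrt{m_n(x)}$ and conjugating by the diagonal unitary of multiplication by $e^{-i\theta_n(x)}$ (which has propagation zero), we may further assume $\xi_n=\sqrt{m_n}$. Proposition~\ref{prop: measured asymptotic expanders} identifies $\{(X_n,d_n,m_n)\}$ as a sequence of measured asymptotic expanders, so Corollary~\ref{cor:MAE to ME} supplies, for each $k\in\N$, subsets $V_{n,k}\subseteq X_n$ carrying $c$-measured expander graph structures $(V_{n,k},E_{n,k},m_{n,k})$ of uniformly bounded valency with edge-measure ratios bounded by $s_k\in(0,1)$, such that the inclusions $V_{n,k}\hookrightarrow X_n$ are $R_k$-Lipschitz and $m_n(V_{n,k})\geq 1-\alpha_k$ with $\alpha_k\to 0$. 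Set $\xi_n^{(k)}:=\xi_n\chi_{V_{n,k}}/\sqrt{m_n(V_{n,k})}$, let $Q_{n,k}$ be the rank-one projection onto $\C\xi_n^{(k)}$, and put $Q^{(k)}:=\bigoplus_n Q_{n,k}$. The standard formula for the distance between rank-one projections yields $\|P_n-Q_{n,k}\|=\sqrt{1-m_n(V_{n,k})}\leq\sqrt{\alpha_k}$ uniformly in $n$, hence $Q^{(k)}\to P$ in norm; it therefore suffices to prove that each $Q^{(k)}$ belongs to $C_u^*(X)$.

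To this end, apply Proposition~\ref{prop: spetral gap for measured expanders} to $\{(V_{n,k},E_{n,k},m_{n,k})\}_n$ to obtain a bounded positive operator $\Lambda^{(k)}\in\B(\ell^2(V_k;m_k))$ of propagation at most $R_k$ in $d$, with spectrum in $\{0\}\cup[\kappa_k,M_k]$ and with $\chi_{\{0\}}(\Lambda^{(k)})$ the block-diagonal projection onto the constant functions on each $V_{n,k}$. Conjugating by the unitary $U^{(k)}\colon\ell^2(V_k;m_k)\to\ell^2(V_k)$, $f\mapsto f\sqrt{m_k}$, yields an operator $\tilde\Lambda^{(k)}$ on $\ell^2(V_k)$ of the same spectrum and of propagation at most $R_k$; the bounded-measure-ratio condition (iii) in Corollary~\ref{cor:MAE to ME} keeps $U^{(k)}$ bounded with bounded inverse and preserves the spectral gap. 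Using $\xi_n=\sqrt{m_n}$, a direct computation identifies $U^{(k)}\chi_{\{0\}}(\Lambda^{(k)})(U^{(k)})^*$, extended by zero to $\ell^2(X)$, with $Q^{(k)}$. Since $\tilde\Lambda^{(k)}$ extended by zero is a bounded finite-propagation operator on $\ell^2(X)$, it lies in $C_u^*(V_k)\subseteq C_u^*(X)$; the Stone--Weierstrass theorem furnishes polynomials converging uniformly on the compact spectrum $\{0\}\cup[\kappa_k,M_k]$ to the indicator $\chi_{\{0\}}$, and performing the functional calculus inside $C_u^*(V_k)$ places $Q^{(k)}$ in $C_u^*(X)$, completing the proof. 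The main technical hurdle is precisely this final step: the functional calculus must be carried out on the $\ell^2(V_k)$ side and only then embedded into $C_u^*(X)$, for otherwise applying $\chi_{\{0\}}$ to the zero-extended $\tilde\Lambda^{(k)}$ viewed on all of $\ell^2(X)$ would spuriously add the identity on $\ell^2(X\setminus V_k)$.
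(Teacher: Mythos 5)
Your proposal is correct and follows essentially the same route as the paper's proof: reduce to full support, invoke Corollary~\ref{cor:MAE to ME} and Proposition~\ref{prop: spetral gap for measured expanders} to realise the projections onto constants on the expander pieces $V_{n,k}$ as functional-calculus images of finite-propagation operators, transport them to $\ell^2(X)$, and let $k\to\infty$. The only (harmless) differences are cosmetic: you strip the phases of $\xi_n$ by a propagation-zero diagonal unitary where the paper absorbs them into the covering isometry $U_{n,k}$, and your exact identity $\|P_n-Q_{n,k}\|=\sqrt{1-m_n(V_{n,k})}$ replaces the paper's cruder $2\|\eta_{n,k}-\xi_n\|$ estimate; your closing caveat about performing the functional calculus on $\ell^2(V_k)$ before extending by zero is exactly the role played by the unit $U_kU_k^*=\chi_{V_k}$ in the paper's argument.
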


\begin{proof}
The sufficiency holds trivially, so we only focus on the necessity. Suppose that $P=\bigoplus_{n\in \N} P_n$ is a quasi-local block-rank-one projection associated to the unit vectors $\{\xi_n\}_{n\in \N}$ in $\ell^2(X_n)$. Let $m_n$ be the associated probability measure on $X_n$ defined by $m_n(x):= |\xi_n(x)|^2$ for $x\in X_n$. By Lemma~\ref{cor: support quasi-local}, Lemma~\ref{lem: support Roe} and Proposition~\ref{prop: measured asymptotic expanders}, we may assume that each $m_n$ has full support on $X_n$ and $\{(X_n,d_n,m_n)\}_{n\in \N}$ is a sequence of measured asymptotic expanders. 

Then Corollary~\ref{cor:MAE to ME} guarantees that there exist $c>0$, a sequence $\{\alpha_k\}_{k \in \N}$ in $(0,1)$ with $\alpha_k\to 0$, a sequence $\{s_k\}_{k\in \N}$ in $(0,1)$, and a positive sequence $\{R_k\}_{k\in \N}$ such that for any $n, k \in \N$ there exist a finite graph $(V_{n,k},E_{n,k})$ with the edge-path metric $d_{n,k}$ and a $R_k$-Lipschitz injective map $i_{n,k}:V_{n,k} \to X_n$ satisfying the following:

  \begin{itemize}
 
    \item[(i)] the pullback measure $m_{n,k}:=i_{n,k}^*(m_n)$ on $V_{n,k}$ has full support and $m_{n,k}(V_{n,k}) \geq 1-\alpha_k$;
    \item[(ii)] for each $k\in \N$, $\{(V_{n,k},E_{n,k},m_{n,k})\}_{n\in \N}$ is a sequence of $c$-measured expanders with uniformly bounded valency;
    \item[(iii)] for any adjacent vertices $u\thicksim_{E_{n,k}} v$, we have $m_{n,k}(u) \thicksim_{s_k} m_{n,k}(v)$.
      \end{itemize}
  
Now we fix a $k\in \N$. Let $(V_k,d_k)$ be a coarse disjoint union of $\{(V_{n,k},d_{n,k})\}_{n\in \N}$ and $m_k$ be the direct sum measure of $\{m_{n,k}\}_{n\in \N}$ on $V_k$. For each $n\in \N$, let $\mathfrak{S}_{n,k} \in \B(\ell^2(V_{n,k}; m_{n,k}))$ be the orthogonal projection onto the space of constant functions on $V_{n,k}$ and $\mathfrak{S}_k:=\bigoplus_{n\in \N} \mathfrak{S}_{n,k} \in \B(\ell^2(V_k;m_k))$. Since $\{ m_{n,k}(v)^{-\frac{1}{2}}\delta_v\}_{v\in V_{n,k}}$ forms an orthonormal basis in $\ell^2( V_{n,k};m_{n,k})$ and $\{m_{n,k}(v)^{-\frac{1}{2}}\xi_n(i_{n,k}(v))\delta_{i_{n,k}(v)}\}_{v\in V_{n,k}}$ forms an orthonormal set in $\ell^2(X_n)$, we define a linear isometry $U_{n,k}: \ell^2(V_{n,k}; m_{n,k}) \rightarrow \ell^2(X_n)$ by the formula
$$m_{n,k}(v)^{-\frac{1}{2}}\delta_v \mapsto m_{n,k}(v)^{-\frac{1}{2}}\xi_n(i_{n,k}(v))\delta_{i_{n,k}(v)}, \text{ for $v\in V_{n,k}$.}$$
Thus, we have a linear isometry
$$U_k:=\bigoplus_{n\in \N} U_{n,k}: \ell^2(V_k; m_k) \rightarrow \ell^2(X).$$

From Proposition~\ref{prop: spetral gap for measured expanders}, there exists an operator $\Lambda_k \in \B(\ell^2(V_k;m_k))$ such that $\mathfrak{S}_k$ belongs to the unital $C^*$-subalgebra generated by $\Lambda_k$ in $\B(\ell^2(V_k;m_k))$ and 
\begin{align}\label{prop at most 1}
\sup\{d_k(u,v): \langle \Lambda_k\delta_v, \delta_u \rangle_{\ell^2(V_k;m_k)} \neq 0\}\leq 1. 
\end{align}
Hence, each $U_k \mathfrak{S}_k U_k^*$ belongs to the unital $C^*$-subalgebra generated by $U_k \Lambda_k U_k^*$ in $\B(\ell^2(X))$. On the other hand, the bounded operator $U_k \Lambda_k U_k^*$ has propagation at most $R_k$ as $i_{n,k}$ is $R_k$-Lipschitz together with (\ref{prop at most 1}). Consequently, both $U_k \Lambda_k U_k^*$ and $U_k \mathfrak{S}_k U_k^*$ belong to the uniform Roe algebra $C^*_u(X)$ for every $k\in \N$. In order to conclude $P\in C_u^*(X)$, it suffices to show that $U_k \mathfrak{S}_k U_k^*\to P$ as $k\to \infty$. 

Recall that $\mathfrak{S}_{n,k}$ in $\B(\ell^2(V_{n,k}; m_{n,k}))$ is the orthogonal projection onto the subspace spanned by the unit vector $\zeta_{n,k}\in \ell^2(V_{n,k};m_{n,k})$, where $\zeta_{n,k}(v)=\frac{1}{\sqrt{m_{n,k}(V_{n,k})}}=\frac{1}{\sqrt{m_n(\mathrm{Im} i_{n,k})}}$ for every $v\in V_{n,k}$. It follows easily that $U_{n,k}\mathfrak{S}_{n,k}U_{n,k}^*$ is the orthogonal projection onto the one-dimensional subspace spanned by the unit vector $\eta_{n,k}:=U_{n,k}\zeta_{n,k}\in \ell^2(X_n)$. A direct calculation gives us
\begin{equation*}
\eta_{n,k}(x)=
\begin{cases}
  ~\frac{\xi_n(x)}{\sqrt{m_n(\mathrm{Im} i_{n,k})}}\ , & x\in \mathrm{Im} i_{n,k}; \\
  ~0\ , & \mbox{otherwise}.
\end{cases}
\end{equation*}
Recall that $\xi_n$ is the unit vector in $\ell^2(X_n)$ corresponding to the rank-one projection $P_n$ in $\B(\ell^2(X_n))$.

As $1\geq m_n(\mathrm{Im} i_{n,k}) \geq 1-\alpha_k$ for all $n\in \N$, we have that
\begin{eqnarray*}
\|\eta_{n,k}-\xi_n\|^2 &=& \sum_{x\in \mathrm{Im} i_{n,k}} \big| \frac{\xi_n(x)}{\sqrt{m_n(\mathrm{Im} i_{n,k})}} - \xi_n(x) \big|^2 + \sum_{x \in X_n \setminus \mathrm{Im} i_{n,k}} |\xi_n(x)|^2\\
&=& \big( \frac{1}{\sqrt{m_n(\mathrm{Im} i_{n,k})}} -1 \big)^2\cdot m_n(\mathrm{Im} i_{n,k}) + m_n(X_n \setminus \mathrm{Im} i_{n,k})\\
&\leq & \big( \frac{1}{\sqrt{1-\alpha_k}} -1 \big)^2 + \alpha_k.
\end{eqnarray*}
Hence, it follows that
\begin{align*}
\|U_k \mathfrak{S}_k U_k^* - P\|^2 &=\sup_{n\in \N}\|U_{n,k}\mathfrak{S}_{n,k}U_{n,k}^*-P_n\|^2\\
     &\leq 4 \sup_{n\in \N}\|\eta_{n,k}-\xi_n\|^2 \\
     &\leq 4\left( \big( \frac{1}{\sqrt{1-\alpha_k}} -1 \big)^2 + \alpha_k\right)\rightarrow 0,\text{\ as $k\to \infty$.}
\end{align*}
Consequently, we deduce that $P$ belongs to $C_u^*(X)$ as desired.
\end{proof}

\subsection{The case of Roe algebras}
In this subsection, we complete Theorem~\ref{thm: main result}. In fact, we prove a stronger result (see Theorem~\ref{thm:main result Roe}). In order to state Theorem~\ref{thm:main result Roe}, we need the following notion:

\begin{defn}\label{defn:uniformization}
Let $(X,d)$ be a coarse disjoint union of a sequence of finite metric spaces $\{(X_n,d_n)\}_{n\in \N}$ and $\H_0$ be a Hilbert space. Let $P=\bigoplus_{n\in \N} P_n \in \B(\ell^2(X;\H_0))$ be a block-rank-one projection with respect to $\{X_n\}_{n\in \N}$. If $\xi_n$ is a unit vector in $\ell^2(X_n;\H_0)$ associated to the rank-one projection $P_n$, then the unit vector $\widetilde{\xi}_n \in \ell^2(X_n)$, defined by $\widetilde{\xi}_n(x):=\|\xi_n(x)\|$ for $x\in X_n$, is called the \emph{uniformization} of $\xi_n$. In this case, the rank-one projection $\widetilde{P}_n:=\langle \ \cdot\  , \widetilde{\xi}_n \rangle \widetilde{\xi}_n$ in $\B(\ell^2(X_n))$ is called the uniformization of $P_n$, and the block-rank-one projection $\widetilde{P}:=\bigoplus_{n\in \N}\widetilde{P}_n$ in $\B(\ell^2(X))$ is called the uniformization of $P$. 
\end{defn}

\begin{rem}\label{measure to uniformization}
It is clear that $P_n$ and $\widetilde{P}_n$ in Definition~\ref{defn:uniformization} have the same associated probability measure on $X_n$ given by $X_n \ni x\mapsto ||\xi_n(x)||^2=|\widetilde{\xi}_n(x)|^2$. Hence, $P$ is a ghost \emph{if and only if} $\widetilde{P}$ is a ghost by Lemma~\ref{lem: ghost}.
\end{rem}

The next lemma follows directly from Lemma~\ref{lem: quasi-locality of the projection} and the previous remark.

\begin{lem}\label{lem:equiv of QL}
Let $(X,d)$ be a coarse disjoint union of a sequence of finite metric spaces $\{(X_n,d_n)\}_{n\in \N}$, and $\H_0$ be a Hilbert space. Let  $P\in \B(\ell^2(X;\H_0))$ be a block-rank-one projection with respect to $\{X_n\}_{n\in \N}$, and let $\widetilde{P}\in \B(\ell^2(X))$ be the uniformization of $P$. Then $P$ is quasi-local \emph{if and only if} $\widetilde{P}$ is quasi-local.
\end{lem}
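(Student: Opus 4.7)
The plan is to reduce the equivalence of quasi-locality for $P$ and $\widetilde{P}$ to the observation that, for any subsets $A,B \subseteq X$, the norm $\|\chi_A P \chi_B\|$ coincides with $\|\chi_A \widetilde{P}\chi_B\|$. This is a two-line check using the block-diagonal structure of $P$ and Lemma~\ref{lem: quasi-locality of the projection}, so the argument will essentially reduce to bookkeeping.

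First, I would decompose subsets along the partition $X=\bigsqcup_n X_n$: given $A,B\subseteq X$, write $A_n:=A\cap X_n$ and $B_n:=B\cap X_n$. Since $\chi_A$ and $\chi_B$ respect the orthogonal decomposition $\ell^2(X;\H_0)=\bigoplus_n \ell^2(X_n;\H_0)$, and similarly $\ell^2(X)=\bigoplus_n\ell^2(X_n)$, we have
\[
\chi_A P \chi_B=\bigoplus_{n\in\N}\chi_{A_n}P_n\chi_{B_n}\quad\text{and}\quad \chi_A\widetilde P\chi_B=\bigoplus_{n\in\N}\chi_{A_n}\widetilde P_n\chi_{B_n}.
\]
Hence $\|\chi_A P\chi_B\|=\sup_n\|\chi_{A_n}P_n\chi_{B_n}\|$, and the analogous identity holds for $\widetilde P$.

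Next, I apply Lemma~\ref{lem: quasi-locality of the projection} to each summand. That lemma yields $\|\chi_{A_n}P_n\chi_{B_n}\|=\sqrt{m_n(A_n)\,m_n(B_n)}$, where $m_n(x)=\|P_n\delta_x\|^2=\|\xi_n(x)\|^2$. On the other side, the same lemma applied to the rank-one projection $\widetilde P_n$ on $\ell^2(X_n)$ with unit vector $\widetilde\xi_n$ gives $\|\chi_{A_n}\widetilde P_n\chi_{B_n}\|=\sqrt{\widetilde m_n(A_n)\,\widetilde m_n(B_n)}$, where $\widetilde m_n(x)=|\widetilde\xi_n(x)|^2$. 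By the definition of the uniformization, $\widetilde m_n=m_n$ (this is exactly Remark~\ref{measure to uniformization}), so the two quantities coincide. Combining this with the display above yields $\|\chi_A P\chi_B\|=\|\chi_A \widetilde P\chi_B\|$ for all $A,B\subseteq X$.

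Finally, since quasi-locality (Definition~\ref{defn:quasi-local}) is defined purely in terms of the norms $\|\chi_A T\chi_B\|$ for pairs $(A,B)$ with $d(A,B)>R$, the equality just established shows that $P$ satisfies the quasi-locality criterion with parameter $R$ for a given $\varepsilon>0$ if and only if $\widetilde P$ does. This gives the equivalence, and no genuine obstacle arises: the entire argument is a direct consequence of Lemma~\ref{lem: quasi-locality of the projection} together with the observation that $P$ and $\widetilde P$ have identical associated measures.
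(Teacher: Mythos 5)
Your proof is correct and is essentially the argument the paper intends: the paper derives the lemma directly from Lemma~\ref{lem: quasi-locality of the projection} together with the observation (Remark~\ref{measure to uniformization}) that $P_n$ and $\widetilde{P}_n$ share the same associated measure, which is exactly what you have spelled out via the block decomposition $\|\chi_A P\chi_B\|=\sup_n\|\chi_{A_n}P_n\chi_{B_n}\|$. No issues.
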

As expected, we also have the following counterpart to Lemma~\ref{lem:equiv of QL}:
\begin{lem}\label{lem:equiv of Roe}
Let $(X,d)$ be a coarse disjoint union of a sequence of finite metric spaces $\{(X_n,d_n)\}_{n\in \N}$ with uniformly bounded geometry, and $\H_0$ be an infinite-dimensional separable Hilbert space. Let $P\in \B(\ell^2(X;\H_0))$ be a block-rank-one projection with respect to $\{X_n\}_{n\in \N}$. If $\widetilde{P}\in \B(\ell^2(X))$ is the uniformization of $P$, then $P\in C^*(X)$ \emph{if and only if} $\widetilde{P} \in C^*_u(X)$.
\end{lem}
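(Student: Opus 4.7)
The plan is to handle the two directions by different techniques: the direction $(\Rightarrow)$ will be reduced to the scalar case already established in Theorem~\ref{thm:main result uniform case}, while the direction $(\Leftarrow)$ will be handled by a direct isometric conjugation argument lifting from $\ell^2(X)$ up to $\ell^2(X;\H_0)$.

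For $(\Rightarrow)$: Every finite-propagation operator is trivially quasi-local, and quasi-locality is stable under norm limits, so $P\in C^*(X)$ forces $P$ to be quasi-local. By Lemma~\ref{lem:equiv of QL}, the uniformization $\widetilde{P}$ is then quasi-local as well. But $\widetilde{P}$ is a block-rank-one projection on the \emph{scalar-valued} space $\ell^2(X)$, so Theorem~\ref{thm:main result uniform case} applies directly and delivers $\widetilde{P}\in C^*_u(X)$.

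For $(\Leftarrow)$: I would exhibit an isometry $V:\ell^2(X)\to\ell^2(X;\H_0)$ of zero propagation satisfying $V\widetilde{P}V^* = P$. For each $x\in X_n$ with $\xi_n(x)\neq 0$, set $u_x := \xi_n(x)/\|\xi_n(x)\|\in\H_0$; for the remaining $x$, let $u_x$ be any fixed unit vector in $\H_0$. Define $V\delta_x := \delta_x\otimes u_x$. A direct computation on each block shows that $V\widetilde{\xi}_n = \xi_n$ (the zeros of $\xi_n$ and $\widetilde{\xi}_n$ agree, so the ambiguity in the choice of $u_x$ there is harmless), whence $V\widetilde{P}V^* = P$. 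Moreover, for any $T\in\B(\ell^2(X))$ one has
\[
(VTV^*)_{x,y} \;=\; T_{x,y}\cdot (u_x \otimes u_y^*) \;\in\; \B(\H_0),
\]
which is a rank-one (hence compact) operator, while $\ppg(VTV^*)\leq \ppg(T)$. Therefore $V(\cdot)V^*$ maps finite-propagation elements of $\B(\ell^2(X))$ into finite-propagation locally compact operators on $\ell^2(X;\H_0)$. Approximating $\widetilde{P}$ in norm by finite-propagation $T_k\in C^*_u(X)$ and passing to $VT_kV^*\to V\widetilde{P}V^* = P$ gives $P\in C^*(X)$.

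Neither direction presents a serious obstacle: the forward direction is essentially free given the scalar analogue (Theorem~\ref{thm:main result uniform case}) together with Lemma~\ref{lem:equiv of QL}, while the reverse direction relies only on the observation that conjugation by the pointwise-defined isometry $V$ automatically yields rank-one matrix entries, supplying the local compactness needed to land in $C^*(X)$ rather than merely in the quasi-local algebra. The most delicate point to verify is simply that $V\widetilde{\xi}_n = \xi_n$ despite the freedom in defining $u_x$ outside $\supp\xi_n$, which is immediate from the definitions.
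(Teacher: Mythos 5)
Your proof is correct, and the backward direction $(\Leftarrow)$ is essentially the paper's own argument: the authors build the same propagation-zero isometry (they call it $W$, defined by $\delta_x\mapsto \delta_x\otimes \xi_n(x)/\|\xi_n(x)\|$ after first using Lemma~\ref{lem: support Roe} to reduce to the full-support case), verify $W\widetilde{\xi}_n=\xi_n$, and observe that conjugation produces rank-one matrix entries, hence locally compact finite-propagation operators. Your handling of the points where $\xi_n(x)=0$ by an arbitrary unit vector is a harmless variant that sidesteps the reduction via Lemma~\ref{lem: support Roe}. Where you genuinely diverge is the forward direction: the paper gets $\widetilde{P}=W^*PW\in C^*_u(X)$ for free from the same isometry, since $T\mapsto W^*TW$ sends finite-propagation operators to finite-propagation operators on $\ell^2(X)$ and extends to a bounded map $C^*(X)\to C^*_u(X)$; you instead route through quasi-locality, Lemma~\ref{lem:equiv of QL}, and the deep Theorem~\ref{thm:main result uniform case}. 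This is logically sound and not circular (Theorem~\ref{thm:main result uniform case} is established independently of this lemma, which is only needed afterwards to assemble Theorem~\ref{thm:main result Roe}), and it does use the hypothesis of uniformly bounded geometry, which is available. But it makes an elementary, purely algebraic lemma depend on the main analytic theorem of the paper, and it obscures the fact that the isometry you have already constructed yields the forward implication in one line via $\widetilde{P}=V^*PV$ with $\ppg(V)=0$. I would recommend replacing your forward direction with that observation; otherwise the argument stands.
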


\begin{proof}
Let $\xi_n$ be a unit vector in $\ell^2(X_n;\H_0)$ associated to the rank-one projection $P_n$ for $n\in \N$, and we write $\xi_n = \sum_{x\in X_n} \delta_x \otimes \xi_n(x)$. Recall that its uniformization vector $\widetilde{\xi}_n \in \ell^2(X_n)$ is defined by $\widetilde{\xi}_n(x)=\|\xi_n(x)\|$ for $x\in X_n$. By Lemma~\ref{lem: support Roe}, we can assume that both $\xi_n$ and $\widetilde{\xi}_n$ have full support in $X_n$ for every $n\in \N$. 

For each $n\in \N$, we consider the linear isometry $W_n: \ell^2(X_n) \longrightarrow \ell^2(X_n;\H_0)$ defined by
\[
W_n(\delta_x) := \frac{\delta_x \otimes \xi_n(x)}{\|\xi_n(x)\|}, \text{\ $x\in X_n$.}
\]
Its adjoint operator $W_n^*: \ell^2(X_n;\H_0) \to \ell^2(X_n)$ is given by the formula
\[
W_n^*(\delta_x \otimes \eta)=\frac{\langle \eta, \xi_n(x) \rangle}{\|\xi_n(x)\|} \cdot \delta_x, \text{ for $x\in X_n$ and $\eta \in \H_0$.}
\]
It follows that
\begin{eqnarray*}
W_n(\widetilde{\xi}_n) &=& \sum_{x\in X_n} \|\xi_n(x)\| \cdot W_n(\delta_x) = \sum_{x\in X_n} \|\xi_n(x)\| \cdot \frac{\delta_x \otimes \xi_n(x)}{\|\xi_n(x)\|} = \sum_{x\in X_n} \delta_x \otimes \xi_n(x) = \xi_n;\\
W^*_n(\xi_n) &=& \sum_{x\in X_n} W_n^*(\delta_x \otimes \xi_n(x)) = \sum_{x\in X_n} \frac{\langle \xi_n(x), \xi_n(x) \rangle}{\|\xi_n(x)\|} \cdot \delta_x = \sum_{x\in X_n} \|\xi_n(x)\| \cdot \delta_x = \widetilde{\xi}_n.
\end{eqnarray*}
As $P_n=\langle \ \cdot\  , \xi_n \rangle \xi_n$ and $\widetilde{P}_n=\langle \ \cdot\  , \widetilde{\xi}_n \rangle \widetilde{\xi}_n$, we deduce that $P_n = W_n \circ \widetilde{P}_n \circ W_n^*$ and $\widetilde{P}_n = W_n^* \circ P_n \circ W_n$ for every $n\in \N$. Thus, $P = W \circ \widetilde{P} \circ W^*$ and $\widetilde{P} = W^* \circ P \circ W$, where $W:=\bigoplus_{n\in \N} W_n$ is the linear isometry from $\ell^2(X)$ to $\ell^2(X;\H_0)$. 

As $W$ and $W^*$ have propagation zero, we obtain a bounded linear map $C^*(X)\to C^*_u(X)$ given by $T\mapsto W^*\circ T\circ W$ for $T\in C^*(X)$. In particular, if $P\in C^*(X)$ then $\widetilde{P}=W^*\circ P\circ W\in C_u^*(X)$. Conversely, we want to show that $P\in C^*(X)$ if $\widetilde{P}\in C_u^*(X)$. Let $T$ be any bounded operator on $\ell^2(X)$ with finite propagation. If $x\in X_k$ and $y\in X_l$, then we have
$$
(W\circ T\circ W^*)_{x,y}\xi=\frac{\langle \xi  , \xi_l(y) \rangle \cdot T_{x,y}}{||\xi_l(y)||\cdot||\xi_k(x)||}\cdot\xi_k(x) \text{ for $\xi \in \H_0$.}
$$
Thus, $W\circ T\circ W^*$ has finite propagation and is locally compact as $(W\circ T\circ W^*)_{x,y}$ is a rank-one operator on $\H_0$ for every $x,y\in X$. As a consequence, we obtain an injective $*$-homomorphism $C_u^*(X)\to C^*(X)$ given by $T \mapsto W\circ T\circ W^*$ so that  $P = W \circ \widetilde{P} \circ W^*\in C^*(X)$. Hence, we complete the proof.
\end{proof}

Combining Theorem~\ref{thm:main result uniform case} with Lemma~\ref{lem:equiv of QL} and Lemma~\ref{lem:equiv of Roe}, we obtain our main theorem in this subsection and finish the proof of Theorem~\ref{thm: main result}:
\begin{thm}\label{thm:main result Roe}
Let $(X,d)$ be a coarse disjoint union of a sequence of finite metric spaces $\{(X_n,d_n)\}_{n\in \N}$ with uniformly bounded geometry, and $\H_0$ be an infinite-dimensional separable Hilbert space. Let  $P\in \B(\ell^2(X;\H_0))$ be a block-rank-one projection with respect to $\{X_n\}_{n\in \N}$. If $\widetilde{P}\in \B(\ell^2(X))$ is the uniformization of $P$, then the following are equivalent:
\begin{enumerate}
  \item $P\in C^*(X)$;
  \item $P$ is quasi-local;
  \item $\widetilde{P} \in C^*_u(X)$;
  \item $\widetilde{P}$ is quasi-local. 
\end{enumerate}
\end{thm}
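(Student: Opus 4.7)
The plan is to combine the three previously established ingredients; no further geometric input is needed beyond assembling them correctly.

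First, I would observe that the uniformization $\widetilde{P}$ is itself a block-rank-one projection on $\ell^2(X)$ with respect to $\{(X_n,d_n)\}_{n\in\N}$, obtained from the unit vectors $\widetilde{\xi}_n$ of Definition~\ref{defn:uniformization}. Since $X$ has bounded geometry (being the coarse disjoint union of finite metric spaces with uniformly bounded geometry), Theorem~\ref{thm:main result uniform case} applies to $\widetilde{P}$ and gives the equivalence $(3) \Leftrightarrow (4)$, namely that $\widetilde{P} \in C^*_u(X)$ if and only if $\widetilde{P}$ is quasi-local.

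Next, I would invoke Lemma~\ref{lem:equiv of QL}, which directly asserts that $P$ is quasi-local if and only if its uniformization $\widetilde{P}$ is quasi-local, yielding $(2) \Leftrightarrow (4)$. Finally, Lemma~\ref{lem:equiv of Roe} asserts that $P \in C^*(X)$ if and only if $\widetilde{P} \in C^*_u(X)$, yielding $(1) \Leftrightarrow (3)$; this is where the infinite-dimensional separability of $\H_0$ is used, via the local compactness in the definition of $C^*(X)$.

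Chaining together: $(1) \Leftrightarrow (3) \Leftrightarrow (4) \Leftrightarrow (2)$ closes the loop, so all four conditions are mutually equivalent. The only step that requires any care is checking that the hypotheses of the three ingredient results are all met simultaneously: Theorem~\ref{thm:main result uniform case} and Lemma~\ref{lem:equiv of Roe} both require uniformly bounded geometry (granted by assumption), while Lemma~\ref{lem:equiv of Roe} additionally needs $\H_0$ to be infinite-dimensional and separable (also granted). Since the entire argument is a direct assembly, there is no genuine obstacle; the real content of the theorem lies in the three cited results, particularly in Theorem~\ref{thm:main result uniform case} whose proof routes through the structure theorem for measured asymptotic expanders (Corollary~\ref{cor:MAE to ME}) and the spectral gap result (Proposition~\ref{prop: spetral gap for measured expanders}).
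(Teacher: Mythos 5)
Your proposal is correct and is exactly the paper's argument: the paper proves this theorem in one line by combining Theorem~\ref{thm:main result uniform case} (giving $(3)\Leftrightarrow(4)$ for the block-rank-one projection $\widetilde{P}$), Lemma~\ref{lem:equiv of QL} (giving $(2)\Leftrightarrow(4)$), and Lemma~\ref{lem:equiv of Roe} (giving $(1)\Leftrightarrow(3)$). Your hypothesis-checking and the observation about where infinite-dimensionality of $\H_0$ enters are both accurate.
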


\begin{rem}\label{lift by rank-1 proj}
If $P\in C_u^*(X)$ is a block-rank-one projection with respect to a coarse disjoint union $X=\bigsqcup_{n\in \N}X_n$, then so is $P\otimes e\in C^*(X)$ for any fixed rank-one projection $e\in \K(\H_0)$. In this case, the uniformization of $P\otimes e$ is $P$ itself. Consequently,  Theorem~\ref{thm:main result uniform case} can be deduced from Theorem~\ref{thm:main result Roe}.
\end{rem}

Let us finish this subsection with some consequences for the coarse Baum-Connes conjecture. First of all, we improve Proposition~\ref{prop:general case of projections in Roe} by providing the following measured version of \cite[Proposition~6.4]{structure}:
\begin{prop}\label{prop: weak embedding and ghost projection.finite to 1 uniform case}
Let $Y$ be a metric space of bounded geometry. Assume that there exists a sequence of measured asymptotic expanders $\{(X_n,d_n,m_n)\}_{n\in \N}$ of uniformly bounded geometry which admits a measured weak embedding $\{f_n\colon (X_n,d_n,m_n) \to Y\}_{n\in \N}$ such that for every $y\in Y$ the preimage $f_n^{-1}(y)$ is empty for all but finitely many $n\in \N$. Then there exist a sparse subspace $Y' \subseteq Y$ and a block-rank-one ghost projection in $C^*_u(Y')$. 

If we additionally assume that the sequence $\{f_n\}_{n\in \N}$ is \emph{uniformly} finite-to-one and each $m_n$ is the counting measure, then the averaging projection of $Y'$ is a ghost in $C^*_u(Y')$.
\end{prop}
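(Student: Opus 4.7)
The plan is to transfer the measured asymptotic expander structure from $\{X_n\}$ onto $Y$ via the pushforwards of $m_n$ under $f_n$, and then invoke Theorem~\ref{thm:main result uniform case}. First, set $Y_n := f_n(X_n)$. Since each $y\in Y$ is hit by only finitely many $f_n$ and $Y$ has bounded geometry, any finite subset of $Y$ meets only finitely many $Y_n$; a standard diagonal extraction then produces a subsequence (relabeled by $n$) with $d_Y(Y_n, Y_m)\to\infty$ as $n+m\to\infty$ with $n\neq m$. Thus $Y':=\bigsqcup_n Y_n$ is a sparse subspace of $Y$ of bounded geometry. I would next define unit vectors $\xi_n\in \ell^2(Y_n)$ by $\xi_n(y):=\sqrt{m_n(f_n^{-1}(y))/m_n(X_n)}$ and set $P:=\bigoplus_n \langle\,\cdot\,, \xi_n\rangle\xi_n\in \B(\ell^2(Y'))$; the associated probability measure on $Y_n$ is the pushforward $\mu_n(y):=m_n(f_n^{-1}(y))/m_n(X_n)$.

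The key step is to show that $\{(Y_n, d_Y|_{Y_n}, \mu_n)\}_{n\in\N}$ is a sequence of measured asymptotic expanders. Given $A\subseteq Y_n$ with $\alpha\leq\mu_n(A)\leq\tfrac12$, let $\tilde A:=f_n^{-1}(A)\subseteq X_n$, so that $m_n(\tilde A)/m_n(X_n)=\mu_n(A)\in [\alpha,\tfrac12]$. The expander property of $\{X_n\}$ provides $c_\alpha,R_\alpha$ with $m_n(\partial_{R_\alpha}\tilde A)>c_\alpha\,m_n(\tilde A)$. Using the coarseness of $\{f_n\}$ with common control function $\rho_+$, any $x\in\partial_{R_\alpha}\tilde A$ satisfies $f_n(x)\notin A$ and $d_Y(f_n(x),A)\leq\rho_+(R_\alpha)$, so $f_n^{-1}(\partial_{\rho_+(R_\alpha)}^{Y_n} A)\supseteq \partial_{R_\alpha}\tilde A$, and dividing by $m_n(X_n)$ gives $\mu_n(\partial_{\rho_+(R_\alpha)}^{Y_n} A)>c_\alpha\mu_n(A)$. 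Theorem~\ref{thm:main result uniform case} then places $P$ in $C_u^*(Y')$. Ghostness of $P$ follows because bounded geometry of $Y$ makes the measured weak embedding condition equivalent to $\sup_{x\in X_n} m_n(f_n^{-1}(f_n(x)))/m_n(X_n)\to 0$, which is precisely $\sup_{y\in Y_n}\mu_n(y)\to 0$, so Lemma~\ref{lem: ghost} applies.

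For the additional statement, I would compare $\mu_n$ with the uniform probability measure $\nu_n$ on $Y_n$. Uniform $N$-to-one-ness of $\{f_n\}$ combined with $m_n$ being counting measure gives $|Y_n|\leq|X_n|\leq N|Y_n|$ and fibre sizes in $[1,N]$, which yield $\nu_n(B)/N\leq\mu_n(B)\leq N\nu_n(B)$ for all $B\subseteq Y_n$. A sharper estimate using $|X_n|\geq \sum_{y\in A}|f_n^{-1}(y)|+|Y_n\setminus A|$ together with $|Y_n\setminus A|\geq |A|$ whenever $\nu_n(A)\leq\tfrac12$ gives the crucial bound $\mu_n(A)\leq N/(N+1)$ in that range. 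Combining the $\mu_n$-expansion from the previous paragraph with Lemma~\ref{lem:annoying 1/2 bdd constant} (applied with $\beta=N/(N+1)$) then produces measured asymptotic expansion for $\{(Y_n, d_Y|_{Y_n}, \nu_n)\}$. Theorem~\ref{thm:main result uniform case} places the averaging projection of $Y'$ in $C_u^*(Y')$; ghostness follows from $|Y_n|\geq |X_n|/N\to\infty$, since the measured weak embedding condition forces $|X_n|\to\infty$ when $m_n$ is the counting measure.

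The main obstacle I anticipate is the expander transfer in the second paragraph: the fibres of $f_n$ may be highly non-uniform, so both the measure-pullback identity $\mu_n(A)=m_n(f_n^{-1}(A))/m_n(X_n)$ and the coarseness of $f_n$ are essential in simultaneously controlling the mass and the relative boundary of $A$. The secondary subtlety lies in the mass-comparison for the averaging projection, where the bound $\mu_n(A)\leq N/(N+1)$ is exactly what keeps us away from the degenerate case $\mu_n(A)=1$ and permits the use of Lemma~\ref{lem:annoying 1/2 bdd constant}.
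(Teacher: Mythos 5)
Your proposal is correct, and it reaches the conclusion by a route that differs from the paper's in its middle step. The paper first places the block-rank-one projection associated to $\{m_n\}$ in $C_u^*(X)$ on the \emph{source} space (via Proposition~\ref{prop: measured asymptotic expanders} and Theorem~\ref{thm:main result uniform case}), tensors with a rank-one projection to land in $C^*(X)$, pushes the operator forward to $C^*(Y')$ by the covering-isometry argument of Proposition~\ref{prop:general case of projections in Roe}, and finally descends to $C_u^*(Y')$ via uniformization (Theorem~\ref{thm:main result Roe}). You instead push the \emph{measures} forward, verify by a direct geometric argument (using the common control function $\rho_+$ and the containment $\partial_{R_\alpha}f_n^{-1}(A)\subseteq f_n^{-1}(\partial^{Y_n}_{\rho_+(R_\alpha)}A)$) that the pushforward sequence $\{(Y_n,d_Y|_{Y_n},\mu_n)\}_n$ is again a ghostly sequence of measured asymptotic expanders, and then apply the intrinsic characterisation on the \emph{target}. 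The two constructions produce the same projection --- formula (\ref{formula of measure on Yn}) shows the paper's $\widetilde{Q}_n$ has associated measure exactly your $\mu_n$ --- so nothing is lost; your route avoids Roe algebras with coefficients and the uniformization lemmas entirely, at the cost of redoing the expansion transfer that the operator-theoretic pushforward gives for free. For the second part, the paper invokes an external comparison lemma from \cite{dypartI} for measures with uniformly bounded ratios, whereas you carry out the comparison between $\mu_n$ and the normalised counting measure $\nu_n$ by hand; your bound $\mu_n(A)\leq N/(N+1)$ for $\nu_n(A)\leq\tfrac12$, combined with Lemma~\ref{lem:annoying 1/2 bdd constant}, is precisely what is needed and makes the argument self-contained. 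The only points worth tightening in a write-up are (i) spelling out the diagonal extraction that makes $Y'=\bigsqcup_n Y_n$ sparse (the paper buries this inside Proposition~\ref{prop:general case of projections in Roe}), and (ii) replacing $\rho_+(R_\alpha)$ by $\max\{\rho_+(R_\alpha),1\}$ so the boundary radius is strictly positive; neither is a genuine gap.
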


\begin{proof}
Without loss of generality, we can assume that each $m_n$ is a probability measure. Let $X$ be a coarse disjoint union of $\{(X_n,d_n)\}_{n\in \N}$, and $f:=\bigsqcup_{n\in \N} f_n: X\to Y$. As the sequence $\{f_n\}_{n\in \N}$ is coarse, we can assume that $f$ is coarse as well by appropriately choosing the metric on $X$. From the assumption on the preimages of $\{f_n\}_{n\in \N}$, we know that $f$ is finite-to-one.
 
By Proposition~\ref{prop: measured asymptotic expanders} and Theorem~\ref{thm:main result uniform case}, the associated block-rank-one projection $P:=\bigoplus_{n\in \N} P_n$ with respect to $\{(X_n,d_n,m_n)\}_{n\in \N}$ belongs to $C_u^*(X)$, where each $P_n \in \B(\ell^2(X_n))$ is the rank-one projection onto the subspace spanned by the unit vector $X_n\ni x\mapsto \sqrt{m_n(x)}$ in $\ell^2(X_n)$. We fix an infinite-dimensional separable Hilbert space $\H_0$ and a rank-one projection $e \in \K(\H_0)$, then $P \otimes e \in C^*(X)$ and its uniformization is $P$ itself by Remark~\ref{lift by rank-1 proj}. Moreover, Remark~\ref{measure to uniformization} tells us that each $m_n$ is also the associated measure to $P_n \otimes e$. Hence, it follows from Proposition~\ref{prop:general case of projections in Roe} that there exist a sparse subspace $Y'=\bigsqcup_{n\in \N} Y_n$ in $Y$ where $Y_n=f_n(X_n)$, and a block-rank-one ghost projection $Q \in C^*(Y')$. By Theorem~\ref{thm:main result Roe} and Remark~\ref{measure to uniformization}, we conclude that its uniformization $\widetilde{Q}$ is a block-rank-one ghost projection in $C^*_u(Y')$ as desired.

Now we additionally assume that the sequence $\{f_n\}_{n\in \N}$ is uniformly finite-to-one and each $m_n$ is the probability counting measure on $X_n$. As $\widetilde{Q}$ is a block-rank-one ghost projection in $C^*_u(Y')$, it follows from Proposition~\ref{prop: measured asymptotic expanders} and Lemma~\ref{lem: ghost} that $\{(Y_n,m_n')\}_{n\in \N}$ forms a ghostly sequence of measured asymptotic expanders, where $m_n'$ is the probability measure on $Y_n$ associated to $\widetilde{Q}_n$. From the formula (\ref{formula of measure on Yn}) in the proof of Proposition~\ref{prop:general case of projections in Roe} we have that
$$
m_n'(y)=||\widetilde{Q}_n\delta_y||^2=|(\widetilde{Q}_n)_{y,y}|=m_n(f_n^{-1}(y))=\frac{|f_n^{-1}(y)|}{|X_n|}, \text{\ for $y\in Y_n$}.
$$

Since $\{f_n\}_{n\in \N}$ is uniformly finite-to-one and each $f_n$ is surjective onto $Y_n$, there exists a constant $s\geq 1$ such that for any $n\in \N$ and any $y_1,y_2\in Y_n$, we have 
\[
\frac{1}{s} m'_n(y_2) \leq m'_n(y_1) \leq s m'_n(y_2).
\]
As a consequence of \cite[Definition~6.4 and Lemma~6.5]{dypartI}, we conclude that $\{Y_n\}_{n\in \N}$ is a sequence of asymptotic expanders with $|Y_n|\to \infty$ as $n\to \infty$. So the averaging projection of $\{Y_n\}_{n\in \N}$ is a ghost in $C^*_u(Y')$ by Theorem \ref{thm:main result uniform case} or \cite[Theorem~C]{structure}.
\end{proof}

Note that in the previous proposition the condition ``for every $y\in Y$ the preimage $f_n^{-1}(y)$ is empty for all but finitely many $n\in \N$'' is redundant if $Y$ is sparse. This follows from Corollary~\ref{cor:measured weak embedding} and the following general fact:

\begin{lem}\label{condition the preimage}
Let $\{(V_n,E_n,m_n)\}_{n\in \N}$ be a sequence of finite measured graphs and $Y=\bigsqcup_{k\in \N}Y_k$ be a sparse space. If $\{f_n: (V_n,E_n,m_n) \to Y\}_{n\in \N}$ is a measured weak embedding, then for every $y\in Y$ the preimages $f_n^{-1}(y)\neq \emptyset$ for only finitely many $n$'s.
\end{lem}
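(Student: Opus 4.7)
The plan is to argue by contradiction. Fix $y\in Y$ and suppose the preimages $f_n^{-1}(y)$ are non-empty for infinitely many $n$ in some infinite set $S\subseteq\N$. Let $k_0\in\N$ be such that $y\in Y_{k_0}$, and let $\rho_+$ denote a common coarse upper control function for the sequence $\{f_n\}_{n\in\N}$.

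First I would observe that the image $f_n(V_n)$ is \emph{coarsely path-connected} with constant $\rho_+(1)$: since each graph $V_n$ is connected (by the standing convention) and adjacent vertices are at edge-path distance one, the images of any two adjacent vertices of $V_n$ lie within distance $\rho_+(1)$ in $Y$.

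Next I would localise $f_n(V_n)$ inside a fixed bounded subset of $Y$, exploiting sparseness. Introduce the auxiliary graph $G$ on the vertex set $\N$ by declaring $k\sim l$ iff $k\neq l$ and $d(Y_k,Y_l)\leq \rho_+(1)$. Sparseness of $Y$ ensures that only finitely many unordered pairs $\{k,l\}$ with $k\neq l$ satisfy $d(Y_k,Y_l)\leq \rho_+(1)$, hence $G$ has finitely many edges and every connected component of $G$ is finite. Given two vertices $v_1,v_2\in V_n$ with $f_n(v_1)\in Y_{l_0}$ and $f_n(v_2)\in Y_{l_m}$, a path $v_1=u_0,u_1,\ldots,u_m=v_2$ in $V_n$ produces a sequence of indices $l_0,l_1,\ldots,l_m$ with $f_n(u_i)\in Y_{l_i}$ such that consecutive indices are either equal or adjacent in $G$; this shows that every index $k$ with $f_n(V_n)\cap Y_k\neq\emptyset$ belongs to the (finite) connected component $C$ of $G$ containing $k_0$. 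Hence $f_n(V_n)\subseteq Z:=\bigcup_{k\in C}Y_k$ for every $n\in S$, and $Z$ is a finite, hence bounded, subset of $Y$.

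Finally, I would choose $R>0$ so large that $Z\subseteq B(y,R)$. For each $n\in S$, picking $x_n\in f_n^{-1}(y)$ gives $f_n(V_n)\subseteq Z\subseteq B(y,R)=B(f_n(x_n),R)$, so $f_n^{-1}(B(f_n(x_n),R))=V_n$ and therefore
\[
\frac{m_n\bigl(f_n^{-1}(B(f_n(x_n),R))\bigr)}{m_n(V_n)}=1
\]
for all $n\in S$, contradicting \eqref{EQ:measured weak embedding}. The argument has no serious obstacle; the crucial point is that sparseness forces the auxiliary graph $G$ to have only finitely many edges, which pins $f_n(V_n)$ into a fixed bounded region as soon as $y\in f_n(V_n)$.
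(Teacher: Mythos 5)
Your proof is correct and follows essentially the same route as the paper: both use connectedness of the graphs together with the coarse control on $f_n$ to confine $f_n(V_n)$ to a fixed finite union of blocks $Y_k$ (the paper states this more tersely; your auxiliary graph on the index set makes the sparseness argument explicit), and then contradict the measured weak embedding condition. The only cosmetic difference is the endgame: the paper pigeonholes over the points of the finite image set to find one point whose preimage has measure at least $1/|E|$ times the total, whereas you take $R$ large enough that the whole image lies in $B(f_n(x_n),R)$, forcing the ratio in \eqref{EQ:measured weak embedding} to equal $1$ --- both are valid.
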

\begin{proof}
If this were not the case, then there would exist $y_0\in Y$ such that $f_{n_i}^{-1}(y_0)\neq \emptyset$ for all $i\in \N$. As all $(V_n,E_n)$ are connected graphs and all $f_n$ are $L$-Lipschitz for some $L>0$ by Remark~\ref{graph mwe}, it follows that $f_{n_i}(V_{n_i})\subseteq E:=\bigsqcup_{k=1}^M Y_k$ for some natural number $M$ (independent of $i$). Since $E$ is a non-empty finite set and $f_{n_i}^{-1}(E)=V_{n_i}$ for all $i\in \N$, it follows that $m_{n_i}(V_{n_i})=\sum_{y\in E}m_{n_i}(f_{n_i}^{-1}(y))$ for every $i\in \N$. In particular, for each $i\in \N$ there exists $y_i\in E$ such that $\frac{m_{n_i}(f_{n_i}^{-1}(y_i))}{m_{n_i}(V_{n_i})}\geq \frac{1}{|E|}>0$, which contradicts with the assumption that $\{f_n\}_{n\in \N}$ is a measured weak embedding.
\end{proof}

The following theorem is a consequence of Corollary~\ref{cor:measured weak embedding}, Proposition~\ref{prop: weak embedding and ghost projection.finite to 1 uniform case}  and Lemma~\ref{condition the preimage} with an identical proof of \cite[Theorem~6.7]{structure}.
\begin{thm}\label{thm: existence of ghost proj in Roe}
Let $\{X_n\}_{n\in \N}$ be a sequence of finite metric spaces of uniformly bounded geometry and $X$ be their coarse disjoint union. If $X$ admits a fibred coarse embedding into a Hilbert space\footnote{See \cite[Definition~2.1]{CWY13} for the definition of fibred coarse embedding into Hilbert spaces.}, and there exists a sequence of measured asymptotic expanders of uniformly bounded geometry which admits a measured weak embedding into $X$, then the following statements hold:
\begin{itemize}
\item[(1)] The coarse Baum--Connes assembly map for $X$ is injective but non-surjective.
\item[(2)] The induced map $\iota_*\colon K_*(\K)\rightarrow K_*(I_G)$ is injective but non-surjective, where $\iota\colon\K\hookrightarrow I_G$ is the inclusion of the compact ideal $\K$ into the ghost ideal $I_G$ of the Roe algebra $C^*(X)$.
\item[(3)] The induced map $\pi_*\colon K_*(C^*_{max}(X))\rightarrow K_*(C^*(X))$ is injective but non-surjective, where $\pi\colon C^*_{max}(X)\twoheadrightarrow C^*(X)$ is the canonical surjection from the maximal Roe algebra onto the Roe algebra. In particular, $\pi$ is not injective.
\end{itemize}
\end{thm}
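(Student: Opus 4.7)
The proof mirrors that of \cite[Theorem~6.7]{structure} almost verbatim, so I concentrate on what is new, namely the extraction of a non-compact ghost projection in $C^*(X)$, and then indicate the standard Finn--Sell-type wrap-up.

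\textbf{Producing a non-compact ghost projection.} First I would convert the hypothesis into a non-compact ghost projection inside $C^*(X)$. By assumption there is a sequence $\{(X_n,d_n,m_n)\}_{n\in\N}$ of measured asymptotic expanders with uniformly bounded geometry admitting a measured weak embedding $\{f_n\}$ into $X$. Via Corollary~\ref{cor:measured weak embedding} I may replace $\{(X_n,d_n,m_n)\}$ by a sequence of measured expanders $\{(V_n,E_n,\mu_n)\}$ with uniformly bounded valency still measured-weakly embedding into $X$. Since $X$ is sparse and each $V_n$ is a finite connected graph, Lemma~\ref{condition the preimage} guarantees that for every $y \in X$ the preimage $f_n^{-1}(y)$ is empty for all but finitely many $n$. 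All hypotheses of Proposition~\ref{prop: weak embedding and ghost projection.finite to 1 uniform case} are now satisfied, and applying it yields a sparse subspace $X' \subseteq X$ together with a block-rank-one ghost projection $P \in C^*_u(X') \subseteq C^*_u(X) \subseteq C^*(X)$. Because $X'$ is an infinite sparse space and each block summand of $P$ is a non-zero rank-one projection, $P$ has infinite rank and is therefore not compact.

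\textbf{The Finn--Sell-type wrap-up.} With a non-compact ghost projection in hand, I would exploit the fibred coarse embedding. By the theorem of Chen--Wang--Yu \cite{CWY13}, this yields the injectivity of the coarse Baum--Connes assembly map $\mu$ for $X$ (giving the injectivity half of (1)) and, together with the standard Finn--Sell refinement, the isomorphism of the maximal coarse Baum--Connes assembly map $\mu_{\max}\colon KX_*(X) \to K_*(C^*_{\max}(X))$, as well as the isomorphism $K_*(\K) \to K_*(I_G^{\max})$ at the maximal level. From here, statements (1)--(3) follow exactly as in \cite[Theorem~6.7]{structure}: the class $[P]\in K_0(I_G)$ does not lie in $\iota_*(K_0(\K))$ (since $P$ is non-compact), giving non-surjectivity in (2); transporting this obstruction through the commutative diagram relating the maximal and reduced Roe algebras yields injectivity and non-surjectivity in (3); and because $\mathrm{Im}(\mu) = \pi_*(\mathrm{Im}(\mu_{\max})) = \mathrm{Im}(\pi_*)$ (using $\mu = \pi_* \circ \mu_{\max}$ with $\mu_{\max}$ an isomorphism), the non-surjectivity of $\pi_*$ upgrades to non-surjectivity of $\mu$, completing (1).

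\textbf{Main obstacle.} All of the substantive technical work has been carried out in the earlier sections: the structure theorem (Corollary~\ref{cor:MAE to ME}), the spectral gap results for measured expanders (Proposition~\ref{prop: spetral gap for measured expanders}), and the fundamental Theorem~\ref{thm: main result} all feed into Proposition~\ref{prop: weak embedding and ghost projection.finite to 1 uniform case}, which is the real engine here. Consequently, the only subtle point in the above plan is the smooth combination of Corollary~\ref{cor:measured weak embedding} and Lemma~\ref{condition the preimage} to verify the hypotheses of that proposition; the rest is a direct transcription of the Finn--Sell machinery as implemented in \cite[Theorem~6.7]{structure}.
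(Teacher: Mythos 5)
Your proposal is correct and follows essentially the same route as the paper, which likewise deduces the theorem by combining Corollary~\ref{cor:measured weak embedding}, Lemma~\ref{condition the preimage} and Proposition~\ref{prop: weak embedding and ghost projection.finite to 1 uniform case} to produce the non-compact block-rank-one ghost projection and then invoking the proof of \cite[Theorem~6.7]{structure} verbatim for the Finn--Sell-type $K$-theoretic conclusions. The one cosmetic point is that the inclusion $C^*_u(X')\subseteq C^*(X)$ should be realised by tensoring with a fixed rank-one projection as in Remark~\ref{lift by rank-1 proj}, which preserves both the ghost and the non-compactness properties.
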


In view of Lemma~\ref{lem:measured.weak.embedding}, we deduce the following corollary of Theorem~\ref{thm: existence of ghost proj in Roe}:
\begin{cor}\label{cbc+gmae}
Let $\{X_n\}_{n\in \N}$ be a sequence of finite metric spaces of uniformly bounded geometry and $X$ be their coarse disjoint union. If $X$ admits a fibred coarse embedding into a Hilbert space, and there exists a sequence of ghostly measured asymptotic expanders of uniformly bounded geometry which admits a coarse embedding into $X$, then $X$ violates the coarse Baum-Connes conjecture.
\end{cor}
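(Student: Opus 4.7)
The plan is to deduce this as a short consequence of Theorem~\ref{thm: existence of ghost proj in Roe} together with the first bullet of Lemma~\ref{lem:measured.weak.embedding}. The hypothesis gives us a sequence $\{(Y_n,d_{Y_n},m_n)\}_{n\in\N}$ of ghostly measured asymptotic expanders with uniformly bounded geometry and a coarse embedding $\{f_n\colon Y_n\to X\}_{n\in\N}$ (in the sense of the definition in Section~\ref{Metric embeddings}, i.e., with common control functions $\rho_\pm$). To feed this into Theorem~\ref{thm: existence of ghost proj in Roe}, I need to upgrade the coarse embedding to a \emph{measured} weak embedding.

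First I would observe that, since the sequence is ghostly and has uniformly bounded geometry, Lemma~\ref{lem:measured.weak.embedding}(1) applies directly: every coarse embedding $\{f_n\}_{n\in\N}$ of $\{(Y_n,d_{Y_n},m_n)\}_{n\in\N}$ into $X$ is automatically a measured weak embedding. Thus $X$ satisfies both hypotheses of Theorem~\ref{thm: existence of ghost proj in Roe}, namely fibred coarse embeddability into a Hilbert space and the existence of a sequence of measured asymptotic expanders of uniformly bounded geometry admitting a measured weak embedding into $X$.

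Applying part (1) of Theorem~\ref{thm: existence of ghost proj in Roe}, the coarse Baum-Connes assembly map for $X$ is injective but not surjective; in particular, it fails to be an isomorphism, so $X$ violates the coarse Baum-Connes conjecture. There is no real obstacle here beyond invoking the right packaging; the work has all been done in Theorem~\ref{thm: existence of ghost proj in Roe} and in Lemma~\ref{lem:measured.weak.embedding}. The only subtlety worth noting is that the ``ghostly'' assumption is exactly what converts a coarse embedding into a measured weak embedding via the first bullet of Lemma~\ref{lem:measured.weak.embedding}, whereas without it the hypothesis of Theorem~\ref{thm: existence of ghost proj in Roe} would not be verified.
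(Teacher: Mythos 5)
Your argument is correct and is exactly the paper's route: the paper states the corollary as a direct consequence of Theorem~\ref{thm: existence of ghost proj in Roe} ``in view of Lemma~\ref{lem:measured.weak.embedding}'', i.e., the first bullet of that lemma upgrades the coarse embedding of the ghostly, uniformly bounded geometry sequence to a measured weak embedding, and part (1) of the theorem then gives the failure of the coarse Baum--Connes conjecture. Nothing further is needed.
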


\subsection{A geometric criterion for the rigidity problem}\label{The geometric condition and its characterisations}
Now we provide a geometric condition for which the rigidity problem holds. Let us start with the following characterisations for the analytic conditions introduced in Definition \ref{our geometric condition}: 
\begin{cor}\label{cor:geometric condition characterisation}
Let $X$ be a metric space with bounded geometry. Then the following properties are equivalent: 
\begin{enumerate}
\item all sparse subspaces of $X$ contain no block-rank-one ghost projections in their Roe algebras;
\item all sparse subspaces of $X$ contain no block-rank-one ghost projections in their uniform Roe algebras;
\item $X$ contains no sparse subspaces consisting of ghostly measured asymptotic expanders.
\item $X$ coarsely contains no sparse spaces consisting of ghostly measured asymptotic expanders with uniformly bounded geometry. 
\end{enumerate}
In particular, all of these properties are coarsely invariant among metric spaces with bounded geometry.
\end{cor}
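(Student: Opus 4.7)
The plan is to prove the equivalences via the scheme $(1) \Leftrightarrow (3) \Leftrightarrow (2)$ and $(3) \Leftrightarrow (4)$, leveraging Theorem~\ref{thm: main result} for the first three conditions and Proposition~\ref{prop: weak embedding and ghost projection.finite to 1 uniform case} for the last one.

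I would first treat $(1) \Leftrightarrow (3)$ and $(2) \Leftrightarrow (3)$ simultaneously. Fix any sparse subspace $X' = \bigsqcup X_n$ of $X$. Given a block-rank-one projection in $C^*_u(X')$ (respectively $C^*(X')$) with associated probability measures $\{m_n\}$, Theorem~\ref{thm:main result uniform case} (respectively Theorem~\ref{thm:main result Roe}) combined with Proposition~\ref{prop: measured asymptotic expanders} identifies its membership in the (uniform) Roe algebra with $\{(X_n, d_n, m_n)\}$ being a sequence of measured asymptotic expanders, while Lemma~\ref{lem: ghost} identifies the ghost condition with ghostliness of the sequence. Conversely, starting from any ghostly measured asymptotic expander structure on $X'$, the block-rank-one projection built from the unit vectors $\xi_n = \sqrt{m_n} \in \ell^2(X_n)$ lies in $C^*_u(X')$ by Theorem~\ref{thm:main result uniform case} and is a ghost by Lemma~\ref{lem: ghost}; its tensor with any fixed rank-one projection $e \in \K(\H_0)$ similarly lies in $C^*(X')$ (see Remark~\ref{lift by rank-1 proj}). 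Hence both $(1)$ and $(2)$ are equivalent to $(3)$, and in particular $(1) \Leftrightarrow (2)$.

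For $(3) \Leftrightarrow (4)$: the implication $(4) \Rightarrow (3)$ is immediate, since any sparse subspace of $X$ consisting of ghostly measured asymptotic expanders inherits bounded (hence uniformly bounded) geometry from $X$ and is coarsely contained in $X$ via the inclusion map. For $(3) \Rightarrow (4)$, I argue contrapositively: suppose $f \colon Z \to X$ is a coarse embedding where $Z = \bigsqcup Z_n$ is sparse and $\{(Z_n, d_n, m_n)\}$ is a ghostly sequence of measured asymptotic expanders of uniformly bounded geometry. By Lemma~\ref{lem:measured.weak.embedding}, the maps $\{f_n := f|_{Z_n}\}$ form a measured weak embedding into $X$. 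Moreover $f$ is uniformly finite-to-one (since $Z$ has bounded geometry), and because $Z$ is sparse, each finite preimage $f^{-1}(y)$ meets only finitely many $Z_n$; consequently $f_n^{-1}(y) = \emptyset$ for all but finitely many $n$, which is the hypothesis required by Proposition~\ref{prop: weak embedding and ghost projection.finite to 1 uniform case}. That proposition then produces a sparse subspace $Y' \subseteq X$ carrying a block-rank-one ghost projection in $C^*_u(Y')$, and applying $(2) \Leftrightarrow (3)$ to $Y'$ exhibits $Y'$ as a sparse subspace of $X$ consisting of ghostly measured asymptotic expanders, negating $(3)$.

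Finally, condition $(4)$ is manifestly invariant under coarse equivalence within bounded geometry spaces, since coarse containment depends only on the coarse equivalence class of the ambient space (compose any coarse embedding $Z \to X_1$ with a coarse equivalence $X_1 \to X_2$). The equivalences proved above transfer this invariance to $(1)$, $(2)$, $(3)$. The main technical subtlety is the direction $(3) \Rightarrow (4)$: there is no purely geometric route that turns an external coarse embedding into an internal sparse subspace of the required expander type, and the argument must detour through the analytic machinery of Proposition~\ref{prop: weak embedding and ghost projection.finite to 1 uniform case} to transport the ghost projection onto a sparse subspace of $X$, from which the measured expander structure reappears via the already-established $(2) \Leftrightarrow (3)$.
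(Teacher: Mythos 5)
Your proof is correct and follows essentially the same route as the paper: the equivalences among (1)--(3) via Theorem~\ref{thm: main result}, Proposition~\ref{prop: measured asymptotic expanders}, Lemma~\ref{lem: ghost} and the uniformization/tensoring remarks, and $(3)\Leftrightarrow(4)$ via Lemma~\ref{lem:measured.weak.embedding} and Proposition~\ref{prop: weak embedding and ghost projection.finite to 1 uniform case}. The only (harmless) divergence is the final coarse-invariance claim, which you deduce from the manifest invariance of condition (4) whereas the paper invokes Proposition~\ref{prop:coarse invariant}; both arguments are valid.
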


\begin{proof}
``$(1) \Rightarrow (2)$'' follows from Remark~\ref{measure to uniformization} and Remark~\ref{lift by rank-1 proj}. While ``$(2) \Rightarrow (1)$'' follows from Remark~\ref{measure to uniformization} and ``$(1) \Rightarrow (3)$'' in Theorem~\ref{thm:main result Roe}. The equivalence between $(2)$ and $(3)$ follows directly from Lemma~\ref{lem: ghost}, Proposition~\ref{prop: measured asymptotic expanders} and Theorem~\ref{thm:main result uniform case}.

Since every isometric embedding is a coarse embedding, we clearly have ``$(4) \Rightarrow (3)$''. To see ``$(3) \Rightarrow (4)$'', we assume that $X$ coarsely contains a sparse space $X'$ consisting of ghostly measured asymptotic expanders with uniformly bounded geometry. If $f:X'\to X$ is the coarse embedding, then $f$ must be finite-to-one because $X'$ is sparse. As a consequence of Lemma~\ref{lem:measured.weak.embedding} and Proposition~\ref{prop: weak embedding and ghost projection.finite to 1 uniform case}, there exist a sparse subspace $Z$ of $X$ and a block-rank-one ghost projection in $C_u^*(Z)$. Hence, $Z$ consists of ghostly measured asymptotic expanders by Lemma~\ref{lem: ghost} and Proposition~\ref{prop: measured asymptotic expanders}. 

Finally, the last statement can be deduced from Proposition~\ref{prop:coarse invariant} together with the fact that every coarse embedding from a metric space with bounded geometry is uniformly finite-to-one.
\end{proof}

Condition (3) in Corollary~\ref{cor:geometric condition characterisation} is the one we are most interested in, because it is formally the weakest geometric criterion which guarantees  rigidity. More precisely, we obtain the following rigidity result (Theorem~\ref{introthm:rigidity.geometric.condition}) by combining Proposition~\ref{prop:rigidity.Roe} with Corollary~\ref{cor:geometric condition characterisation}. 
\begin{thm}\label{thm:rigidity.geometric.condition}
Let $X$ and $Y$ be metric spaces with bounded geometry. Assume that either $X$ or $Y$ contains no sparse subspaces consisting of ghostly measured asymptotic expanders. Then the following are equivalent:
\begin{enumerate}
  \item $X$ is coarsely equivalent to $Y$;
  \item $C^*_u(X)$ is Morita equivalent \footnote{By \cite[Theorem~1.2]{BGR}, $C^*_u(X)$ and $C^*_u(Y)$ are Morita equivalent if and only if they are stably $\ast$-isomorphic.} to $C^*_u(Y)$;
  \item $C^*_s(X)$ is $\ast$-isomorphic to $C^*_s(Y)$;
  \item $UC^*(X)$ is $\ast$-isomorphic to $UC^*(Y)$;
  \item $C^*(X)$ is $\ast$-isomorphic to $C^*(Y)$.
\end{enumerate}
\end{thm}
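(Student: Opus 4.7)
The plan is to deduce this theorem from two results already established in the excerpt: Proposition~\ref{prop:rigidity.Roe}, which gives the equivalence of (1)--(5) under the \emph{analytic} hypothesis that all sparse subspaces of $Y$ contain no block-rank-one ghost projections in their Roe algebras; and Corollary~\ref{cor:geometric condition characterisation}, which translates this analytic hypothesis into the \emph{geometric} one appearing in the theorem, namely the absence of sparse subspaces consisting of ghostly measured asymptotic expanders.

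First I would observe that conditions (1)--(5) are each symmetric in $X$ and $Y$: coarse equivalence is symmetric, and the various (stable) $\ast$-isomorphism statements are likewise symmetric. By relabelling if necessary, we may therefore assume without loss of generality that the geometric hypothesis holds for $Y$. Next I would apply the implication $(3)\Rightarrow (1)$ of Corollary~\ref{cor:geometric condition characterisation} to $Y$ in order to extract precisely the analytic condition required by Proposition~\ref{prop:rigidity.Roe}. The equivalence of (1)--(5) then follows directly from that proposition.

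Since both cited results have already been proved in the paper, no genuine obstacle remains at this stage; the substantive work has been absorbed into Theorem~\ref{thm: main result} (which underlies Corollary~\ref{cor:geometric condition characterisation} via Lemma~\ref{lem: ghost} and Proposition~\ref{prop: measured asymptotic expanders}) and into the block-rank-one adaptation of the rank-rigidity machinery of \cite{rigidity_CBC} carried out in Proposition~\ref{prop:rigidity.Roe}. The only point requiring a moment of thought is the initial symmetry observation, which is what allows the one-sided hypothesis required by Proposition~\ref{prop:rigidity.Roe} to accommodate the hypothesis ``on either $X$ or $Y$'' in the present theorem's statement.
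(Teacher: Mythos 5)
Your proposal is correct and follows exactly the route the paper takes: the paper obtains Theorem~\ref{thm:rigidity.geometric.condition} precisely by combining Proposition~\ref{prop:rigidity.Roe} with the implication $(3)\Rightarrow(1)$ of Corollary~\ref{cor:geometric condition characterisation}, with the symmetry of conditions (1)--(5) in $X$ and $Y$ handling the ``either $X$ or $Y$'' formulation. Nothing is missing.
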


\begin{rem}
Very recently, it has been shown in \cite{BBFKVW} that (1) to (4) in Theorem~\ref{thm:rigidity.geometric.condition} are all equivalent for general metric spaces with bounded geometry. At this moment, we believe that Theorem~\ref{thm:rigidity.geometric.condition} is still the best-known result on the equivalence of (5) with the other items. 
\end{rem}

\section{Rigid metric spaces}\label{sec:examples}
In this final section, we will apply Theorem~\ref{thm:rigidity.geometric.condition} to obtain new examples of rigid spaces which are not obviously covered by previously existing results.

Let us start with the following result, which asserts that measured asymptotic expanders are obstructions to measured weak embeddings into $L^p$-spaces.

\begin{prop}\label{prop: non CE}
Any sequence of measured asymptotic expanders with uniformly bounded geometry cannot be measured weakly embedded into any $L^p$-space for $p \in [1,\infty)$. 
\end{prop}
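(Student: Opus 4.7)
The plan is to assume for contradiction that a measured weak embedding $\{f_n:(X_n,d_n,m_n) \to L^p(\Omega,\mu)\}_{n\in\N}$ with coarse control function $\rho_+$ exists, apply the structure theorem to exhaust each $X_n$ by measured expanders with bounded measure ratios, and then pit the coarse Lipschitz bound against the measured weak embedding condition via the $L^p$-Poincar\'{e} inequality.

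First I would apply Corollary~\ref{cor:MAE to ME} to $\{(X_n,d_n,m_n)\}_{n\in\N}$ and obtain, for each $k \in \N$, finite graphs $(V_{n,k},E_{n,k})$ of uniformly bounded valency $K$, $R_k$-Lipschitz injective maps $i_{n,k}: V_{n,k} \to X_n$, and pullback measures $m_{n,k} = i_{n,k}^* m_n$ of full support satisfying $m_{n,k}(V_{n,k}) \geq (1-\alpha_k) m_n(X_n)$, such that $\{(V_{n,k},E_{n,k},m_{n,k})\}_{n\in \N}$ is a sequence of $c$-measured expanders with measure ratio bounded by $s_k$ on adjacent vertices. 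Then I would apply Corollary~\ref{cor:Poincare for measured expanders} to the $L^p$-valued maps $g_{n,k} := f_n \circ i_{n,k}$. Since that corollary is stated only for $\C$-valued functions, I would first promote it to $L^p$-valued functions by applying the scalar inequality pointwise in $\omega \in \Omega$ and integrating in $d\mu(\omega)$; Fubini is trivial because $V_{n,k}$ is finite. This produces a constant $c'=c'(c,s_k,p,K)>0$ with
\[
\sum_{u\thicksim_{E_{n,k}} v}\!\! \|g_{n,k}(u)-g_{n,k}(v)\|_p^p \bigl(m_{n,k}(u)+m_{n,k}(v)\bigr) \;\geq\; c'\!\! \sum_{u,v\in V_{n,k}}\!\! \|g_{n,k}(u)-g_{n,k}(v)\|_p^p \frac{m_{n,k}(u)m_{n,k}(v)}{m_{n,k}(V_{n,k})}.
\]

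For fixed $k$, I would then bound each side. Adjacency in $E_{n,k}$ forces $d_n(i_{n,k}(u),i_{n,k}(v)) \leq R_k$, so $\|g_{n,k}(u)-g_{n,k}(v)\|_p \leq \rho_+(R_k)$, yielding an upper bound of the form $2K\rho_+(R_k)^p \cdot m_{n,k}(V_{n,k})$ on the left-hand side. On the right, the measured weak embedding condition gives $\varepsilon_n(R) := \sup_{x\in X_n} m_n(f_n^{-1}(B(f_n(x),R)))/m_n(X_n) \to 0$ for each fixed $R>0$, hence the total $m_n \otimes m_n$-mass of pairs $(x,y)\in X_n \times X_n$ with $\|f_n(x)-f_n(y)\|_p \leq R$ is at most $\varepsilon_n(R)\, m_n(X_n)^2$; using that $i_{n,k}$ is injective and $m_{n,k}(V_{n,k})\geq (1-\alpha_k)m_n(X_n)$, the right-hand side after dividing by $m_{n,k}(V_{n,k})$ is at least $c' R^p \bigl((1-\alpha_k)^2 - \varepsilon_n(R)\bigr) m_n(X_n)$. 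Combining the two bounds and cancelling $m_n(X_n)$ gives
\[
2K\rho_+(R_k)^p \;\geq\; c'\, R^p \bigl((1-\alpha_k)^2 - \varepsilon_n(R)\bigr).
\]
For fixed $k$ and $R$, letting $n\to\infty$ turns this into $2K\rho_+(R_k)^p \geq c'R^p(1-\alpha_k)^2$, which is absurd as $R\to\infty$ since the left side is a constant. The main technical points to double-check are the $L^p$-valued promotion of Corollary~\ref{cor:Poincare for measured expanders} via Fubini and the careful bookkeeping transferring the weak-embedding estimate from $X_n$ to the subspace $i_{n,k}(V_{n,k})$; both are routine once Corollary~\ref{cor:MAE to ME} is in hand.
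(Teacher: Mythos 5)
Your proposal is correct and takes essentially the same route as the paper: reduce to measured expanders with bounded measure ratios via Corollary~\ref{cor:MAE to ME}, promote the $L^p$-Poincar\'{e} inequality of Corollary~\ref{cor:Poincare for measured expanders} to $L^p$-valued maps by integrating the scalar inequality over the underlying measure space, and then play the Lipschitz bound on edges against the measured-weak-embedding condition. The only cosmetic difference is the endgame: the paper applies the pigeonhole principle to produce a single point whose $R$-ball preimage carries more than half the total mass, whereas you sum the pair-mass bound directly and let $n\to\infty$ and then $R\to\infty$; both steps are routine once the Poincar\'{e} inequality is in place.
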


\begin{proof}
We assume that there is a sequence of measured asymptotic expanders $\{(X_n,d_n,m_n)\}_{n\in \N}$ with uniformly bounded geometry which could be measured weakly embedded into $L^p(Z;\nu)$ for some measure space $(Z,\nu)$ and $p \in [1,\infty)$. By Corollary~\ref{cor:MAE to ME}, we can assume that $\{(X_n,d_n,m_n)\}_{n\in \N}$ is a sequence of measured expander graphs with valency uniformly bounded by $K\geq 1$ such that all $m_n$ have full support with uniformly bounded measure ratio $s\in (0,1)$ on adjacent vertices.

By Corollary~\ref{cor:Poincare for measured expanders}, there exists a positive constant $c$ such that for any $n\in \N$ and any map $g:X_n \to \C$, we have the following $L^p$-Poincar\'{e} inequality:
\begin{equation}\label{EQ8}
\sum_{u,v\in X_n:u\thicksim_{} v}|g(u)-g(v)|^p (m_n(u) + m_n(v)) \geq c \sum_{u,v\in X_n} |g(u)-g(v)|^p \frac{m_n(u)m_n(v)}{m_n(X_n)}.
\end{equation}
If $\{f_n: (X_n,d_n,m_n) \to L^p(Z,\nu)\}_{n\in \N}$ is a measured weak embedding, then all maps $f_n$ can be chosen to be $L$-Lipschitz for some $L>0$ by Remark~\ref{graph mwe}. Integrating the inequality (\ref{EQ8}) over $(Z,\nu)$, we obtain that
$$\sum_{u,v\in X_n} \|f_n(u)-f_n(v)\|^p \frac{m_n(u)m_n(v)}{m_n(X_n)} \leq \frac{L^p}{c} \sum_{u,v\in X_n:u\thicksim_{} v}(m_n(u) + m_n(v))\leq \frac{(1+s)KL^p}{sc} m_n(X_n).$$
As $\sum_{u\in X_n}\frac{m_n(u)}{m_n(X_n)}=1$, using the pigeonhole principle we deduce that there exists at least one $u\in X_n$ such that
$$\sum_{v\in X_n} \|f_n(u)-f_n(v)\|^p \frac{m_n(v)}{m_n(X_n)} \leq \frac{(1+s)KL^p}{sc}.$$

If we denote $M:=\frac{(1+s)KL^p}{sc}$, then the set $\{v\in X_n:\|f_n(u)-f_n(v)\|^p\leq 2 M\}$ has measure greater than $\frac{1}{2}m_n(X_n)$ for every $n\in \N$. Equivalently, we have
$$
\frac{m_n(f_n^{-1}(B(f_n(u),(2M)^{1/p})))}{m_n(X_n)}>\frac{1}{2}\text{\ for every $n\in \N$}.
$$
This contradicts with the assumption that $\{f_n\}_{n\in \N}$ is a measured weak embedding.
\end{proof}

By Lemma~\ref{lem:measured.weak.embedding} and Proposition~\ref{prop: non CE}, we obtain the following result which extends the $L^p$-case of \cite[Theorem 4.2]{structure}.
\begin{cor}\label{cor: non CE}
A coarse disjoint union of a sequence of ghostly measured asymptotic expanders with uniformly bounded geometry cannot coarsely embed into any $L^p$-space for $p \in [1,\infty)$. 
\end{cor}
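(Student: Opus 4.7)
The plan is to proceed by contradiction, reducing to Proposition~\ref{prop: non CE} through Lemma~\ref{lem:measured.weak.embedding}. Suppose $X = \bigsqcup_{n\in\N}(X_n,d_n)$ is a coarse disjoint union of a ghostly sequence of measured asymptotic expanders $\{(X_n,d_n,m_n)\}_{n\in\N}$ with uniformly bounded geometry, and assume for contradiction that there is a coarse embedding $f \colon X \to L^p(Z,\nu)$ for some $p \in [1,\infty)$.

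Since $f$ admits non-decreasing unbounded control functions $\rho_\pm$ valid on all of $X$, the restrictions $f_n := f|_{X_n}$ are coarse embeddings of $(X_n,d_n)$ into $L^p(Z,\nu)$ sharing the \emph{same} pair $\rho_\pm$. By the definition in Section~\ref{Metric embeddings}, this is exactly what it means for $\{f_n\}_{n\in\N}$ to be a coarse embedding of the sequence $\{(X_n,d_n)\}$ into $L^p(Z,\nu)$. No extra work is needed to manufacture uniform control functions, which is the only thing one might worry about in passing from a single map $f$ on the disjoint union to a sequence of maps on the pieces.

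At this point the two bullets of our hypothesis align precisely with the first bullet of Lemma~\ref{lem:measured.weak.embedding}: the sequence $\{(X_n,d_n,m_n)\}_{n\in\N}$ is ghostly by Definition~\ref{DefiMainGeomProp}(2), and $\{(X_n,d_n)\}_{n\in\N}$ has uniformly bounded geometry. Invoking that lemma promotes $\{f_n\}_{n\in\N}$ to a measured weak embedding of $\{(X_n,d_n,m_n)\}$ into $L^p(Z,\nu)$. This, however, directly violates Proposition~\ref{prop: non CE}, which rules out any measured weak embedding of a sequence of measured asymptotic expanders with uniformly bounded geometry into an $L^p$-space. The contradiction shows that $f$ cannot exist.

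There is no real obstacle in this proof: both the quantitative $L^p$-Poincar\'{e} input and the conversion mechanism from coarse embeddings to measured weak embeddings have already been installed in the preceding material, and the corollary is a verification that the hypotheses of the two cited results are satisfied verbatim. The mildly non-trivial conceptual point — which is entirely absorbed into Proposition~\ref{prop: non CE} via Corollary~\ref{cor:MAE to ME} and Corollary~\ref{cor:Poincare for measured expanders} — is that the measured $L^p$-Poincar\'{e} inequality applies uniformly once one passes from asymptotic expanders to genuine expanders with bounded measure ratios, so the argument does not require any further structural input here.
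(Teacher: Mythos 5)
Your proposal is correct and follows exactly the paper's own (one-line) proof: restrict the putative coarse embedding to the pieces to get a coarse embedding of the sequence, apply the first bullet of Lemma~\ref{lem:measured.weak.embedding} to upgrade it to a measured weak embedding, and contradict Proposition~\ref{prop: non CE}. No discrepancies to report.
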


As a consequence of Theorem~\ref{thm:rigidity.geometric.condition}, rigidity holds for metric spaces which coarsely embed into some $L^p$-space for $p \in [1,\infty)$ as stated in Corollary \ref{introcor:lp}.

We finish this section by extending the main results in \cite{AT15, delabie2018box} to the measured asymptotic case. More precisely, we show that there exist box spaces that do not coarsely embed into any $L^p$-space, but do not measured weakly contain any measured asymptotic expanders. Consequently, we obtain that the rigidity problem holds for such box spaces by Lemma~\ref{lem:measured.weak.embedding} and Theorem~\ref{thm:rigidity.geometric.condition}.

To this end, we need the following key ingredient, which is a measured asymptotic analogue of \cite[Proposition 2]{AT15}:
\begin{prop}\label{prop:AT15 analogue}
Let $\{G_n\}_{n\in \N}$ be a sequence of finitely generated groups with generating sets $\{S_n\}_{n\in \N}$ such that $|S_n| \leq k$ for all $n\in \N$. We assume that for each $n\in \N$, we have a short exact sequence
\[
1 \rightarrow N_n \rightarrow G_n \rightarrow Q_n \rightarrow 1
\]
such that:
\begin{itemize}
  \item The sequence $(N_n)_{n\in \N}$ equipped with the induced metric coarsely embeds into a Hilbert space; 
  \item The sequence $(Q_n)_{n\in \N}$ equipped with the word metric associated to the projection $T_n$ of $S_n$ coarsely embeds into a Hilbert space.
\end{itemize}
Then the coarse disjoint union $G:=\bigsqcup_{n\in\N} G_n$ does not measured weakly contain any measured asymptotic expanders with uniformly bounded geometry. 
\end{prop}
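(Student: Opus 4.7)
The plan is to adapt the dichotomy argument of \cite[Proposition 2]{AT15} to the measured setting, using Proposition~\ref{prop: non CE} (measured asymptotic expanders with uniformly bounded geometry do not measured-weakly embed into $L^p$-spaces) as the non-embeddability input. We argue by contradiction.

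Assume that $G$ measured weakly contains a sequence of measured asymptotic expanders with uniformly bounded geometry. By Corollary~\ref{cor:measured weak embedding}, we may instead work with a sequence of measured expanders $\{(V_m, E_m, \mu_m)\}_{m \in \N}$ of uniformly bounded valency, measured weakly embedded via $L$-Lipschitz maps $f_m \colon V_m \to G$ (Remark~\ref{graph mwe}). Using connectedness of $V_m$ together with Lemma~\ref{condition the preimage}, a standard argument shows that, after passing to a subsequence, $f_m(V_m)$ is contained in a single piece $G_{n(m)}$ with $n(m) \to \infty$. Since $\pi_{n(m)}(S_{n(m)}) = T_{n(m)}$, the projection $\pi_m := \pi_{n(m)} \colon G_{n(m)} \twoheadrightarrow Q_{n(m)}$ is $1$-Lipschitz, so $\pi_m \circ f_m \colon V_m \to Q_{n(m)}$ is $L$-Lipschitz.

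We now split into two cases. In \emph{Case A}, for every $R > 0$, $\sup_{q \in Q_{n(m)}} \mu_m((\pi_m \circ f_m)^{-1}(B(q, R)))/\mu_m(V_m) \to 0$ as $m \to \infty$; thus $\{\pi_m \circ f_m\}$ is a measured weak embedding of $\{(V_m, E_m, \mu_m)\}$ into $\bigsqcup_n Q_n$, and composing with the uniform coarse embedding $\{Q_n\} \to \mathcal{H}$ (using that any non-empty preimage of an $R$-ball in $\mathcal{H}$ lies in a $(\rho_-^Q)^{-1}(2R)$-ball in $Q_{n(m)}$) yields a measured weak embedding of measured expanders into $\mathcal{H}$, contradicting Proposition~\ref{prop: non CE}. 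In \emph{Case B}, the negation, there exist $R_0, \varepsilon_0 > 0$ and (along a subsequence) $q_m \in Q_{n(m)}$ with $\mu_m((\pi_m \circ f_m)^{-1}(B(q_m, R_0))) \geq \varepsilon_0 \mu_m(V_m)$. Since $|B_{Q_{n(m)}}(q_m, R_0)| \leq k^{R_0}$, pigeonhole produces $q_m' \in B(q_m, R_0)$ with $\mu_m(A_m) \geq (\varepsilon_0/k^{R_0}) \mu_m(V_m)$, where $A_m := (\pi_m \circ f_m)^{-1}(q_m')$. Choosing any $g_m \in \pi^{-1}(q_m')$, the translation $\tilde{f}_m(v) := g_m^{-1} f_m(v)$ sends $A_m$ into $N_{n(m)}$ (as $\pi(\tilde{f}_m(v)) = e$) and is $L$-Lipschitz by left-invariance of the word metric on $G_{n(m)}$. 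By Lemma~\ref{lem: uniform asymptotic expansion for subspaces}, $\{(A_m, d_{V_m}|_{A_m}, \mu_m|_{A_m})\}$ is a sequence of measured asymptotic expanders with uniformly bounded geometry, and the identification $B_{N_{n(m)}}(y, R) = B_{G_{n(m)}}(y, R) \cap N_{n(m)}$ in the induced metric, combined with the measured weak embedding property of $f_m$, transfers the preimage-vanishing property to $\tilde{f}_m$. Composing with the uniform coarse embedding $\{N_n\} \to \mathcal{H}$ then contradicts Proposition~\ref{prop: non CE} once more.

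The main obstacle lies in setting up the Case A/Case B dichotomy cleanly and, in Case B, reducing to a single fiber of $\pi_{n(m)}$; this reduction relies on the uniform bound $|B_{Q_n}(\cdot, R)| \leq k^R$ forced by $|S_n| \leq k$. Verifying that $\tilde{f}_m$ inherits both the Lipschitz property (from left-invariance of the word metric) and the measured-weak-embedding property (from the identification of balls inside the induced metric on $N_n$) is technical but routine.
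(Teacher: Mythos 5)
Your argument is correct, but it is organised quite differently from the paper's. The paper proves Proposition~\ref{prop:AT15 analogue} by running the double-concentration argument of \cite[Proposition~2]{AT15} verbatim, with Lemma~\ref{Step II} playing the role of \cite[Corollary~2.2]{AT15}: one first applies the concentration statement to the composition of $\pi_n\circ g_n$ with the coarse embedding of $Q_n$ to put at least half of the measure into a bounded ball of $Q_n$, pigeonholes (using $|S_n|\leq k$) down to a single fibre of $\pi_n$ carrying a definite proportion of measure, translates into $N_n$, and then applies Lemma~\ref{Step II} a \emph{second} time with parameter $\alpha<1$ --- this is exactly where the Lipschitz-extension trick inside that lemma is needed --- to concentrate a definite proportion of measure at a single point of $G_n$, contradicting the measured weak embedding directly. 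You instead set up a dichotomy and invoke Proposition~\ref{prop: non CE} as a black box twice: your Case~A (the quotient map is itself a measured weak embedding) is in fact vacuous, since it immediately contradicts Proposition~\ref{prop: non CE}; the real content is Case~B, where you quantify the failure, pigeonhole to a fibre $A_m$ of definite relative measure, use Lemma~\ref{lem: uniform asymptotic expansion for subspaces} to see that the restricted sequence is still a sequence of measured asymptotic expanders of uniformly bounded geometry, and observe that dividing by $\mu_m(A_m)$ rather than $\mu_m(V_m)$ only costs the fixed factor $\varepsilon_0/C(k,R_0)$, so the translated restriction is a measured weak embedding into $(N_n)$ and hence into Hilbert space. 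The trade-off is clear: your route is more modular, replacing the $\alpha<1$ case of Lemma~\ref{Step II} (and its extension of Lipschitz maps from large subsets) by Lemma~\ref{lem: uniform asymptotic expansion for subspaces} together with a second pass through the structure theorem hidden inside Proposition~\ref{prop: non CE}, at the cost of passing to subsequences and obtaining a weaker quantitative conclusion; the paper's route yields the sharper unconditional statement that half of the measure always concentrates in a ball of radius depending only on the expansion data. Both are complete proofs of the proposition.
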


Before we prove Proposition~\ref{prop:AT15 analogue}, we need the following lemma, which is the measured analogue of \cite[Lemma~2.1 and Corollary~2.2]{AT15}:

\begin{lem}\label{Step II}
Let $p\in [1,\infty)$, $\alpha\in (0,1]$ and $c>0$. If $\{(V_n,E_n,m_n)\}_{n\in \N}$ is a sequence of $c$-measured expander graphs with valency uniformly bounded by $K\geq 1$ such that all $m_n$ have full support with uniformly bounded measure ratio $s\in (0,1)$ on adjacent vertices, then there exists $D>0$ depending only on $p,\alpha,c,K$ and $s$ such that for any $n\in \N$, any $A\subseteq V_n$ with $m_n(A) \geq \alpha \cdot m_n(V_n)$ and any $1$-Lipschitz map $f$ from $A$ to an $L^p$-space, we have
\begin{equation}\label{EQ12}
\sum_{u,v\in A} \|f(u)-f(v)\|^p \frac{m_n(u)m_n(v)}{m_n(A)^2} \leq D.
\end{equation}
In particular, there exists $x_0 \in A$ such that the set $\{v\in A:\|f(x_0)-f(v)\|\leq (2D)^{\frac{1}{p}}\}$ has measure greater than $m_n(A)/2$ for every $n\in \N$.
\end{lem}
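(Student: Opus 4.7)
The plan is to extend $f$ from $A$ to all of $V_n$ via a nearest-point projection, apply the $L^p$-Poincar\'e inequality of Corollary~\ref{cor:Poincare for measured expanders} componentwise to the extension, and use the expander property of $(V_n,E_n,m_n)$ to control the additional terms introduced by the extension. First I would fix, for each $v\in V_n$, a nearest point $\pi(v)\in A$ in the edge-path metric on $V_n$ (with $\pi(v)=v$ for $v\in A$), and define $\widetilde f:V_n\to L^p$ by $\widetilde f(v):=f(\pi(v))$. For any edge $u\sim v$ in $V_n$, the $1$-Lipschitz property of $f$ and the triangle inequality give
\[
\|\widetilde f(u)-\widetilde f(v)\|\le d(\pi(u),\pi(v))\le d(u,A)+1+d(v,A),
\]
and therefore $\|\widetilde f(u)-\widetilde f(v)\|^p \le 3^{p-1}\bigl(d(u,A)^p+1+d(v,A)^p\bigr)$.

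The crucial step is to establish geometric decay of $m_n(D_k)$, where $D_k:=\{v\in V_n : d(v,A)\ge k\}$. Since $m_n(D_1)\le (1-\alpha)m_n(V_n)$, I would apply Lemma~\ref{lem:annoying 1/2 bdd expander} with $\beta=1-\alpha$ (the case $\alpha=1$ being trivial) to obtain a constant $c_\alpha'>0$, depending only on $c$ and $\alpha$, with $m_n(\partial^{V_n}D_k)\ge c_\alpha' m_n(D_k)$ whenever $D_k\neq\emptyset$. Since $\partial^{V_n}D_k\subseteq D_{k-1}\setminus D_k$, this yields $m_n(D_k)\le q\cdot m_n(D_{k-1})$ with $q:=(1+c_\alpha')^{-1}\in(0,1)$, and hence by induction $m_n(D_k)\le (1-\alpha)q^{k-1}m_n(V_n)$. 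Consequently $\sum_{u\in V_n}d(u,A)^p m_n(u)\le (1-\alpha)m_n(V_n)\sum_{k\ge 1}k^p q^{k-1}$, a finite quantity depending only on $p$, $c$, $\alpha$.

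Combining this with the uniform valency bound $K$ and the ratio condition $m_n(v)\le m_n(u)/s$ along edges, a direct estimation produces a constant $C_1=C_1(p,\alpha,c,s,K)$ with
\[
\sum_{u\sim v}\|\widetilde f(u)-\widetilde f(v)\|^p(m_n(u)+m_n(v)) \le C_1\cdot m_n(V_n).
\]
Applying the Poincar\'e inequality of Corollary~\ref{cor:Poincare for measured expanders} coordinate-wise to $\widetilde f$ (i.e., to $v\mapsto \widetilde f(v)(z)$ for each $z$ in the underlying measure space of the $L^p$-target, then integrating by Fubini), and restricting the resulting right-hand side to pairs $u,v\in A$ where $\widetilde f$ agrees with $f$, I get
\[
\sum_{u,v\in A}\|f(u)-f(v)\|^p m_n(u)m_n(v) \le \frac{C_1}{c'}\,m_n(V_n)^2,
\]
where $c'>0$ is the Poincar\'e constant. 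Dividing both sides by $m_n(A)^2\ge \alpha^2 m_n(V_n)^2$ yields the desired inequality~(\ref{EQ12}) with $D:=C_1/(c'\alpha^2)$. The ``in particular'' assertion is then routine: the pigeonhole principle applied to~(\ref{EQ12}) produces some $x_0\in A$ with $\sum_{v\in A}\|f(x_0)-f(v)\|^p m_n(v)/m_n(A)\le D$, and Markov's inequality bounds the measure of the exceptional set $\{v\in A:\|f(x_0)-f(v)\|^p>2D\}$ by $m_n(A)/2$.

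The main obstacle is the geometric decay of $m_n(D_k)$, since the expander inequality in its basic form only controls sets of measure at most $m_n(V_n)/2$; Lemma~\ref{lem:annoying 1/2 bdd expander} is essential here because it recovers expansion for subsets of any fixed density bounded away from $1$, with constants depending only on $c$ and the threshold, which is exactly what we need in order for $q$ and hence $D$ to depend only on the prescribed parameters.
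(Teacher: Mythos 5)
Your proposal is correct and follows essentially the same route as the paper's proof: a nearest-point extension of $f$ to all of $V_n$, geometric decay of the measures of the sets at distance $\geq k$ from $A$ obtained from Lemma~\ref{lem:annoying 1/2 bdd expander} applied with threshold $1-\alpha$, a resulting bound on the edge-energy of the extension in terms of $p,\alpha,c,K,s$, and then the $L^p$-Poincar\'e inequality of Corollary~\ref{cor:Poincare for measured expanders} (integrated over the target measure space) restricted to pairs in $A$ and divided by $m_n(A)^2\geq\alpha^2 m_n(V_n)^2$. The only differences are notational (your sets $D_k$ versus the paper's complements of neighbourhoods $U_i$, and organising the edge-energy estimate by vertices rather than by annuli), so there is nothing substantive to add.
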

\begin{proof}
We will follow the same scheme as the proof of \cite[Lemma~2.1]{AT15} with minor modifications. There are two cases: $\alpha=1$ and $\alpha\in (0,1)$.

If $\alpha=1$, then $A=V_n$ because $m_n$ has full support. Let $f$ be any $1$-Lipschitz map from $V_n$ to an $L^p$-space. By Corollary~5.3, there exists a positive constant $c'$ only depending on $c,s,p, K$ such that
$$
\sum_{u,v\in V_n} \|f(u)-f(v)\|^p \frac{m_n(u)m_n(v)}{m_n(V_n)^2} \leq \frac{1}{c'\cdot m_n(V_n)} \sum_{u,v\in V_n:u\thicksim_{E_n} v}(m_n(u) + m_n(v))\leq \frac{(1+s)K}{sc'}.
$$
In this case, we simply choose $D:=\frac{(1+s)K}{sc'}$ so that (\ref{EQ12}) holds.

If $\alpha\in (0,1)$, then $1-\alpha\in (0,1)$. By Lemma~\ref{lem:annoying 1/2 bdd expander}, there exists a $c_{1-\alpha}>0$ depending only on $c$ and $\alpha$ such that
\[
h^n_\alpha:=\min\big\{\frac{m_n(\partial^{V_n} B)}{m_n(B)}: B\subseteq V_n \text{ with } 0<m_n(B) \leq (1-\alpha)\cdot m_n(V_n)\big\}>c_{1-\alpha}
\]
for all $n\in \N$.

Given $A\subseteq V_n$ with $m_n(A) \geq \alpha \cdot m_n(V_n)$. For every $i\in \N$, set $U_i:= \Nd_i(A)^c$ and $W_i:= U_i \setminus U_{i+1}$. As $(\partial^{V_n} U_{i+1}) \sqcup U_{i+1} \subseteq U_i$, and $m_n(U_{i+1})\leq (1-\alpha)\cdot m_n(V_n)$ for every $i\in \N$, it follows that
\[
m_n(U_{i}) \geq m_n(\partial^{V_n} U_{i+1}) + m_n(U_{i+1}) \geq (1+h^n_\alpha)\cdot m_n(U_{i+1}).
\]
Thus, for every $i\in \N$
\[
m_n(U_{i+1}) \leq \frac{m_n(U_1)}{(1+h^n_\alpha)^i} < \frac{m_n(V_n)}{(1+c_{1-\alpha})^i}.
\]

Now we extend $f$ from $A$ to all of $X$ as follows: for any $x\in A^c$, we choose a point $a_x\in A$ such that $d_n(x,A)=d_n(x,a_x)$ and we set $f(x):=f(a_x)$. If $x,y\in A^c$ with $x \thicksim_{E_n} y$, then $d_n(a_x,a_y)\leq d_n(x,A)+d_n(y,A)+1$. Thus, we deduce that $d_n(a_x,a_y) \leq 2i+4$ for every $x\in W_i$ and every $y \in V_n$ with $x \thicksim_{E_n} y$. As $f$ is $1$-Lipschitz on $A$, then $\|f(x)-f(y)\|\leq 2i+4$ for every $x\in W_i$ with $x \thicksim_{E_n} y$. As $A^c=\bigsqcup_{i\in \N}W_i$, we have that
\begin{align*}
&\sum_{u\in A^c}\sum_{v\in V_n:\ u\thicksim_{E_n} v} \| f(u) - f(v)\|^p (m_n(u) + m_n(v))\\
&\leq \sum_{i\in \N}\sum_{u\in W_i}\sum_{v\in V_n:\ u\thicksim_{E_n} v} (2i+4)^p (m_n(u) + m_n(v))\\
&\leq  \sum_{i\in \N} (2i+4)^p \frac{(1+s)K}{s} m_n(W_i)\\
&<\frac{(1+s)K}{s} m_n(V_n) \sum_{i\in \N}\frac{(2i+4)^p}{(1+c_{1-\alpha})^{i-1}},
\end{align*}
where $\sum_{i\in \N}\frac{(2i+4)^p}{(1+c_{1-\alpha})^{i-1}}$ is clearly convergent and its limit denotes by $\theta(p,c,\alpha)$. Moreover, we have $\|f(x)-f(y)\|\leq 2$ if $x\in A$ and $x \thicksim_{E_n} y$. Hence, it follows that
\begin{align*}
\sum_{u\in A}\sum_{v\in V_n:\ u\thicksim_{E_n} v} \| f(u) - f(v)\|^p (m_n(u) + m_n(v))\leq 2^p\frac{(1+s)K}{s}m_n(V_n).
\end{align*}
Combining them together, we conclude that
\begin{align*} 
&\sum_{u,v\in V_n:u\thicksim_{E_n} v} \| f(u) - f(v)\|^p (m_n(u) + m_n(v))\\
&<2^p\frac{(1+s)K}{s}m_n(V_n)+\frac{(1+s)K}{s}\cdot\theta(p,c,\alpha)\cdot m_n(V_n)\\
&=\frac{(1+s)K}{s}(2^p+\theta(p,c,\alpha))\cdot m_n(V_n).
\end{align*}

Now Corollary~\ref{cor:Poincare for measured expanders} provides us a positive constant $c'$ depending only on $p, K, s$ and $c$ so that 
\begin{align*}
&\sum_{u,v\in V_n} \|f(u)-f(v)\|^p \frac{m_n(u)m_n(v)}{m_n(V_n)^2} \\
&\leq \frac{1}{c'\cdot m_n(V_n)} \sum_{u,v\in V_n:u\thicksim_{E_n} v}\|f(u)-f(v)\|^p(m_n(u) + m_n(v))\\
& < \frac{(1+s)K}{c's}(2^p+\theta(p,c,\alpha)).
\end{align*}
As $m_n(A) \geq \alpha \cdot m_n(V_n)$, we conclude that (\ref{EQ12}) holds for $D:=\frac{(1+s)K}{\alpha^2c's}  \big(2^p+\theta(p, c, \alpha)\big)$.
As for the last statement, we simply apply the pigeonhole principle to (\ref{EQ12}) as in the proof of Proposition~\ref{prop: non CE}.
\end{proof}

\begin{proof}[Proof of Proposition~\ref{prop:AT15 analogue}]
We assume that there is a sequence of measured asymptotic expanders with uniformly bounded geometry which could be measured weakly embedded into $G$. From Corollary~\ref{cor:MAE to ME}, we obtain a sequence of $c$-measured expander graphs $\{(V_n,E_n,m_n)\}_{n\in \N}$ with valency uniformly bounded by $K\geq 1$ such that all $m_n$ have full support with uniformly bounded measure ratio $s\in (0,1)$ on adjacent vertices such that the sequence $\{(V_n,E_n,m_n)\}_{n\in \N}$ measured weakly embeds into $G$. If $\{g_n: (V_n,E_n,m_n) \to G\}_{n\in \N}$ is such a measured weak embedding, then for every $y\in G$ the preimages $g_n^{-1}(y)\neq \emptyset$ for only finitely many $n$'s by Lemma~\ref{condition the preimage}. Since all $(V_n, E_n)$ are connected graphs and all $g_n$ are $M$-Lipschitz for some $M>0$, we may remove finitely many $V_n$'s if necessary so that every $V_n$ in the remaining sequence is mapped into a single piece $G_{k_n}$ for some $k_n\in \N$. Using Lemma~\ref{condition the preimage} again, we may (after taking a subsequence if necessary) assume that each $g_n$ maps $V_n$ into $G_n$ for all $n\in \N$.

We reach a contradiction with the fact that $\{g_n\}_{n\in \N}$ is a measured weak embedding by following the proof of \cite[Proposition~2]{AT15} step by step, except we use Lemma~\ref{Step II} instead of \cite[Corollary~2.2]{AT15}, and replace the counting measure by $m_n$.
\end{proof}

Now we are able to strengthen \cite[Theorem~1]{AT15} as follows:
\begin{thm}\label{AT example}
There exists a box space $X$ of a finitely generated residually finite group such that $X$ does not coarsely embed into any $L^p$-space for $1\leq p<\infty$\footnote{The box space $X$ also does not coarsely embed into any \emph{uniformly curved} Banach space (see \cite[Definition~6.1]{AT15} for the definition).}, yet does not measured weakly contain any measured asymptotic expanders with uniformly bounded geometry.
\end{thm}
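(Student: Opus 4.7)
The plan is to reuse the explicit construction of Arzhantseva and Tessera in \cite{AT15}, whose output already witnesses non-embeddability into any $L^p$-space, and then verify via Proposition~\ref{prop:AT15 analogue} that this same box space cannot measured weakly contain any measured asymptotic expanders with uniformly bounded geometry. The point is that we do not need to invent a new example: the AT construction was specifically engineered so that each finite quotient fits into a short exact sequence with Hilbert-embeddable kernel and Hilbert-embeddable quotient (in the metrics from \cite[Proposition~2]{AT15}), which is precisely the hypothesis of Proposition~\ref{prop:AT15 analogue}.

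More concretely, first I would recall from \cite[Theorem~1]{AT15} a finitely generated residually finite group $\Gamma$ with a filtration by finite-index normal subgroups $\{H_n\}_{n\in \N}$ with trivial intersection, such that the box space $X=\bigsqcup_{n\in \N}\Gamma/H_n$ (equipped with the standard coarse disjoint union metric) does not coarsely embed into any $L^p$-space for $p\in [1,\infty)$. The key feature of the AT construction is that each finite quotient $G_n:=\Gamma/H_n$ carries a canonical short exact sequence
\[
1\longrightarrow N_n\longrightarrow G_n \longrightarrow Q_n \longrightarrow 1
\]
such that $(N_n)_{n\in \N}$ with the induced metric embeds coarsely into a Hilbert space and $(Q_n)_{n\in \N}$ with the word metric coming from the projected generators embeds coarsely into a Hilbert space. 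The generating sets have cardinality uniformly bounded by $|S|$ where $S$ is a fixed finite generating set of $\Gamma$.

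With this setup in hand, I would then invoke Proposition~\ref{prop:AT15 analogue} with this sequence of groups and generating sets: it immediately yields that the coarse disjoint union $X=\bigsqcup_{n\in \N} G_n$ does not measured weakly contain any measured asymptotic expanders with uniformly bounded geometry. Since $X$ is exactly the AT box space, the two required properties hold simultaneously.

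The substantive work has been done upstream, in Proposition~\ref{prop:AT15 analogue}, which upgrades \cite[Proposition~2]{AT15} from the weak-embedding/expander setting to the measured-weak-embedding/measured-asymptotic-expander setting, relying on the structure theorem Corollary~\ref{cor:MAE to ME} and the $L^p$-Poincar\'e inequality Corollary~\ref{cor:Poincare for measured expanders}. The only potential obstacle here is bookkeeping: one must check that the AT data literally matches the hypotheses of Proposition~\ref{prop:AT15 analogue} (uniform bound on $|S_n|$, correct choice of metrics on $N_n$ and $Q_n$, and coarse embeddability of the two sequences into Hilbert space). This verification is identical to the one already performed in \cite{AT15}, so no new argument is needed. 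An entirely parallel plan, substituting the box space of $F_3$ constructed by Delabie and Khukhro \cite{delabie2018box} in place of the AT group, yields the second assertion in Theorem~\ref{examples}.
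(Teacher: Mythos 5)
Your proposal matches the paper's proof: the paper likewise takes the Arzhantseva--Tessera box space unchanged (so the $L^p$ non-embeddability is inherited from \cite{AT15}) and substitutes Proposition~\ref{prop:AT15 analogue} for \cite[Proposition~2]{AT15} at the final step to rule out measured weak containment of measured asymptotic expanders. The argument is correct and essentially identical to the one in the paper.
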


\begin{proof}
The proof is identical to the proof of \cite[Theorem~7.1]{AT15} except that we use Proposition~\ref{prop:AT15 analogue} instead of \cite[Proposition~2]{AT15} at the very end of the proof. For this reason, we shall not repeat the argument.
\end{proof}

In fact, the graphs constituting the box space $X$ in Theorem~\ref{AT example} can be additionally required to have unbounded  \emph{girth} (\emph{i.e.}, the length of the shortest cycle). For this, we need the following stronger version of \cite[Proposition 2.4]{delabie2018box}, whose proof is the same except that we use Proposition~\ref{prop:AT15 analogue} instead of \cite[Proposition 2]{AT15} therein. To avoid too much word repetition, we omit its proof here.

\begin{cor}\label{new delabbie-Khukhro}
Let $G$ be a finitely generated, residually finite group and let $\{N_n\}_{n\in \N}$ be a sequence of nested finite index normal subgroups of $G$ with trivial intersection. If $\{M_n\}_{n\in \N}$ is another sequence of finite index normal subgroups of $G$ with $N_n > M_n$ for all $n\in \N$ and the box space $\bigsqcup_{n\in \N} G/N_n$ coarsely embeds into a Hilbert space, then the box space $\bigsqcup_{n\in \N} G/M_n$ does not measured weakly contain any measured asymptotic expanders with uniformly bounded geometry.
\end{cor}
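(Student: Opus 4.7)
The plan is to apply Proposition~\ref{prop:AT15 analogue} directly to the sequence $\{G/M_n\}_{n\in \N}$, following the strategy of \cite[Proposition~2.4]{delabie2018box} essentially verbatim with the single substitution suggested by the authors. Fix a finite generating set $S$ of $G$, and let $S_n$ and $T_n$ denote the images of $S$ in $G/M_n$ and $G/N_n$ respectively. For each $n$, the natural short exact sequence
\[
1 \longrightarrow N_n/M_n \longrightarrow G/M_n \longrightarrow G/N_n \longrightarrow 1
\]
puts $G/M_n$ in the form required by Proposition~\ref{prop:AT15 analogue}, with $N_n/M_n$ and $G/N_n$ playing the roles of the kernel and the quotient respectively. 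The generating sets $S_n$ all have cardinality at most $|S|$, so the uniform bound on the size of generating sets is automatic.

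The next step is to verify the two coarse embeddability hypotheses of Proposition~\ref{prop:AT15 analogue}. The hypothesis on the quotients $(G/N_n, d_{T_n})$ coincides exactly with our standing assumption that the box space $\bigsqcup_n G/N_n$ coarsely embeds into a Hilbert space. For the kernels $(N_n/M_n)$ equipped with the restriction of the word metric $d_{S_n}$ from $G/M_n$, I would invoke the construction carried out in \cite{delabie2018q}: using that $\{N_n\}_{n\in \N}$ is a nested sequence of finite-index normal subgroups with trivial intersection, one builds a uniform coarse embedding of these induced-metric kernels into a Hilbert space. This is precisely the technical input that makes the Delabie--Khukhro argument work, and it is independent of whether the eventual conclusion is phrased in terms of classical expanders or measured asymptotic expanders.

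Once both hypotheses are in place, Proposition~\ref{prop:AT15 analogue} yields the conclusion immediately: the coarse disjoint union $\bigsqcup_n G/M_n$ does not measured weakly contain any sequence of measured asymptotic expanders with uniformly bounded geometry.

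The main obstacle, as expected, is the verification that the induced-metric kernels $N_n/M_n$ coarsely embed into Hilbert space uniformly in $n$; this is the technical heart of \cite[Proposition~2.4]{delabie2018box} and requires no modification in our setting, because the Hilbert-space coarse embedding hypotheses are used only as inputs to Proposition~\ref{prop:AT15 analogue}. Thus the proof reduces to observing that the construction in \cite{delabie2018box} produces exactly the hypotheses of Proposition~\ref{prop:AT15 analogue}, and then invoking that proposition.
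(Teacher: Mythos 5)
Your proposal is correct and follows exactly the route the paper takes: the paper's (omitted) proof is precisely the Delabie--Khukhro argument for their Proposition~2.4 applied to the extensions $1 \to N_n/M_n \to G/M_n \to G/N_n \to 1$, with the sole change of invoking Proposition~\ref{prop:AT15 analogue} in place of Arzhantseva--Tessera's Proposition~2 at the final step. The only blemish is the stray citation key for the kernel-embeddability input (it should point to the Delabie--Khukhro box-space paper already cited as \cite{delabie2018box}); mathematically the argument matches the paper's.
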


Now we are ready to generalise \cite[Theorem~4.5]{delabie2018box} in the following way:
\begin{thm}\label{box space of F3}
There exists a box space $X$ of the free group $F_3$ such that $X$ does not coarsely embed into any $L^p$-space for $1\leq p<\infty$, yet does not measured weakly contain any measured asymptotic expanders with uniformly bounded geometry.
\end{thm}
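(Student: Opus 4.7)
The plan is to follow exactly the construction of \cite[Theorem~4.5]{delabie2018box}, substituting our strengthened Corollary~\ref{new delabbie-Khukhro} for their \cite[Proposition~2.4]{delabie2018box} at the decisive step. Since $F_3$ is residually finite (indeed, any free group is), we can meaningfully produce box spaces from nested sequences of finite-index normal subgroups with trivial intersection, and this is the framework Delabie--Khukhro exploit.

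Concretely, I would proceed as follows. First, invoke the construction in \cite[Section~4]{delabie2018box} to obtain two sequences of finite-index normal subgroups of $F_3$: a ``controlling'' sequence $\{N_n\}_{n\in \N}$ which is nested with $\bigcap_n N_n = \{e\}$ and such that the box space $\bigsqcup_{n} F_3/N_n$ coarsely embeds into a Hilbert space, together with a ``target'' sequence $\{M_n\}_{n\in\N}$ satisfying $N_n \triangleright M_n$ for every $n$, and such that the box space $X := \bigsqcup_{n} F_3/M_n$ does \emph{not} coarsely embed into any $L^p$-space for $p \in [1,\infty)$. The key feature of this construction, already present in \cite{delabie2018box}, is that the quotients $F_3 / M_n$ have girths tending to infinity, so $X$ has unbounded girth as desired. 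Granting (i) and (ii), Corollary~\ref{new delabbie-Khukhro} applies verbatim to the pair $(\{N_n\},\{M_n\})$ and yields that $X$ does not measured weakly contain any measured asymptotic expanders with uniformly bounded geometry.

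The main obstacle, which is hidden inside the cited Delabie--Khukhro machinery rather than in our own argument, is precisely the simultaneous construction of the two towers $\{N_n\}$ and $\{M_n\}$ so that the larger quotients $F_3/N_n$ behave ``nicely'' (Hilbert-embeddable box space) while the finer quotients $F_3/M_n$ already obstruct $L^p$-embeddability. Once this is in place, our contribution is entirely contained in Corollary~\ref{new delabbie-Khukhro}, which upgrades the conclusion of \cite[Proposition~2.4]{delabie2018box} from ``no weak containment of expanders'' to ``no measured weak containment of measured asymptotic expanders''; this upgrade in turn rests on the $L^p$-Poincar\'{e} inequality for measured expanders (Corollary~\ref{cor:Poincare for measured expanders}) and the structure theorem (Corollary~\ref{cor:MAE to ME}) developed in Sections~\ref{sec:MAE} and~\ref{subsection: avgeraging projection of measured expanders}.

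Since this final step is a word-for-word repetition of the argument in \cite[Theorem~4.5]{delabie2018box} with one citation replaced, I would simply state this and refer the reader to \emph{loc.~cit.} rather than reproducing the construction in detail, mirroring the style already adopted in the proof of Theorem~\ref{AT example}.
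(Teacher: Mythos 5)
Your proposal matches the paper's proof essentially verbatim: both simply rerun the Delabie--Khukhro construction for $F_3$ and swap their \cite[Proposition~2.4]{delabie2018box} for Corollary~\ref{new delabbie-Khukhro} at the decisive step. The one place where you are slightly too quick is the $L^p$ non-embeddability of $X=\bigsqcup_n F_3/M_n$: the remark in \cite[Section~5]{delabie2018box} only asserts non-embeddability into $\ell^p$, and the paper points out that the upgrade to arbitrary $L^p$-spaces, while straightforward, requires the complex interpolation argument as in the proof of \cite[Proposition~4]{AT15} rather than being contained in the cited construction itself.
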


\begin{proof}
We refer to the proof of \cite[Theorem~4.5]{delabie2018box} for details. In the proof, we have to apply our Corollary~\ref{new delabbie-Khukhro} instead of \cite[Proposition 2.4]{delabie2018box} herein in order to obtain the second part of the statement, while the first part follows from the last remark in \cite[Section~5]{delabie2018box}\footnote{The authors claimed in this remark that their box space does not coarsely embed into $\ell^p$, but it is straightforward to check that it actually cannot coarsely embed into any $L^p$-space for $1\leq p<\infty$ by the complex interpolation method as in the proof of \cite[Proposition~4]{AT15}).}.
\end{proof}

\bibliographystyle{plain}
\bibliography{expanderish_graph_measured}
\end{document}